\newcommand{\eps}{{\varepsilon}}
\newcommand{\R}{{\mathbb R}}
\newcommand{\Compl}{{\mathbb C}}
\newcommand{\X}{\overline{X}}
\newcommand{\les}{\lesssim}
\newcommand{\txt}{\textstyle}
\newcommand{\scr}{\scriptstyle}
\newcommand{\Lap}{\Delta}
\newcommand{\Res}{R_0^+}
\newcommand{\vp}{\varphi}
\newcommand{\la}{\langle}
\newcommand{\ra}{\rangle}
\newcommand{\1}{{\mathbf 1}}
\def\norm[#1][#2]{\|#1\|_{#2}}
\def\bignorm[#1][#2]{\big\|#1\big\|_{#2}}
\def\Bignorm[#1][#2]{\Big\|#1\Big\|_{#2}}
\def\japanese[#1]{\langle #1 \rangle}
\def\Im[#1]{{\rm Im}(#1)}
\def\Re[#1]{{\rm Re}(#1)}
\newtheorem{theorem}{Theorem}
\newtheorem{lemma}[theorem]{Lemma}
\newtheorem{corollary}[theorem]{Corollary}
\newtheorem{proposition}[theorem]{Proposition}
\theoremstyle{remark}
\newtheorem{remark}{Remark}
\begin{document}

\title[Dispersive Bound for Schr\"odinger Operators with Eigenvalues]{A dispersive bound for three-dimensional Schr\"odinger operators with zero energy
eigenvalues}
\date{September 20, 2008}

\author{Michael\ Goldberg}
\thanks{The author received partial support from NSF grant DMS-0600925 during
the preparation of this work.}
\address{Department of Mathematics, Johns Hopkins University,
3400 N. Charles St., Baltimore, MD 21218}
\email{mikeg@math.jhu.edu}

\begin{abstract}
We prove a dispersive estimate for the evolution of Schr\"odinger operators
$H = -\Delta + V(x)$ in ${\mathbb R}^3$.  The potential is allowed to be
a complex-valued function belonging to $L^p(\R^3)\cap L^q(\R^3)$,
$p < \frac32 < q$, so that $H$ need not be self-adjoint or even symmetric.
Some additional spectral conditions are imposed, namely that no resonances
of $H$ exist anywhere within the interval $[0,\infty)$ and that eigenfunctions
at zero (including generalized eigenfunctions) decay rapidly enough to be
integrable.  
\end{abstract}

\maketitle

\section{Introduction} \label{sec:intro}
The Schr\"odinger equation is representative of a larger class of
dispersive evolution equations, in which wave packets that are localized to
distinct regions of Fourier space propagate with different group velocity.
One consequence is that mass concentration tends to be transient in nature
as the 
frequency components of a solution eventually separate from one another.
Solutions of a dispersive equation are therefore expected to display 
an improvement in local regularity and/or a decrease in overall size over
long time periods.  We will concentrate here on evolution estimates that use
the $L^1(\R^3)$ norm of initial data to control the supremum of the solution
at later times.  As with any map from $L^1$ to $L^\infty$, this can
also be understood as a pointwise bound on the propagator kernel.

The free Schr\"odinger equation propagates forward to time $t$ through the
action of $e^{it\Delta}$ on the initial data. By standard Fourier inversion
identities, this is equivalent to convolution against
the complex Gaussian kernel $(-4\pi it)^{-3/2} e^{i(|x|^2/4t)}$. 
It immediately follows
that the free evolution satisfies a dispersive bound 
\begin{equation} \label{eq:freedispersive}
\norm[e^{it\Lap}f][\infty] \le (4\pi|t|)^{-3/2}\norm[f][1]
\end{equation}
 at all times $t \not= 0$.
In this paper we seek to prove similar estimates for the time evolution
$e^{-itH}$ induced by a perturbed Hamiltonian $H= -\Delta + V(x)$.
We assume that $V \in L^p(\R^3) \cap L^q(\R^3)$ for a pair of exponents
$p < \frac32 < q$, corresponding to homogeneous behavior on the order
of $|x|^{-2+\eps}$ for local singularities and $|x|^{-2-\eps}$
as $|x| \to \infty$.  There are no further restrictions on the size of $V$
or on its negative or imaginary parts.  In particular, there is no assurance
that $H$ is a symmetric or self-adjoint operator.

Eigenvalues of $H$ present an immediate obstruction to the validity of 
dispersive estimates.  If there exists a nonzero function $\Psi \in L^1(\R^3)$
that solves the eigenvalue equation $H\Psi = \lambda\Psi$, then the associated
Schr\"odinger evolution
\begin{equation*}
e^{-itH}\Psi = e^{-it\lambda}\Psi
\end{equation*}
certainly lacks the decay properties
of~\eqref{eq:freedispersive}.  Indeed, such a solution either fails to
decay at all (if $\lambda \in\R$) or experiences exponential growth according
to the imaginary part of $\lambda$.
Generalized eigenfunctions (i.e. solutions to $(H-\lambda)^k\Psi = 0$ for
some $k>1$) create a similar problem.  The series expansion of 
$e^{-itH}$ in powers of $t(H-\lambda)$ shows that the evolution of initial
data $\Psi$ must experience growth at a rate of $|e^{-it\lambda}|$ times a
degree $(k-1)$ polynomial in $t$.

These bound states can often be avoided by introducing the appropriate spectral
projection.  A revised dispersive estimate for $H = -\Delta + V$ might
take the form
\begin{equation} \label{eq:dispersive1}
\big\|e^{-itH}(I- {\txt \sum}_j P_{\lambda_j}(H))f\big\|_\infty 
  \les |t|^{-\frac32}\norm[f][1]
\end{equation}
where each $P_{\lambda_j}(H)$ is a projection onto the point spectrum of $H$
at the eigenvalue $\lambda_j \in \Compl$.  We use the term ``projection'' here
to indicate a bounded linear operator satisfying $P^2 = P$, though not
necessarily the canonical $L^2$-orthogonal projection.  In general the
correct choice for
$P_\lambda(H)$ will depend on the eigenfunctions of $H^*$ as well as those
of $H$.  Details of its construction are given in Section~\ref{sec:decomp}. 

One additional concern here is the possible existence of resonances, which
are solutions to the eigenvalue equation that do not decay rapidly enough 
to belong to $L^2(\R^3)$.  Resonances exhibit enough persistence behavior
(by virtue of their resemblance to $L^2$ bound states) to negate most
dispersive estimates, but they cannot be so easily removed with a 
spectral projection.

Our main theorem proves that~\eqref{eq:dispersive1} remains valid in the
presence of an eigenvalue at zero, so long as each of the eigenfunctions (and
generalized eigenfunctions) belongs to $L^1(\R^3)$.  Resonances must still
be forbidden.  To make a precise
statement we introduce a classification system for the eigenspace of $H$
lying over $\lambda = 0$.

Let $X_1$ represent the space of threshold eigenfunctions and resonances of
$H$, which are distributional solutions of $(-\Delta + V)\Psi = 0$ belonging
to $L^3_{\rm weak}(\R^3)$, the same class as the Green's function of
the Laplacian.  Among weighted $L^2$ spaces, this places $X_1 \subset 
\japanese[x]^{\sigma}L^2(\R^3)$ for each $\sigma > \frac12$.
Our assumptions on $V$ indicate that $VX_1$ is a subset of $L^1(\R^3)$.

Because $H$ is not a symmetric operator, its ``Jordan block'' structure at
each eigenvalue may not be limited to a direct sum of eigenfunctions. 
We would like to classify the deeper structure by recursively setting
$X_{k+1}$ to be the space of functions $\Psi$ such that $H\Psi \in X_k$, 
$k \ge 0$.  This heuristic is imprecise with regard to the acceptable
level of spatial growth/decay of $\Psi \in X_k$, so we introduce an inductive
set of assumptions in order to define $X_k$ more carefully.  The base case
assumption is
\begin{equation*}
X_1 \subset L^1(\R^3).
\end{equation*}
Assuming at each step that $X_k$ is a subspace of $L^1(\R^3)$, we are able to
define
\begin{equation*}  
X_{k+1} := \{\Psi \in L^3_{\rm weak}(\R^3): (-\Delta + V)\Psi \in X_k\}.
\end{equation*}

Now we are prepared to state the theorem.

\begin{theorem}
\label{thm:dispersive}
Let $V \in L^p(\R^3) \cap L^q(\R^3)$, $p < \frac32 < q$ be a complex-valued
potential.
Assume that $H = -\Delta + V$ has no resonances, and that the generalized
eigenspace at zero energy satisfies the the assumptions
\renewcommand{\theenumi}{(A\arabic{enumi})}
\renewcommand{\labelenumi}{\theenumi}
\begin{enumerate}
\item \label{A1}
$X_k \subset L^1(\R^3)$ for each $k \ge 1$.
\item \label{A2}
$X := \bigcup_k X_k$ is a finite-dimensional space.
\end{enumerate}
Then
\begin{equation} \label{eq:dispersive}
\big\|e^{-itH} (I - P_{pp}(H))f\big\|_{\infty} 
  \les |t|^{-\frac32}\norm[f][1]. 
\end{equation}
Under these conditions, the spectral multiplier $P_{pp}(H) := 
\sum_j P_{\lambda_j}(H)$ is a finite-rank projection.
\end{theorem}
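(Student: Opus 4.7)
I would start from the Dunford--Stone representation
\[
e^{-itH}(I - P_{pp}(H))f = \frac{1}{2\pi i}\int_0^\infty e^{-it\mu}\bigl[\Rplus(\mu) - \Rminus(\mu)\bigr]f\,d\mu,
\]
obtained by collapsing a contour enclosing the essential spectrum $[0,\infty)$ (while skirting each $\lambda_j$) onto the real axis. After the substitution $\mu = \lambda^2$, the problem reduces to estimating an oscillatory integral in $\lambda$ as a bounded operator from $L^1(\R^3)$ to $L^\infty(\R^3)$. Using the polar factorization $V = wv$ with $v = |V|^{1/2}$ and $w = U|V|^{1/2}$ (from $V = U|V|$), the symmetric resolvent identity
\[
\Rplus(\lambda^2) = \Res(\lambda^2) - \Res(\lambda^2)\, w\, [M^+(\lambda)]^{-1}\, v\, \Res(\lambda^2), \qquad M^+(\lambda) := I + v\,\Res(\lambda^2)\,w,
\]
and its $R^-$ counterpart reduce the whole problem to inverting the Birman--Schwinger-type operator $M^\pm(\lambda)$ on $L^2$ and controlling it in a suitable topology.

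A smooth cutoff $\chi(\lambda)$ near $\lambda = 0$ splits the integral into a high- and a low-energy piece. For the high-energy piece, $M^\pm(\lambda)^{-1}$ exists with uniformly bounded norm, since $\|v \Res(\lambda^2) w\|_{L^2 \to L^2} \to 0$ as $\lambda \to \infty$ under the assumption $V \in L^p(\R^3) \cap L^q(\R^3)$ with $p < \tfrac32 < q$. Repeated integration by parts in $\lambda$ then produces the $|t|^{-3/2}$ decay in the now-standard Goldberg--Schlag manner, with no need to revisit the threshold.

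The low-energy piece is the substantive part. At $\lambda = 0$ the operator $M^\pm(0) = I + v(-\Delta)^{-1}w$ is Fredholm of index zero with a non-trivial kernel, in finite-dimensional correspondence (by (A2)) with $X = \bigcup_k X_k$. A Grushin/Feshbach reduction onto this finite-dimensional subspace, combined with the analytic Fredholm theorem, produces a Laurent expansion
\[
M^\pm(\lambda)^{-1} = \sum_{j=-N}^{\infty} \lambda^j\, A_j^\pm,
\]
whose pole order $N$ is controlled by the length of the longest Jordan chain in $X$. The singular coefficients $A_j^\pm$ ($j < 0$) are finite-rank operators built from pairings against the generalized eigenfunctions of $H$ and $H^*$ at zero. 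Assumption (A1) enters here: it guarantees that these eigenfunctions lie in $L^1(\R^3)$ (and hence, by elliptic regularity, also in $L^\infty$), so that the induced finite-rank operators are bounded from $L^1$ to $L^\infty$. The remaining task is to show that, once multiplied by $e^{-it\lambda^2}\lambda$ and integrated over the low-energy window, the singular sum assembles precisely into $P_0(H)$ and cancels the zero-energy contribution to $P_{pp}(H)$, while the regular tail admits the same integration-by-parts argument as the high-energy piece and contributes $|t|^{-3/2}$. Finite-rank of $P_{pp}(H)$ follows from (A2) and a Birman--Schwinger bound ensuring that the non-zero eigenvalues of $H$ are finite in number with finite algebraic multiplicities.

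The main obstacle is matching the principal part of $M^\pm(\lambda)^{-1}$ with the Jordan-block decomposition of $P_0(H)$ \emph{in the $L^1 \to L^\infty$ topology}, rather than the $L^2$ topology used in the self-adjoint literature. When generalized eigenfunctions of depth $k > 1$ are present, poles of increasing order appear, and each must be identified term-by-term with the corresponding Riesz projection via biorthogonal bases on $X$ and on the analogous space for $H^*$. The hypothesis (A1) is exactly what keeps this book-keeping compatible with the pointwise kernel estimates demanded by the dispersive bound.
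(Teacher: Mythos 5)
Your plan follows the paper's overall architecture---Stone formula, high/low energy split, Laurent expansion at the threshold, kill the poles, integrate---but departs from it in three technical choices worth comparing. First, the paper never introduces the symmetric factorization $V = wv$: it works throughout with $T^+(\lambda) = (I + V\Res(\lambda^2))^{-1}$ as a family of Fredholm operators on $L^1(\R^3)$ (Propositions~\ref{prop:continuity} and~\ref{prop:compactness}), which keeps the entire analysis in the space where the $L^1\to L^\infty$ conclusion lives; your Birman--Schwinger operator $M^\pm(\lambda)$ on $L^2$ is a valid alternative, but it is not self-adjoint for complex $V$ so it buys less than usual, and it forces an extra translation step from $L^2$ mapping properties back to the $L^1\to L^\infty$ target. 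Second, and more substantively, the step you describe as ``the singular sum assembles precisely into $P_0(H)$ and cancels the zero-energy contribution'' cannot work literally as stated: the individual pole terms $\lambda^{-2j}$, $j\ge 1$, are not integrable against $e^{-it\lambda^2}\lambda\,\chi(\lambda)$, so they cannot be integrated one at a time and then cancelled after the fact. The paper instead shows (Lemma~\ref{lem:Inverse}) that every singular term of $T^+(\lambda)f$ carries a scalar factor $\la f, \bar\psi^{(\ell)}_{i,k}\ra$, and that these vanish \emph{identically in $\lambda$} as soon as $f$ lies in the range of $I - P_0(H)$, i.e.\ $f \perp \X$; the pole is gone from the integrand before any $\lambda$-integration takes place, so no residue/cancellation bookkeeping is needed. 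Third, the decay mechanism is not ``repeated integration by parts'': one formal integration by parts isolates $\frac{d}{d\lambda}\Res(\lambda^2)$, after which the paper passes to the Fourier side via Plancherel and uses the $|t|^{-1/2}$ bound on $\widehat{e^{-it\lambda^2}}$; the actual substance is the $L^1_\rho L^1_x$ estimate on $(T^\pm)\hat{\,}(\rho)$ stated as Theorem~\ref{thm:goal} and proved by local Neumann series in Sections~\ref{sec:highmedenergy}--\ref{sec:lowenergy}. Your closing remark about biorthogonal Jordan bases for $X$ and $\X$ correctly locates where the paper's effort is concentrated (Section~\ref{sec:decomp}, Lemma~\ref{lem:Xbasis}), and on that point your sketch aligns with the paper.
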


The propagator $e^{-itH}$ is not a unitary operator unless $V$ is real-valued,
and indeed the evolution of complex eigenvalues and/or
generalized eigenfunctions must be unbounded at large times. 
Once these exceptional
vectors are projected away, then a global in time $L^2$ bound can be
recovered.  The following statement is adapted from Theorem~1 in~\cite{Go08},
which is based on the endpoint
inhomogeneous Strichartz estimate for the Laplacian~\cite{KeTa98}.

\begin{theorem} \label{thm:L2bound}
Under the same assumptions on Theorem~\ref{thm:dispersive},
\begin{equation} \label{eq:L2bound}
\big\|e^{-itH} (I - P_{pp}(H))f\big\|_2
  \les \norm[f][2].
\end{equation}
\end{theorem}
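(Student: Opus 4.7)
The plan is to reduce \eqref{eq:L2bound} to the dispersive estimate \eqref{eq:dispersive} via Duhamel's formula combined with the endpoint inhomogeneous Strichartz estimate of Keel--Tao \cite{KeTa98}.  Set $g := (I - P_{pp}(H))f$; since $P_{pp}(H)$ is a finite-rank projection by Theorem~\ref{thm:dispersive}, one has $\|g\|_2 \lesssim \|f\|_2$.  Because $P_{pp}(H)$ commutes with $H$ and hence with $e^{-itH}$, the function $u(t) := e^{-itH}g$ coincides with $e^{-itH}(I-P_{pp}(H))f$ and satisfies
\[
u(t) = e^{it\Delta}g - i\int_0^t e^{i(t-s)\Delta}Vu(s)\, ds.
\]

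Interpolating the hypothesis $V\in L^p\cap L^q$ with $p<\tfrac32<q$ yields $V\in L^{3/2}(\R^3)$.  The Keel--Tao endpoint inhomogeneous Strichartz estimate for $-\Delta$ on $\R^3$ gives
\[
\Big\|\int_0^t e^{i(t-s)\Delta}F(s)\, ds\Big\|_{L^\infty_t L^2_x} \lesssim \|F\|_{L^2_t L^{6/5}_x},
\]
while the free evolution bound is $\|e^{it\Delta}g\|_{L^\infty_t L^2_x} = \|g\|_2$.  Hölder's inequality bounds $\|Vu(s)\|_{L^{6/5}_x}$ by $\|V\|_{L^{3/2}}\|u(s)\|_{L^6_x}$, so
\[
\|u\|_{L^\infty_t L^2_x} \lesssim \|f\|_2 + \|V\|_{L^{3/2}}\|u\|_{L^2_t L^6_x}.
\]

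It remains to show the homogeneous endpoint Strichartz estimate $\|u\|_{L^2_tL^6_x} \lesssim \|f\|_2$ for the perturbed propagator $e^{-itH}(I-P_{pp}(H))$.  I would derive this directly from \eqref{eq:dispersive} by a bilinear argument in the spirit of Keel--Tao, exploiting the pseudo-semigroup identity
\[
e^{-itH}(I-P_{pp}(H))\cdot e^{isH}(I-P_{pp}(H)) = e^{-i(t-s)H}(I-P_{pp}(H)),
\]
which inherits the sharp $|t-s|^{-3/2}$ dispersive decay from Theorem~\ref{thm:dispersive}.  Applying Hardy--Littlewood--Sobolev in the time variable to the resulting bilinear form then converts this $L^1\to L^\infty$ bound into the desired $L^2_tL^6_x$ estimate on $u$, closing the argument.

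The principal obstacle lies in this last step: since $H$ is not self-adjoint, $e^{isH}$ is not the adjoint of $e^{-isH}$, and the standard $U(t)U(s)^*$ factorization underpinning the Keel--Tao endpoint machinery is unavailable.  The bilinear form must be handled via the pseudo-semigroup identity above, and one must verify that the dispersive bound alone suffices to run the argument, bypassing any interpolation between $L^1\to L^\infty$ and $L^2\to L^2$ that would create circularity with the very estimate being proved.
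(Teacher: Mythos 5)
Your Duhamel reduction and the H\"older step $\|Vu\|_{6/5}\le\|V\|_{3/2}\|u\|_6$ are sound and match the strategy the paper points to. The gap is in the closing step. You propose to derive the homogeneous endpoint estimate $\|e^{-itH}(I-P_{pp})f\|_{L^2_tL^6_x}\lesssim\|f\|_2$ from the dispersive bound \eqref{eq:dispersive} alone via a Keel--Tao-style bilinear argument. This cannot work as stated. The Keel--Tao machine is not driven by a semigroup law $U(t)U(-s)=U(t-s)$; it is driven by the factorization $U(t)U(s)^*$, and for $U(t):=e^{-itH}(I-P_{pp})$ with $H$ non-self-adjoint one has $U(s)^*=(I-P_{pp})^*e^{is\overline{H^*}}$, which is not $e^{isH}(I-P_{pp})$. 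Your pseudo-semigroup identity therefore concerns the wrong flow and never produces the $|t-s|^{-3/2}$-decaying bilinear kernel the argument needs. More fundamentally, the dispersive bound by itself is never enough: the Keel--Tao hypotheses include the $L^2\to L^2$ bound $\|U(t)f\|_2\lesssim\|f\|_2$ as a separate, explicit input (the ``energy estimate''), and that input is exactly \eqref{eq:L2bound}. The circularity you flag at the end is not a technical nuisance to be ``bypassed''; it is structural, and the bilinear route is closed.

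Note also that the paper does not actually prove Theorem~\ref{thm:L2bound}. It defers to Theorem~1 of \cite{Go08} and remarks only that the projection and inverse constructions of Sections~\ref{sec:decomp} and~\ref{sec:inverse} can be grafted onto that argument. The strategy in \cite{Go08} avoids the circularity by never invoking Strichartz estimates for the perturbed propagator as an intermediate step. After the Duhamel reduction, the operator $F\mapsto\int_0^t e^{i(t-s)\Delta}VF(s)\,ds$ is analyzed directly: the Keel--Tao inhomogeneous endpoint estimate for the \emph{free} Laplacian lands in $L^2_tL^{6/5}_x$, and the time-Fourier transform (Plancherel in $t$) reduces invertibility of the resulting integral equation to the uniform boundedness of $(I+VR_0^\pm(\lambda))^{-1}$ in the relevant operator topologies. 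That resolvent bound is precisely what the low/intermediate/high-energy analysis of the present paper supplies, and it requires no prior $L^2$ control of $e^{-itH}$. Replacing your bilinear step with this resolvent-and-Plancherel argument is what closes the proof.
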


There is a slight difference in that the cited result assumes the absence
of generalized eigenfunctions at zero and elsewhere.  However the construction
in Sections~\ref{sec:decomp} and~\ref{sec:inverse} here can be readily
incorporated into the prevailing line of argument, making a separate proof
unnecessary.

The first dispersive estimate of the form~\eqref{eq:dispersive} was proved
in~\cite{JoSoSo91} for real potentials satisfying
a regularity hypothesis $\hat{V} \in L^1(\R^3)$ and pointwise decay bound
$|V(x)| \les \japanese[x]^{-7-\eps}$.  Successive improvements (\cite{Ya95},
~\cite{GoSc04}, and~\cite{Go06b} in chronological order)
relaxed the requirements
on $V$ down to the $L^p \cap L^q$ condition found in the statement of
Theorem~\ref{thm:dispersive}.  In all these results $H$ is assumed not to
possess an eigenvalue or resonance at zero.

The importance of the zero-energy spectral condition was already understood
in earlier work on Schr\"odinger operators in weighted
$L^2(\R^3)$.  Notably,
the asymptotic series expansion for $e^{-itH}$ developed in~\cite{JeKa79}
shows a leading term of order $|t|^{-1/2}$ (rather than the
desired $|t|^{-3/2}$) whenever $H$ has a resonance at zero.  Furthermore,
when $\lambda = 0$ is an eigenvalue this term typically
does not vanish even after projecting away from the eigenfunction(s)
with $I - P_0(H)$.  The same phenomenon is observed in~\cite{ErSc04}
and~\cite{Ya05} in the $L^1 \to L^\infty$ setting.
One general
interpretation of these findings is that threshold eigenvalues of $H$ cast a
significant ``shadow'' onto adjacent parts of the continuous spectrum.

Recent progress on the dynamical behavior of semilinear Schr\"odinger
equations has created a need for dispersive estimates associated to
non-selfadjoint Hamiltonians.  With a model nonlinearity that depends
on $|u(t,x)|^2$, the linearization around any nonzero standing wave
solution will feature coupled equations for the discrepancy $w(t,x)$ and its
complex conjugate $\overline{w(t,x)}$.  The resulting 2$\times$2 system of
Schr\"odinger equations has a matrix ``potential'' whose diagonal elements are
real and whose off-diagonal elements are skew-Hermitian.  In any analogy to
the scalar case these would correspond to the real and imaginary parts of 
our potential $V(x)$.

Stability of the nonlinear equation (or a
characterization of its stable manifold) depends largely on the spectrum of
this linearized operator, which is known for a limited number of examples.
Dispersive estimates then play an important role in controlling the
nonlinear interactions that appear in the course of a fixed-point argument.
A representative sample of results can be found in~\cite{Be08}, \cite{Cu01},
\cite{RoScSo05}, and~\cite{SoWe04}.

Even in these settings it is currently
necessary to assume that the Hamiltonian does not possess any eigenvalues
at the threshold or embedded in its continuous spectrum.  That is to say,
the existing dynamical analysis depends on this property which is not known
to hold in all cases.  The Schr\"odinger operator presented here has a simpler
structure, but we believe it serves as a test case to suggest that eigenvalues
may be admissible in all parts of the spectrum so long as the associated
eigenfunction possesses sufficient decay at infinity.

Returning to Theorem~\ref{thm:dispersive}, much of the work is concentrated in
choosing the correct set of projections in its statement.
The structure of $P_0(H)$ will be examined in detail in
Section~\ref{sec:decomp}, and is representative of all $P_\lambda(H)$. 
One characterization is that $P_{\lambda}(H)$ is
the projection whose range is the generalized eigenspace of $H$ 
over an eigenvalue $\lambda$, and whose adjoint has the generalized
eigenspace of $H^*$ over $\bar{\lambda}$ as its range.  For isolated 
eigenvalues $\lambda \not\in \R^+$
the analytic Fredholm theorem guarantees that
$P_\lambda(H)$ has finite rank.
These projections can also be
defined as an element of the analytic functional calculus of $H$ (as in
\cite{Ar02}, Chapter 1).
Specifically, $P_\lambda(H)$ corresponds to a function that is identically 1 in
a neighborhood of $\lambda$ and vanishes near the remaining spectrum of $H$.
The orthogonality relations $P_\lambda(H)P_\mu(H) = 0$ continue to hold for all
$\lambda \not= \mu$ even when $H$ is not self-adjoint.

The analytic functional calculus is less obviously applicable to eigenvalues
embedded within the essential spectrum $[0,\infty)$ or at its endpoint.
It is shown in~\cite{IoJe03} that Schr\"odinger operators with a complex
potential $V \in L^{3/2}(\R^3)$ possess no eigenvalues along the positive
real axis, however examples with an eigenvalue at $\lambda = 0$ are easily
constructed.

The proof of Theorem~\ref{thm:dispersive} is largely based on elementary
observations, however several detours are needed in order to gather the
relevant background.  The road-map proceeds as follows: 
Section~\ref{sec:reduction} describes a Fourier transform argument that
reduces the problem to an operator estimate involving certain
resolvents, whose basic properties are outlined in 
Section~\ref{sec:resolvents}.  Separate calculations are then required for
high, intermediate, and low energies, with the first two being borrowed
nearly intact from~\cite{Go06b}.  Some details are sketched out in
Section~\ref{sec:highmedenergy} for future reference. 
In Section~\ref{sec:decomp} we examine the structure of $X$ and its 
duality relationship to $\overline{X}$ in order to construct the projection
$P_0(H)$.  Section~\ref{sec:inverse} assembles the resolvent of $H$ in the
neighborhood of its threshold eigenvalue, and the concluding section
estimates its behavior once the poles associated to each (generalized)
eigenfunction are projected away.

\section{Reduction to Resolvent Estimates} \label{sec:reduction}

Let $H=-\Delta+V$ in $\R^3$ and define the resolvents
$R_0(z) :=(-\Delta-z)^{-1}$ and $R_V(z):= (H-z)^{-1}$.
For each $z \in \Compl \setminus \R^+$, the operator $R_0(z)$ 
is given by convolution with the kernel
\begin{equation*}
R_0(z)(x,y) = \frac{e^{i\sqrt{z}|x|}}{4\pi|x|},
\end{equation*}
where $\sqrt{z}$ is taken to have positive imaginary part.
While $R_V(z)$ is not translation--invariant and
does not possess an explicit representation of this form,
it can be expressed in terms of $R_0(z)$ via the identity
\begin{equation} \label{eq:ResIdent}
R_V(z) = (I + R_0(z)V)^{-1}R_0(z) = R_0(z)(I + VR_0(z))^{-1} 
\end{equation}
In the case where $z = \lambda \in \R^+$, the resolvent may be defined
as a limit of the form $R_0(\lambda \pm i0) := \lim_{\eps\downarrow 0} 
R_0(\lambda\pm i\eps)$.  The choice of sign determines which branch of the
square-root function is selected in the formula above, therefore the two
continuations do not agree with one another.  
In this paper we refer to resolvents along the positive real axis using
the following notation.
\begin{equation*}
R_0^\pm(\lambda) := R_0(\lambda\pm i0) \qquad
R_V^\pm(\lambda) := R_V(\lambda\pm i0)
\end{equation*}

Note that $R_0^-(\lambda)$ is the formal adjoint of $\Res(\lambda)$,
and more generally $R_V^\pm(\lambda)$ is the formal adjoint of
$R_{\overline{V}}^\mp(\lambda)$.
In fact these resolvents will be truly adjoint to one another,
since our analysis takes place in settings where they are 
bounded operators.  Determining the adjoint of $H$ is a more
delicate matter because of technicalities related to its domain,
but it can be done because $V\in L^{\frac32}(\R^3)$,
see Theorem X.19 in~\cite{ReSi2}.

If $V$ were real-valued and satisfied the $L^p$ condition in
Theorem~\ref{thm:dispersive} (making $H$ self-adjoint), then the Stone
formula for the absolutely continuous spectral measure of $H$ would
dictate that
\begin{equation*}
e^{-itH}f = \sum_j e^{-i\lambda_j t} \la f,\psi_j\ra\psi_j
+ \frac{1}{2\pi i}\int_0^\infty e^{-it\lambda}
[R_V^+(\lambda)-R_V^-(\lambda)]f \,d\lambda.
\end{equation*}
The summation takes place over a countable number of eigenvalues
$\lambda_j$ with associated eigenfunctions $\psi_j$.  A similar ansatz,
modified to correctly portray the evolution of generalized eigenfunctions
of a non-selfadjoint $H$, is
valid for complex $V$ as well.  Here the spectral decomposition
takes the form
\begin{equation} \label{eq:Stone}
\begin{aligned}
e^{-itH}f = \sum_{\lambda_j \not= 0}  e^{-itH}P_{\lambda_j}(H)f 
 &+ e^{-itH}P_0(H)f \\
&+ \frac{1}{2\pi i}\int_0^\infty e^{-it\lambda}
[R_V^+(\lambda)-R_V^-(\lambda)]f \,d\lambda.
\end{aligned}
\end{equation}
The range of each $P_{\lambda_j}(H)$ is a finite-dimensional subspace that is
invariant under the action of $H$.  The restriction of $H$ to this space
possesses a single eigenvalue $\lambda_j$, thus $e^{-itH}$ behaves here like
$e^{-i\lambda_j t}$ times a square matrix that grows polynomially in $t$.

In both cases the projection
$I - P_{pp}(H)$ removes eigenfunctions in such a way that the
initial sum vanishes.  The success of dispersive estimates is then governed
by the integral term, in particular by its regularity approaching the
endpoint at zero.
It is customary to view the right-hand integral, together with the discrete
$\lambda=0$ term, as a contour integral in the complex plane.
Making a change of variables $\lambda \mapsto \lambda^2$ opens up the
contour along the real axis, with the understanding that
\begin{equation*}
R_V^+(\lambda^2) = \lim_{\eps\downarrow 0}R_V((\lambda + i\eps)^2)
 = \lim_{\eps\downarrow 0}R_V(\lambda^2 + i\,{\rm sign}\,(\lambda)\eps)
\end{equation*}
for all $\lambda \in \R$.  Under this new notation, the integral term
(representing the absolutely continuous part of the Schr\"odinger evolution)
combines with the contribution of threshold eigenvalues to yield
\begin{align*}
\frac{1}{\pi i}&\int_{-\infty}^\infty e^{-it\lambda^2} \lambda
R_V^+(\lambda^2)f \,d\lambda \\
&=\ \frac{1}{\pi i}\int_{-\infty}^\infty e^{-it\lambda^2} \lambda
\Res(\lambda^2)(I + V\Res(\lambda^2))^{-1}f\, d\lambda.
\end{align*}
A formal integration by parts leads to the expression
\begin{equation*}
\frac{1}{2\pi t}\int_{-\infty}^\infty e^{-it\lambda^2}
\big(I + \Res(\lambda^2)V\big)^{-1}\frac{d}{d\lambda}\Big[\Res(\lambda^2)
\Big] \big(I+V\Res(\lambda^2)\big)^{-1}f\, d\lambda.
\end{equation*}
If we adopt the shorthand notation $T^+(\lambda) := 
(I + V\Res(\lambda^2))^{-1}$ and also take $T^-(\lambda) :=
(I + \overline{V}R_0^-(\lambda^2))^{-1}$,
Theorem~\ref{thm:dispersive} should be
equivalent to the estimate
\begin{equation} \label{eq:reduction}
\Big|\int_{-\infty}^\infty e^{-it\lambda^2} 
\Big\la \frac{d}{d\lambda}\Big[\Res(\lambda^2)\Big] 
T^+(\lambda) f,\, T^-(\lambda) g\Big\ra\, d\lambda \Big| \\
\les |t|^{-\frac12}
\norm[f][1] \norm[g][1]
\end{equation}
holding for all $f \in {\rm ran} (I-P_{pp}(H)) \subset L^1$ and 
$g \in {\rm ran}\,(I-P_{pp}(H)^*) \subset L^1$.  It is not necessary to
test every $g \in L^1$ because of the operator identity
\begin{equation*}
e^{-itH}(I-P_{pp}(H)) \  = \ 
(I-P_{pp}(H))e^{-itH}(I-P_{pp}(H))
\end{equation*}
which follows from $I-P_{pp}(H)$ being an idempotent function of $H$.

\begin{remark}
To conduct the formal steps properly, one should introduce a smooth cutoff
$\chi(\lambda/L)$ into the integrand, then take limits as $L \to \infty$.
The novel analysis in this paper takes place when $\lambda$ is confined to
a compact neighborhood of zero, where it is reasonable to assume that any such
cutoff function is identically 1.
\end{remark}

The integral in \eqref{eq:reduction} can be evaluated via Plancherel's
identity.  First note that 
$e^{-it\lambda^2}\frac{d}{d\lambda}[\Res(\lambda^2)]$ is a family of
convolution operators represented explicitly by the kernel
$(-4\pi i)^{-1}e^{-it\lambda^2 + i \lambda |x|}$. 
Taking $\rho$ as the variable dual to $\lambda$, its Fourier transform 
should be a family of convolution operators with kernel
\begin{equation*}
 \big({\txt \frac{d}{d\lambda}}[\Res(\lambda^2)]\big)\hat{\phantom{i}}(\rho)
{\scr (x)} = (16\pi i t)^{-\frac12} e^{i\frac{(\rho - |x|)^2}{4t}}.
\end{equation*}
In particular this is bounded by $|t|^{\frac12}$ for every value of $\rho$
and $|x|$.  Theorem~\ref{thm:dispersive} then follows from as appropriate
$L^1$ estimate on the Fourier transform of $T^\pm(\lambda)$.  This is the
approach taken in~\cite{Go06b}, whose principal technical result is stated
below.
\begin{theorem}
Let $V \in L^p(\R^3) \cap L^q(\R^3)$, $p < \frac32 < q$, be a real-valued
potential and suppose that $H = -\Delta + V$ does not have a resonance or
eigenvalue at zero.  Then
\begin{equation*}
\bignorm[(T^\pm)\hat{\,}(\rho)f][L^1_\rho L^1_x] \les \norm[f][1]
\end{equation*}
for all $f \in L^1(\R^3)$.
\end{theorem}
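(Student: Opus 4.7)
The plan is to split the symbol $T^\pm(\lambda) = (I + VR_0^\pm(\lambda^2))^{-1}$ via a smooth partition of unity in $\lambda$ supported in three regimes: high-energy $\{|\lambda| \gtrsim \Lambda\}$, intermediate $\{\delta \lesssim |\lambda| \lesssim \Lambda\}$, and low-energy $\{|\lambda| \lesssim \delta\}$, with $\Lambda$ large and $\delta$ small to be chosen at the end. In each regime the Fourier transform in $\lambda$ of the cutoff piece is analyzed separately, and the triangle inequality assembles the three estimates. The building block in every regime is the identity
\begin{equation*}
\bigl(VR_0^\pm(\lambda^2)\bigr)^\wedge(\rho)(x,y) = \frac{V(x)}{2|x-y|}\,\delta(\rho \mp |x-y|),
\end{equation*}
which expresses a single factor $VR_0^\pm(\lambda^2)$, after Fourier transform in $\lambda$, as an operator-valued measure on $\R_\rho$ whose total mass (viewed as a map $L^1_x \to L^1_x$) is controlled by weighted norms of $V$, ultimately dominated by $\|V\|_p + \|V\|_q$ via Young's inequality. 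Iterated convolutions of such measures retain $L^1_\rho$ control.

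For the high-energy piece I would use the Born series $T^\pm = \sum_{k\ge 0}(-VR_0^\pm)^k$, arguing that once $\Lambda$ is large enough $\chi_{\rm hi}(\lambda)VR_0^\pm(\lambda^2)$ has operator norm strictly less than $1$ in the ambient space, so the series converges in $L^1_\rho L^1_x$ uniformly; the necessary norm decay in $\lambda$ is standard and comes from oscillatory integration against $e^{\pm i\lambda|x-y|}$. For intermediate energy, analyticity of $T^\pm(\lambda)$ on the compact annulus---supplied by the analytic Fredholm theorem together with the absence of resonances and of positive-energy embedded eigenvalues (the latter automatic for $V \in L^{3/2}$ by~\cite{IoJe03})---combined with the smoothness and compact support of $\chi_{\rm mid}$, produces rapid decay in $\rho$ after Fourier transform, so the $L^1_\rho L^1_x$ estimate is immediate.

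The low-energy regime is the main obstacle. The hypothesis that zero is neither a resonance nor an eigenvalue makes $I + VR_0^\pm(0)$ invertible on an appropriate weighted space, so by a resolvent-type identity
\begin{equation*}
T^\pm(\lambda) = T^\pm(0)\bigl(I + [VR_0^\pm(\lambda^2) - VR_0^\pm(0)]\,T^\pm(0)\bigr)^{-1},
\end{equation*}
and the crux is to show that the Fourier transform of $\chi_{\rm lo}(\lambda)[VR_0^\pm(\lambda^2) - VR_0^\pm(0)]$ still yields an $L^1_\rho L^1_x$-controllable kernel. This is where the decay $V \in L^q$ with $q > \frac32$ is essential, tempering the kernel $V(x)/|x-y|$ at large $|x-y|$, where the oscillatory difference $e^{\pm i\lambda|x-y|} - 1$ would otherwise be unbounded on the support of $\chi_{\rm lo}$. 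Once this difference estimate is in place, one more Neumann series expands the inverse factor, and the high- and intermediate-energy arguments close the loop.
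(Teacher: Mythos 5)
Your three-regime decomposition is the right skeleton, and the low-energy sketch (resolvent identity through $T^\pm(0)$, plus a difference estimate for $V[R_0^\pm(\lambda^2)-R_0^\pm(0)]$) is in the same spirit as the paper's local Neumann series around a benchmark energy $\lambda_0$. But the high- and intermediate-energy arguments as written both contain genuine gaps.

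For the high-energy piece, the claim that $\chi_{\rm hi}(\lambda)VR_0^\pm(\lambda^2)$ has operator norm strictly less than one on $L^1$ for large $\Lambda$ is false: the $L^1\to L^1$ norm of $VR_0^\pm(\lambda^2)$ is $\sup_y \int |V(x)|/(4\pi|x-y|)\,dx$, which is \emph{independent of $\lambda$} because $|e^{\pm i\lambda|x-y|}|\equiv 1$, and is of size $\|V\|$, not small. The decay used in the paper (Proposition~\ref{prop:decay}) is for $\|(VR_0^\pm(\lambda^2))^2\|_{1\to 1}$, not for a single factor, and it is obtained from Kenig--Ruiz--Sogge type $L^p\to L^{p'}$ bounds, not from $L^1\to L^1$ oscillation. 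Moreover, even a pointwise-in-$\lambda$ operator bound does not by itself control the $L^1_\rho$ mass of the $m$th Born term after Fourier transform: the total mass of the FT of a single factor is $\sim\|V\|$ for exactly the reason above, so the Born series does not converge in $L^1_\rho L^1_x$ by naive iteration. The cited paper handles the first few terms by direct kernel computation and then exploits the paired decay $(VR_0^\pm)^2$ together with the cutoff to make the tail geometric with ratio $\lambda_1^{-\eps}\|V\|$.

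For the intermediate-energy piece, the claim that $T^\pm(\lambda)$ is analytic on a real annulus and that, combined with a smooth compactly supported cutoff, the Fourier transform decays rapidly, is also incorrect. With only $V\in L^p\cap L^q$, $p<\frac32<q$, the operator $\lambda\mapsto VR_0^\pm(\lambda^2)$ is not analytic in a complex neighborhood of the real axis, and in fact it is not even $C^1$ as a map into $\mathcal{B}(L^1)$: differentiating the kernel in $\lambda$ brings down a factor $|x-y|$ and the resulting kernel $|V(x)|\,e^{\pm i\lambda|x-y|}/(4\pi)$ is no longer $L^1_x$ uniformly in $y$ unless $V$ has much stronger decay. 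So the cutoff piece is merely H\"older continuous and its Fourier transform need not decay, let alone rapidly. The paper's actual device is the local Neumann series~\eqref{eq:Neumann} around benchmark inverses $S_0=(I+VR_0^\pm(\lambda_0^2))^{-1}$ (which exist by Fredholm and the absence of embedded spectrum) on windows of width $r$, together with the quantitative kernel estimate $\sup_y\int_{\R^4}|\hat{K}(x,y,\rho)|\,dx\,d\rho\les r^\eps\|V\|$ for the Fourier transform of $VB_{\lambda_0}^+(\lambda^2)\chi((\lambda-\lambda_0)/r)$; the smallness comes from the window radius $r$, not from $\lambda$-regularity of $T^\pm$. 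When zero is a regular point, $\lambda_0=0$ is itself an admissible benchmark, and this single mechanism covers both your intermediate and low regimes.
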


By contrast, if there exists an eigenvalue or resonance at zero, then
$T^\pm(\lambda)$ must have a corresponding pole.  One cannot expect the Fourier
transform of an unbounded family of operators to satisfy any kind of $L^1$
estimate.  The proof of Theorem~\ref{thm:dispersive} therefore hinges on a
modified $L^1$ bound which carefully avoids any poles.

\begin{theorem} \label{thm:goal}
Let $V \in L^p(\R^3) \cap L^q(\R^3)$, $p < \frac32 < q$, be a complex
potential.  Suppose that $H = -\Delta + V$ has no resonances along the 
interval $[0,\infty)$, and that the eigenspace at zero satisfies
assumptions~\ref{A1} and ~\ref{A2}.  Then
\begin{equation*}
\bignorm[(T^+)\hat{\,}(\rho)f][L^1_\rho L^1_x] \les \norm[f][1]
\end{equation*}
for all $f \in L^1(\R^3)$ orthogonal to $\X$ in the sense
that $\int f(x)\Psi(x)\,dx = 0$ for each $\Psi \in X$.  By taking complex
conjugates, the same estimate is true of $(T^-)\hat{\,}(\rho)$ applied to
all integrable functions $g \perp X$.
\end{theorem}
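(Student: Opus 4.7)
The plan is to localize the spectral parameter $\lambda$ into a small neighborhood of the origin and its complement, treating the two ranges separately. On the complement, where $|\lambda| \gtr \lambda_0$ for a suitably small $\lambda_0 > 0$, the operator $T^+(\lambda)$ is bounded and its Fourier analysis is controlled by the bounds of~\cite{Go06b} recalled in Section~\ref{sec:highmedenergy}. That analysis does not require self-adjointness of $H$: what matters is invertibility of $I + V\Res(\lambda^2)$ for $\lambda > 0$, which here follows from the absence of embedded eigenvalues (precluded by~\cite{IoJe03} under $V \in L^{3/2}$) and from the assumed absence of resonances on $(0,\infty)$. With $\chi_0(\lambda)$ a smooth cutoff supported in $|\lambda| \le \lambda_0$, the task reduces to
\begin{equation*}
\bignorm[\bigl(\chi_0(\lambda)T^+(\lambda) f\bigr)\hat{\,}(\rho)][L^1_\rho L^1_x] \ \les \ \norm[f][1]
\end{equation*}
for every $f \in L^1(\R^3)$ with $\la f,\Psi\ra = 0$ for all $\Psi \in X$.

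Near $\lambda = 0$ the operator $I + V\Res(\lambda^2)$ degenerates: at $\lambda = 0$ its null space is $V\cdot X_1$, and the tower $X_1 \subset X_2 \subset \cdots \subset X_K$ of~\ref{A1}--\ref{A2} produces higher-order vanishing associated to the generalized eigenspace $X$. The construction in Section~\ref{sec:inverse} should produce a Laurent expansion
\begin{equation*}
T^+(\lambda) \ = \ \sum_{k=1}^{K} \lambda^{-k} A_k \ + \ B(\lambda),
\end{equation*}
in which each finite-rank residue $A_k$ factors through a bilinear pairing against $X$ on the input side (with range related to $\X$), while $B(\lambda)$ is holomorphic in a complex neighborhood of zero and uniformly bounded from $L^1$ to $L^1$ there. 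The orthogonality hypothesis annihilates every $A_k f$, so that $\chi_0(\lambda) T^+(\lambda) f = \chi_0(\lambda) B(\lambda) f$ is a genuinely bounded family.

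The remaining $L^1_\rho L^1_x$ estimate for $\chi_0(\lambda)B(\lambda)$ I would carry out factor by factor using
\begin{equation*}
\Res(\lambda^2)(x,y) \ = \ \frac{e^{i\lambda|x-y|}}{4\pi|x-y|},
\end{equation*}
so that the Fourier transform in $\lambda$ of $\chi_0(\lambda) \Res(\lambda^2)$ is, up to a constant, $\hat\chi_0(\rho - |x-y|)/|x-y|$, an $L^1_\rho$ kernel of norm $\les 1/|x-y|$ uniformly in $x,y$. Each occurrence of $\Res(\lambda^2)$ inside $B(\lambda)$ contributes such a factor, interleaved with multiplication by $V$ and by the smooth finite-dimensional matrix pieces produced in the Grushin reduction; the $L^p \cap L^q$ hypothesis on $V$ then transfers these kernels between the appropriate Lebesgue classes exactly as in the high-energy argument, and assembling the factors yields the claimed bound.

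The main obstacle is the algebraic bookkeeping that produces $B(\lambda)$ in the form just described, together with the verification that the residues $A_k$ really do factor through pairings against $X$ alone. When the generalized eigenspace carries nontrivial Jordan chains the pole order $K$ exceeds one, and the reduction must be organized carefully around the chain $X_1 \subset \cdots \subset X_K$ and its dual realization inside $\X$. Each $A_k$ must be identified as a sum of rank-one operators of the shape $f \mapsto \la f,\Psi\ra \overline{\Phi}$ with $\Psi \in X$ and $\overline{\Phi} \in \X$, so that the single condition $f \perp X$ is exactly enough to kill the entire singular part. Once this identification and the analytic control of $B(\lambda)$ are in hand, the Fourier estimate in the preceding paragraph is a straightforward adaptation of~\cite{Go06b}.
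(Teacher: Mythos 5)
Your outline follows the same overall strategy as the paper: split $\lambda$ into a compact neighborhood of zero and its complement, invoke the arguments of~\cite{Go06b} on the complement, extract a singular part from $(I+V\Res(\lambda^2))^{-1}$ near zero that is annihilated by the orthogonality hypothesis, and estimate the remaining bounded family factor by factor using the explicit resolvent kernel. That is the correct architecture, and your remark that each residue must be a sum of rank-one operators pairing against $X$ is the right target.

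However, the central step of the argument is not actually carried out. You acknowledge that the ``algebraic bookkeeping that produces $B(\lambda)$'' is the main obstacle, but that bookkeeping is precisely the novel content of the theorem and cannot be asserted as a black box. Concretely, the proof needs (i) a bounded one-sided inverse $S_0$ of $I+V\Res(0)$ between explicitly chosen subspaces of $L^1$, which forces one to identify the cokernel with $\X_1$ and to select a canonical complement of $VX_1$ (the paper chooses $(R_0^-(0)\X_{\rm diag})^\perp$, and this choice matters for the final Fourier estimate); (ii) a duality theory between $X$ and $\X$ together with a Jordan basis $\psi_{j,k}^{(\ell)}$ for $X$ whose complex conjugates form a dual basis for $\X$ --- this occupies a full section of linear algebra (Lemma~\ref{lem:Xbasis}, Lemma~\ref{lem:X_jkduality}, Lemma~\ref{lem:Xduality}) and is needed to make the rank-one structure of the residues explicit; and (iii) the algebraic identity of Corollary~\ref{cor:telescope} for $(I+\Res(\lambda^2)V)$ applied to the partial Jordan chains, which is what converts the abstract Fredholm picture into the usable formula of Lemma~\ref{lem:Inverse}. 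Without these, the claimed decomposition $T^+(\lambda) = \sum_k \lambda^{-k}A_k + B(\lambda)$ is a statement of the desired conclusion, not a derivation of it.

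Two smaller points. First, the singular powers are even, $\lambda^{-2m}$ for $1\le m\le K$, not $\lambda^{-k}$ up to $\lambda^{-K}$; since every singular coefficient is killed by orthogonality the exponents do not matter in the end, but the ansatz as written is inexact. Second, $B(\lambda)$ is not holomorphic in an $L^1\to L^1$ operator sense in any complex neighborhood of zero (the free resolvent kernel $e^{i\lambda|x-y|}/4\pi|x-y|$ grows exponentially for $\Im\lambda<0$ and $V$ has only polynomial decay); the paper uses only real-variable continuity plus the convergent Neumann series for $S(\lambda)$ localized by a cutoff, which is what makes the Fourier transform in $\lambda$ along the real line estimable. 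In particular the bounded part of the inverse that remains after orthogonality includes the term $\big\la S(\lambda)\tilde{Q}_0 f,\, R_0^-(\lambda^2)\bar{\psi}_{k,k}^{(\ell)}\big\ra$, whose Fourier transform is handled by a convolution of two kernels ($K_1$ and $K_2$ in Theorem~\ref{thm:lowenergy}) rather than by a pure factor-by-factor expansion; your generic description would need to account for this inner-product structure explicitly.
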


The class of admissible $f$ should be recognized as the range of $I - P_0(H)$.
Working backward, Theorem~\ref{thm:goal} implies that the
original integral term in~\eqref{eq:Stone}
is always
bounded in $L^\infty(\R^3)$ with norm controlled by
$|t|^{-\frac32}\norm[f][1]$.  The proof of Theorem~\ref{thm:dispersive}
follows by adding in, then projecting away,
the contributions of each discrete eigenvalue $\lambda_j$.

\section{Resolvent Estimates and Resonances} \label{sec:resolvents}
We catalog several properties of the free Schr\"odinger resolvent for
future use.  These are all immediate consequences of the convolution
representation 
\begin{equation} \label{eq:FreeRes}
\R_0^\pm(\lambda^2)(x) = \frac{e^{\pm i\lambda|x|}}{4\pi|x|}.
\end{equation}
It will be assumed at all times that $V \in L^p(\R^3) \cap L^q(\R^3)$,
$p < \frac32 < q$, and is measured according to the norm
\begin{equation*}
\norm[V][] := \max\big(\norm[V][p], \norm[V][q]\big).
\end{equation*}

\begin{proposition} \label{prop:continuity}
The convolution kernel of $\Res(\lambda^2)$ 
belongs to $L^3_{\rm weak}(\R^3)$ uniformly for all $\lambda \in \R$.
Moreover, given any $p < \frac32$, the convolution kernel of $\Res(\lambda^2)
- \Res(\mu^2)$ belongs to $L^{p'}(\R^3)$ with a norm uniformly bounded by
$|\lambda - \mu|^{1-\frac{3}{p'}}$.

Given any function $V \in L^p \cap L^q$, $p < \frac32 < q$,
the family of operators $I + V\Res(\lambda^2)$ 
are each bounded on $L^1(\R^3)$ and vary continuously with the parameter
$\lambda \in \R$.
\end{proposition}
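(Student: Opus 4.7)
All three assertions should reduce to direct manipulations of the explicit kernel $\Res(\lambda^2)(x) = e^{i\lambda|x|}/(4\pi|x|)$ from \eqref{eq:FreeRes}, using Young's inequality, H\"older, and the elementary pointwise bound $|e^{i\lambda r}-e^{i\mu r}| \le \min(|\lambda-\mu|\,r,\,2)$ valid for $r \ge 0$. No heavy spectral machinery is needed.

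The weak-$L^3$ claim is immediate: the modulus of the kernel equals $(4\pi|x|)^{-1}$ for every $\lambda \in \R$, and the level-set computation $|\{x: |x|^{-1} > t\}| = \frac{4\pi}{3}t^{-3}$ gives a finite $L^{3,\infty}$ norm independent of $\lambda$. For the difference bound, the kernel of $\Res(\lambda^2)-\Res(\mu^2)$ is majorized in modulus by $(4\pi)^{-1}\min(|\lambda-\mu|,\,2/|x|)$. Raising to the $p'$-th power and splitting at the crossover radius $R := 2/|\lambda-\mu|$, one is led to
\[
\int_{|x|<R}|\lambda-\mu|^{p'}\,dx + \int_{|x|>R}|x|^{-p'}\,dx,
\]
and each piece evaluates to a constant multiple of $|\lambda-\mu|^{p'-3}$, provided $p' > 3$ so that the second integral converges at infinity. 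Taking $p'$-th roots produces the claimed $|\lambda-\mu|^{1-3/p'}$ bound; this is exactly where the hypothesis $p < \frac32$ is used.

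For the final assertion, $L^1$-boundedness of $V\Res(\lambda^2)$ reduces by Fubini to the inequality
\[
\sup_y \int |V(x)|\,|x-y|^{-1}\,dx \les \norm[V][].
\]
Splitting the inner integral at $|x-y|=1$, H\"older applied with the pair $(q,q')$ controls the near portion (using $q' < 3$, i.e.\ $q > \frac32$) and H\"older applied with $(p,p')$ controls the far portion (using $p' > 3$, i.e.\ $p < \frac32$); together these yield a bound of the form $C\norm[V][]$. Continuity in $\lambda$ then follows by chaining the Part-2 estimate with ordinary Young's inequality to obtain $\|(\Res(\lambda^2)-\Res(\mu^2))f\|_{p'} \les |\lambda-\mu|^{1-3/p'}\norm[f][1]$, and then invoking H\"older against $V \in L^p$ to return to $L^1$. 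Since $1-3/p' > 0$ when $p < \frac32$, this gives H\"older continuity (not merely continuity) of $\lambda \mapsto I + V\Res(\lambda^2)$ as operators on $L^1(\R^3)$. There is no real obstacle in the argument; the only sharp ingredients are the strict inequalities $p < \frac32 < q$, which are exactly what make $|x-y|^{-1}$ integrable against $|V|$ both locally and at infinity.
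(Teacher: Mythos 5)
Your proof is correct and follows essentially the same route as the paper's, which simply states that all three assertions follow by direct inspection of the kernel $e^{i\lambda|x|}/(4\pi|x|)$ together with H\"older; your splitting of the kernel at $|x-y|=1$ (with the $(q,q')$ pairing near the diagonal and $(p,p')$ far from it) is precisely the decomposition the paper compresses into the phrase ``$\Res(\lambda^2): L^1 \to L^{p'}+L^{q'}$.'' You have merely filled in the elementary details the paper leaves to the reader, including the explicit computation showing why $p<\tfrac32$ (hence $p'>3$) is exactly what makes the difference-kernel estimate and the far-field integral work.
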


\begin{proof}
The first two assertions are verified by direct inspection of the free
resolvent kernel in~\eqref{eq:FreeRes}.  They show that $\Res(\lambda^2)$ is
a bounded map from $L^1(\R^3)$ to $L^{p'}(\R^3) + L^{q'}(\R^3)$,
and provide a norm estimate for the difference $\Res(\lambda^2) - \Res(\mu^2)$.
Multiplication by $V$ then maps this space back to $L^1(\R^3)$ by H\"older's
inequality.  
\end{proof}

\begin{proposition} \label{prop:compactness}
Suppose $V \in L^p(\R^3) \cap L^q(\R^3)$, $p < \frac32 < q$.
For each $\lambda \in \R$, $V\Res(\lambda^2)$ is a compact operator from
$L^1(\R^3)$ to itself.
\end{proposition}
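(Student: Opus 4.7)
The strategy is to realize $VR_0^+(\lambda^2)$ as an operator-norm limit of compact operators on $L^1(\R^3)$. I would first approximate $V$ by bounded, compactly supported $V_n$ (for instance $V_n := V\chi_{\{|V|\le n\}\cap\{|x|\le n\}}$), so that $V_n \to V$ in both $L^p$ and $L^q$ by dominated convergence. The H\"older-inequality estimate implicit in the proof of Proposition~\ref{prop:continuity} yields
\[
\|(V-V_n)R_0^+(\lambda^2)\|_{L^1 \to L^1} \les \|V-V_n\|_{L^p \cap L^q},
\]
which tends to zero, so it is enough to show compactness of each $V_n R_0^+(\lambda^2)$.

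For fixed $n$, the operator $V_n R_0^+(\lambda^2)$ has the explicit integral kernel $V_n(x)\frac{e^{i\lambda|x-y|}}{4\pi|x-y|}$. I would regularize further by excising a diagonal neighborhood $\{|x-y|<\eps\}$ and truncating the far field $\{|x-y|>R\}$ with smooth cutoffs, producing a bounded, continuous, compactly supported kernel $K_n^{\eps,R}$. Bounding the $L^1 \to L^1$ operator norm of a kernel operator by $\sup_y \|K(\cdot,y)\|_{L^1_x}$, the difference $V_n R_0^+(\lambda^2) - T_{K_n^{\eps,R}}$ is controlled in this norm by $\|V_n\|_\infty$ times
\[
\sup_y\int_{|x-y|<\eps}\frac{dx}{|x-y|} + \sup_y\int_{\{|x-y|>R\}\cap\,\mathrm{supp}(V_n)}\frac{dx}{|x-y|} = O(\eps^2) + O(R^{-1}),
\]
which vanishes as $\eps \to 0$ and $R \to \infty$. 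Each $T_{K_n^{\eps,R}}$ is compact on $L^1(\R^3)$ by the Fr\'echet-Kolmogorov criterion: its range is supported in $\mathrm{supp}(V_n)$ (automatic tightness), and the uniform continuity of $K_n^{\eps,R}$ supplies translation equicontinuity via $\|T_{K_n^{\eps,R}}f(\cdot+h) - T_{K_n^{\eps,R}}f\|_{L^1} \le \|f\|_1 \sup_y\|K_n^{\eps,R}(\cdot+h,y) - K_n^{\eps,R}(\cdot,y)\|_{L^1_x}$.

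The main technical hurdle is that the free resolvent kernel $e^{i\lambda|x-y|}/|x-y|$ is simultaneously singular on the diagonal and decays only like $|x-y|^{-1}$ at infinity, which obstructs a direct application of Fr\'echet-Kolmogorov or any classical compact-embedding theorem to $V_n R_0^+(\lambda^2)$ itself. The two-stage truncation above sidesteps this difficulty by reducing to regular kernel operators; the norm closure of the compact operators then propagates compactness through both approximation steps to $VR_0^+(\lambda^2)$.
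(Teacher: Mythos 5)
Your proof is correct and takes a genuinely different route from the paper's. Both arguments begin the same way --- approximate $V$ by $V_n$ bounded with compact support, observe $\|(V-V_n)\Res(\lambda^2)\|_{L^1\to L^1}\les \|V-V_n\|_{L^p\cap L^q}\to 0$, and use that compact operators are norm-closed --- but they diverge in how compactness is established for nice $V_n$. The paper additionally takes $V_n$ smooth and argues via elliptic regularity: $(-\Delta+1)V_n\Res(\lambda^2)$ is bounded on $L^1$, so the range of $V_n\Res(\lambda^2)$ lies in $W^{2,1}$ with support in $\mathrm{supp}\,V_n$, which embeds compactly in $L^1$ by Rellich--Kondrachov. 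You instead keep $V_n$ merely bounded with compact support, regularize the free resolvent kernel by excising the diagonal singularity and truncating the far field, and invoke Fr\'echet--Kolmogorov on the resulting bounded, compactly-supported kernel operator. The two proofs place the smoothing in different places (on the potential vs.\ on the kernel) and appeal to different compactness mechanisms (Sobolev embedding vs.\ translation equicontinuity); yours is somewhat more self-contained in that it bypasses any discussion of $L^1$-based elliptic gains.

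One small inaccuracy, worth fixing: your $V_n := V\chi_{\{|V|\le n\}\cap\{|x|\le n\}}$ is bounded and compactly supported but generally not continuous, so the truncated kernel $K_n^{\eps,R}(x,y) = V_n(x)G_{\eps,R}(x-y)$ is not uniformly continuous in $x$ as you assert. This does not break the argument, but the reason for translation equicontinuity is slightly different than stated: write
\begin{equation*}
K_n^{\eps,R}(x+h,y) - K_n^{\eps,R}(x,y)
 = \big(V_n(x+h) - V_n(x)\big)G_{\eps,R}(x+h-y)
 + V_n(x)\big(G_{\eps,R}(x+h-y) - G_{\eps,R}(x-y)\big).
\end{equation*}
Taking $\sup_y\|\cdot\|_{L^1_x}$, the first term is controlled by $\|G_{\eps,R}\|_\infty\,\|V_n(\cdot+h) - V_n\|_{L^1}$, which tends to zero by continuity of translation in $L^1$ (no pointwise continuity of $V_n$ needed), and the second by $\|V_n\|_{L^1}$ times the modulus of continuity of the genuinely uniformly continuous $G_{\eps,R}$. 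Stating the splitting explicitly closes the gap. Alternatively, replacing $V_n$ by a smooth compactly supported approximant from the outset removes the issue entirely and brings your first step in line with the paper's.
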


\begin{proof}
If $V$ were a smooth, bounded function with compact support, then 
$(-\Delta+1)V\Res(\lambda^2)$ would also map $L^1(\R^3)$ to itself thanks to
the identity $(-\Delta+1)\Res(\lambda^2) = I + (\lambda^2+1)\Res(\lambda^2)$.
In that case the target space of $V\Res(\lambda^2)$ consists of functions
in $W^{2,1}(\R^3)$ whose support is contained in ${\rm supp}\, V$.  This embeds
compactly into $L^1(\R^3)$ as desired.

The operator norm of $V\Res(\lambda^2)$ is always controlled by $\norm[V][]$.
So for general potentials it suffices to express $V$ as a norm limit of
smooth compactly supported functions.
\end{proof}

It is less immediate, but still well known that $I + V\Res(\lambda^2)$
becomes a small perturbation of the identity once $|\lambda|$ is sufficiently
large.

\begin{proposition} \label{prop:decay}
Suppose $V \in L^p(\R^3) \cap L^q(\R^3)$, $p < \frac32 < q$.  Then
\begin{equation*}
\lim_{|\lambda| \to \infty} \norm[(V\Res(\lambda^2))^2][1 \to 1] = 0.
\end{equation*}
\end{proposition}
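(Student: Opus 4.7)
The plan is a density argument that reduces the problem to a single oscillatory integral estimate for smooth, compactly supported potentials. Given $\epsilon > 0$, I would choose $W \in C_c^\infty(\R^3)$ with $\norm[V-W][p] + \norm[V-W][q] < \epsilon$; this is possible because $C_c^\infty$ is dense in $L^p \cap L^q$. The bound $\norm[U\Res(\lambda^2)][1 \to 1] \les \norm[U][]$ implicit in the proof of Proposition~\ref{prop:continuity} is uniform in $\lambda \in \R$, so expanding
\begin{equation*}
(V\Res(\lambda^2))^2 = ((V-W)\Res)^2 + (V-W)\Res\,W\Res + W\Res\,(V-W)\Res + (W\Res)^2
\end{equation*}
bounds each of the first three summands in $L^1 \to L^1$ by a multiple of $\epsilon(\norm[V][] + \epsilon)$, independently of $\lambda$. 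The problem therefore reduces to showing $\norm[(W\Res(\lambda^2))^2][1 \to 1] \to 0$ as $|\lambda|\to\infty$ for each fixed $W \in C_c^\infty$, after which letting $\epsilon \to 0$ concludes.

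The integral kernel of $(W\Res(\lambda^2))^2$ at $(x,z)$ equals
\begin{equation*}
K_\lambda(x,z) \;=\; \frac{W(x)}{(4\pi)^2}\int_{\R^3}\frac{W(y)\, e^{i\lambda(|x-y|+|y-z|)}}{|x-y|\,|y-z|}\, dy,
\end{equation*}
and the $L^1 \to L^1$ operator norm equals $\sup_z \int|K_\lambda(x,z)|\,dx$. Because $W(x)W(y)$ restricts the $x$ and $y$ integrations to a compact set, only the phase $\phi_{x,z}(y) = |x-y|+|y-z|$ can supply decay in $|\lambda|$. I would extract this decay by stationary phase analysis: the gradient $\nabla_y \phi_{x,z}(y) = \widehat{y-x} + \widehat{y-z}$ vanishes exactly on the closed segment $[x,z]$, and the Hessian is nondegenerate in the two directions transverse to this segment. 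A partition of unity splits the $y$-integral into a non-stationary piece, where one integration by parts produces a factor $|\lambda|^{-1}$ (the derivative hitting the smooth weight $W(y)|x-y|^{-1}|y-z|^{-1}$ away from its singularities), and a thin tubular piece around $[x,z]$ where the two transverse Gaussian integrations also yield a factor $|\lambda|^{-1}$. The singular loci $y=x$ and $y=z$ are handled separately via the local integrability of $|y-x|^{-1}|y-z|^{-1}$ combined with a rougher $|\lambda|^{-\alpha}$ gain from the same phase.

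The main obstacle is that the stationary set is a whole segment rather than an isolated point, and $z$ ranges over all of $\R^3$, so the length $|x-z|$ is unbounded. I would handle this by splitting into the regimes $|z|\lesssim 1$ and $|z|\gg 1$. In the former, $K_\lambda(\cdot,z)$ is supported in a fixed bounded set and the stationary phase bound applied uniformly gives $\int |K_\lambda(x,z)|\,dx \le C_W |\lambda|^{-\alpha}$. In the latter regime, $|y-z| \approx |z|$ throughout $\mathrm{supp}(W)$, so $\phi_{x,z}(y) = |x-y| + |z| - \widehat{z}\cdot y + O(|z|^{-1})$ and the inner integral effectively collapses to a one-dimensional oscillatory integral along $\widehat{z}$, to which a standard van der Corput bound applies. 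Combining the two regimes yields $\sup_z \int |K_\lambda(x,z)|\,dx \le C_W |\lambda|^{-\alpha}$ for all large $|\lambda|$, which is the required decay.
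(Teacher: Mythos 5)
Your density reduction is sound, but it trades the paper's short argument for a substantially harder oscillatory-integral problem that you outline rather than carry out, and the outline glosses over the genuinely delicate points. The paper's own proof is essentially a two-step interpolation: it cites the Kenig--Ruiz--Sogge bound $\|\Res(\lambda^2)\|_{L^{4/3}\to L^4}\les|\lambda|^{-1/2}$, interpolates with the elementary kernel bounds of Proposition~\ref{prop:continuity} to get $\Res(\lambda^2):L^{1+\eps}\to L^{3+\eps'}$ with norm $O(|\lambda|^{-\eps})$, and then factors $(V\Res(\lambda^2))^2$ as $L^1\to L^{1+\eps}\to L^1$, with the $|\lambda|^{-\eps}$ decay coming from the second leg. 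No mollification of $V$, no explicit kernel of the iterated operator, no stationary phase. By contrast, your plan must contend with a phase whose stationary set is the whole segment $[x,z]$, and two features go unaddressed: (i) the amplitude $|x-y|^{-1}|y-z|^{-1}$ is singular exactly at the endpoints of that critical manifold, where the transverse Hessian of $|x-y|+|y-z|$ also degenerates (eigenvalues $\sim|y-x|^{-1}+|y-z|^{-1}$ blow up), so the textbook nondegenerate stationary-phase lemma does not apply and a hands-on computation near $y=x$ is required; (ii) for $|x-z|$ large the transverse Hessian eigenvalue shrinks like $1/|x-z|$, so ``two transverse Gaussian integrations yield $|\lambda|^{-1}$'' is false as stated --- the gain per transverse direction is $\sim(|x-z|/|\lambda|)^{1/2}$, and one has to use the $|y-z|^{-1}\sim|z|^{-1}$ amplitude factor to compensate. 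Your treatment of the far regime by expanding $|y-z|=|z|-\hat z\cdot y+O(|z|^{-1})$ also leaves an uncontrolled phase error $\lambda\cdot O(|z|^{-1})$ unless $|z|\gg\lambda$; you should work with the exact phase. None of these obstacles seems fatal, and the conclusion $\sup_z\int|K_\lambda(x,z)|\,dx\to0$ is true, but as written you have reduced to a claim harder than the proposition itself. If you want to keep the $C_c^\infty$ reduction, a cleaner finish is available: write $W=W_1W_2$ with $W_1,W_2\in C_c^\infty$, factor $(W\Res(\lambda^2))^2=W_1\,(W_2\Res(\lambda^2)W_1)\,(W_2\Res(\lambda^2))$, and compose $L^1\xrightarrow{W_2\Res,\,O(1)}L^2\xrightarrow{W_2\Res W_1,\,O(|\lambda|^{-1})}L^2\xrightarrow{W_1,\,O(1)}L^1$, using the classical weighted-$L^2$ resolvent decay in the middle; this avoids any analysis of the iterated kernel.
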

\begin{proof}
There are numerous mapping estimates for the free resolvent which incorporate
some degree of decay as $\lambda \to \infty$.  To name one example,
\cite{KeRuSo87} shows that $\Res(\lambda^2)$ is a bounded operator
from $L^\frac43$ to $L^4$ with norm comparable to $|\lambda|^{-\frac12}$.
By interpolation with the elementary kernel-derived bounds above, one concludes
that $\Res(\lambda^2): L^{1+\eps} \to L^{3+\eps'}$ with a polynomial decay
rate determined by scaling.

Putting these pieces together, $V\Res(\lambda^2)$ maps $L^1$ to $L^{1+\eps}$
with unit norm, and it takes $L^{1+\eps}$ back to $L^1$ with an operator
norm controlled by $|\lambda|^{-\eps}$ for some $\eps > 0$.
\end{proof}

In order for its Fourier transform to satisfy an estimate in $L^1$, the
family of operators $(I + V\Res(\lambda^2))^{-1}$ should be uniformly bounded
in norm.  The above proposition suffices to show uniform boundedness for large
$\lambda$, where a Neumann series for the inverse will be convergent.     
The continuity assertion of Proposition~\ref{prop:continuity} then reduces the
problem to one of pointwise existence.

Since each operator $I + V\Res(\lambda^2)$ is a compact perturbation of the
identity, the Fredholm alternative stipulates that inverses must exist except
in the case where $(I + V\Res(\lambda^2))g = 0 $ has a solution $g \in L^1$.
It would follow that $\Psi = \Res(\lambda^2)g$, which belongs to
various spaces such as $L^3_{\rm weak}(\R^3)$ and $\japanese[x]^\sigma L^2$ for
all $\sigma > \frac12$, is a distributional solution of
$(-\Delta + V - \lambda^2)\Psi = 0$.  In the event that $\Psi \not\in L^2$,
we say that $H$ has a {\em resonance} at $\lambda^2$.

It is shown in~\cite{IoJe03} that $H$ can only possess true eigenvalues at
$\lambda^2 \in \R^+$ when $\lambda = 0$.  Furthermore, for real valued
potentials there is a self-adjointness argument~\cite{Ag75} to rule out the
possibility of resonances at any nonzero $\lambda$.  We have adopted
condition~\ref{A2} in Theorem~\ref{thm:dispersive} in order to avoid
embedded resonances even for complex potentials. With the invertability of
$I + V\Res(\lambda^2)$ now ensured for all $\lambda \in \R \setminus \{0\}$,
Propositions~\ref{prop:continuity} and~\ref{prop:decay} lead to the uniform
bound
\begin{equation} \label{eq:UnifInverse}
\sup_{|\lambda| > \lambda_1} \norm[(I+V\Res(\lambda^2))^{-1}][1\to 1]
< \infty
\end{equation}
for every $\lambda_1 > 0$.

Our attention now turns to the eigenvalues that may occur at $\lambda = 0$.
As before, the only obstruction to the invertability of $I + V\Res(0)$
is the existence of nonzero $g \in L^1(\R^3)$ such that
$(I + V\Res(0))g = 0$.  These solutions then generate our space $X_1$
via the relationship
\begin{equation} \label{eq:X_1}
X_1 = \{\Res(0)g: (I + V\Res(0))g = 0, g \in L^1 \}.
\end{equation}
One consequence of the Fredholm Alternative is that the space of admissible
functions $g$ must be finite-dimensional, so $X_1$ has finite dimension as
well.

\begin{proposition} \label{prop:bootstrap}
Let $V \in L^p(\R^3) \cap L^q(\R^3)$, $p < \frac32 < q$, and suppose that
Assumption~\ref{A1} holds.  Then $X_k \in L^1(\R^3) \cap L^\infty(\R^3)$
for each $k \ge 1$.
\end{proposition}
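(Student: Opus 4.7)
The plan is to upgrade the assumed $L^1$ integrability of $X_k$ to $L^\infty$ by bootstrapping along the convolution identity $\Psi = \Res(0)(\Phi - V\Psi)$, where $\Phi := H\Psi$ (so $\Phi \equiv 0$ when $k = 1$ and $\Phi \in X_{k-1}$ otherwise; that this identity actually holds, rather than just up to a harmonic function, uses that both sides vanish at infinity). Since $X_k \subset L^1(\R^3) \cap L^3_{\rm weak}(\R^3)$ by assumption \ref{A1} and the definition of $X_k$, interpolation already gives $\Psi \in L^r(\R^3)$ for every $r \in [1,3)$. H\"older with $V \in L^q$ then yields $V\Psi \in L^{m(r)}$ with $\frac{1}{m(r)} = \frac{1}{q} + \frac{1}{r}$, and provided $m(r) < \frac{3}{2}$, the Hardy--Littlewood--Sobolev inequality promotes $\Psi$ to $L^{m(r)^*}$ with
\[
\frac{1}{m(r)^*} \;=\; \frac{1}{r} \;-\; \Big(\frac{2}{3} - \frac{1}{q}\Big).
\]
The gain $\frac{2}{3} - \frac{1}{q}$ is a fixed positive quantity---this is precisely where $q > \frac{3}{2}$ enters---so iterating produces a strictly increasing sequence of exponents $r_0 < r_1 < \cdots$ which, after finitely many steps, passes the threshold $r = \frac{3q}{2q-3}$ beyond which $m(r) > \frac{3}{2}$.

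Once $r_j$ is large enough that $m(r_j) > \frac{3}{2}$, equivalently $m(r_j)' < 3$, HLS is no longer applicable, but also no longer needed. Writing $h := \Phi - V\Psi$, I would split the convolution
\[
\Res(0)h(x) \;=\; \frac{1}{4\pi}\int \frac{h(y)}{|x-y|}\,dy
\]
at $|x-y| = 1$. The far piece is uniformly bounded by $(4\pi)^{-1}\|h\|_1$, and the near piece by H\"older against $\big\| |\cdot|^{-1}\big\|_{L^{m(r_j)'}(B_1)}\|h\|_{L^{m(r_j)}}$, both finite uniformly in $x$. Here $V\Psi \in L^1$ throughout the iteration by pairing $V \in L^q$ with $\Psi \in L^{q'}$ (available since $\Psi \in L^1 \cap L^{r_j}$ interpolates into $L^{q'}$, using $q' \in [1,3)$), and $\Phi \in L^1$ by hypothesis or induction. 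This delivers $\Psi \in L^\infty$.

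The induction on $k$ is routine: for $\Psi \in X_{k+1}$, the forcing $\Phi \in X_k \subset L^1 \cap L^\infty$ has full range of integrabilities, so the $\Res(0)\Phi$ contribution behaves at least as well as $\Res(0)(V\Psi)$ at every stage of the bootstrap. The main obstacle is really ensuring the bootstrap terminates at an exponent where the split-kernel argument closes; this is exactly what the strict hypothesis $q > \frac{3}{2}$ buys, since the per-iteration gain $\frac{2}{3} - \frac{1}{q}$ would otherwise vanish or become negative, and the iteration would stall or lose ground.
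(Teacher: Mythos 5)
Your proof takes essentially the same route as the paper: the recurrence $\Psi = \Res(0)(\Phi - V\Psi)$ combined with a Sobolev/HLS bootstrap driven by the $q > \tfrac32$ assumption, then induction on $k$ using that $X_{k-1} \subset L^1 \cap L^\infty$. The paper states the bootstrap more telegraphically ("estimating the right-hand side with Sobolev inequalities"), while you spell out the iteration and the split-kernel endgame explicitly; the content is the same.
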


\begin{proof}
The kernel estimate in Proposition~\ref{prop:continuity} immediately
suggests that $X_1$ is contained in $L^3_{\rm weak}(\R^3)$, based on the
{\it a priori} information that $g \in L^1$.  The assumptions on $V$ then
indicate that $V\Psi \in L^r(\R^3)$ for all $1 \le r < 1 + \frac{2q-3}{q+3}$.
Each function $\Psi \in X_1$ satisfies the recurrence relation
\begin{equation*}
\Psi = -\Res(0)V\Psi = \Delta^{-1} V\Psi.
\end{equation*}
A bootstrapping argument, estimating the right-hand side
with Sobolev inequalities and the assumption that $V \in L^q$ for some
$q > \frac32$, proves that $\Psi \in L^\infty$ (and even $W^{2,q}(\R^3)$).
Meanwhile, $\Psi \in L^1$ by direct assertion of~\ref{A1}.

The same approach applies equally well to $X_k$.  By definition 
$\Psi \in X_k$ precisely if there is some element $\Psi_0 \in X_{k-1}$
for which
\begin{equation*}
\Psi = -\Res(0)(V\Psi-\Psi_0) = \Delta^{-1}(V\Psi - \Psi_0).
\end{equation*}
Assuming inductively that $\Psi_0$ belongs to every $L^p(\R^3)$ makes it
possible to place $\Psi \in L^\infty$ by bootstrapping.  The fact that
$X_k \subset L^1$ is again part of the statement~\ref{A1}.
\end{proof}

This leads to a desirable state of affairs: Orthogonal projection away from
the space of eigenfunctions $X_1$ (or generalized eigenfunctions $X$) acts as a
bounded operator on both $L^1(\R^3)$ and $L^\infty(\R^3)$.

\section{The High-Energy and Intermediate-Energy Cases}
\label{sec:highmedenergy}

Theorem~\ref{thm:goal} is in fact a compilation of three statements, since
distinct methods are used to analyze $T^\pm(\lambda)$ depending on the size of
$|\lambda|$.  Separating the different energy regimes is done by means of a 
smooth cutoff $\chi(\lambda)$ that is identically equal to 1 when 
$|\lambda| \le 1$ and vanishes for $|\lambda| \ge 2$. 
We are able to borrow the high-energy and intermediate-energy
results from~\cite{Go06b} with minimal modification; the precise
statements are formulated below.
 
\begin{theorem} \label{thm:highenergy}
Let $V\in L^p(\R^3)\cap L^q(\R^3)$, $p<\frac32 < q$ be a complex-valued
potential. There exists a number $\lambda_1 < \infty$ depending only on
$\norm[V][]$ so that the inequality
\begin{equation} \label{eq:high}
\int_{\R^3}\int_\R \big|
  \big[(1 - \chi(\,\cdot/\,\lambda_1))T^+(\,\cdot\,)\big]^\wedge(\rho)f(x)
 \big|\,d\rho\,dx \les \norm[f][1]
\end{equation}
holds for all $f \in L^1(\R^3)$.  
\end{theorem}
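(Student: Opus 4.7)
The plan is to exploit the smallness afforded by Proposition~\ref{prop:decay} through a Neumann series expansion of $T^+(\lambda) = (I + VR_0^+(\lambda^2))^{-1}$ that converges for $|\lambda|$ above a threshold depending only on $\|V\|$. Writing $M(\lambda) := VR_0^+(\lambda^2)$, I would fix $\lambda_1$ so that $\|M(\lambda)^2\|_{1\to 1} \leq \tfrac12$ on the support of $\eta(\lambda) := 1 - \chi(\lambda/\lambda_1)$. Combined with the uniform bound $\|M(\lambda)\|_{1\to 1} \lesssim \|V\|$ from Proposition~\ref{prop:continuity}, this yields the absolutely convergent operator expansion
\[
\eta(\lambda)\,T^+(\lambda) = \eta(\lambda)\,(I - M(\lambda))\sum_{n \geq 0} M(\lambda)^{2n},
\]
whose partial sums are controlled geometrically in the $L^1 \to L^1$ norm. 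It then suffices to bound the Fourier transform in $\lambda$ of each $\eta\cdot(I-M)\,M^{2n}$ uniformly in $n$, with summable $n$-dependence.

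On the Fourier side, iterating the explicit kernel $\tfrac{e^{i\lambda|x-y|}}{4\pi|x-y|}$ for $R_0^+(\lambda^2)$ shows that
\[
M(\lambda)^n(x_0,x_n) = \int_{(\R^3)^{n-1}} \prod_{j=1}^{n} \frac{V(x_{j-1})\,e^{i\lambda|x_{j-1}-x_j|}}{4\pi|x_{j-1}-x_j|}\, dx_1\cdots dx_{n-1},
\]
so $[\eta\,M^n]^\wedge(\rho)$ is an operator whose kernel is the convolution (in $\rho$) of $\hat{\eta}$ with an iterated weighted spherical-average transform pinned to the condition $\rho = \sum_{j=1}^n |x_{j-1}-x_j|$. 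The target estimate therefore reduces to
\[
\int_\R \big\|[\eta\,M^n]^\wedge(\rho)\big\|_{L^1 \to L^1}\, d\rho \leq C\,\delta^n
\]
for some $\delta < 1$ (with a similar estimate for $(I-M)M^{2n}$); summing the resulting geometric series then produces~\eqref{eq:high}.

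The main obstacle is that Proposition~\ref{prop:decay} only delivers smallness pointwise in $\lambda$, whereas the estimate we need is on the Fourier $L^1$ norm. The argument in~\cite{Go06b} resolves this via a dyadic decomposition in $\rho$ combined with the splitting $V \in L^p \cap L^q$: the $L^p$ control handles short-distance singularities of the spherical kernels, the $L^q$ control handles the contribution from large $|x_{j-1}-x_j|$, and the cutoff $\hat{\eta}$ supplies a rapidly-decaying convolution factor that produces the quantitative gain $\delta^n$ at high energies. Since every operator norm appearing in that argument depends on $V$ only through $\|V\|$, no modification is required to accommodate complex-valued potentials, and I would import the dyadic calculation from~\cite{Go06b} essentially verbatim rather than reproducing its bookkeeping here.
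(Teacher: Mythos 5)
Your proposal follows the same route as the paper: a Neumann series expansion of $T^+(\lambda) = (I + VR_0^+(\lambda^2))^{-1}$, high-energy smallness from Proposition~\ref{prop:decay} to obtain geometric decay of the terms, control of the Fourier transform of each term in $L^1_\rho L^1_x$ using the explicit free resolvent kernel and the $L^p \cap L^q$ splitting, and a final appeal to~\cite{Go06b} for the detailed bookkeeping — with the observation, also made in the paper, that all the bounds depend on $V$ only through $\|V\|$ and so carry over unchanged to complex potentials. The only cosmetic difference is your repackaging of $\sum_m(-M)^m$ as $(I-M)\sum_n M^{2n}$ to use the $\|M^2\|_{1\to1}$ bound directly, which is harmless.
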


The proof is based on the Neumann series expansion $(I + V\Res(\lambda^2))^{-1}
= \sum_m (-V\Res(\lambda^2))^m$.  A finite number of terms at the beginning
are computed directly from the resolvent kernel~\eqref{eq:FreeRes}, and 
for small $V$ it is even possible to control the entire series in this manner
without any further restrictions or assumptions~\cite{RoSc04}.  Otherwise
the high-energy cutoff is needed so that the $m^{\rm th}$ term can be
bounded by $(\lambda_1^{-\eps}\norm[V][])^m$ (estimates such as 
Proposition~\ref{prop:decay} play a large role here), leading to a convergent
geometric series.

We refer the reader to~\cite{Go06b} for the details.  While only
the case of real potentials is considered, the estimates are based solely
on the size of $V$ in $L^p \cap L^q$ and extend to the complex case without
modification.  Within the interval $\lambda \in (\lambda_1, \infty)$, 
convergence of the Neumann series for $(I + V\Res(\lambda^2))^{-1}$ expressly
forbids the existence of embedded eigenvalues or resonances of $H$.

\begin{theorem} \label{thm:medenergy}
Suppose $V$ satisfies the conditions of Theorem~\ref{thm:dispersive},
and fix any $0 < \lambda_1 < \infty$.  Then
\begin{equation} \label{eq:medenergy} 
\int_{\R^3}\int_\R \big|
\big[(\chi(\,\cdot\,/\lambda_1)-\chi(\lambda_1\,\cdot\,)) T^+(\,\cdot\,)
  \big]^\wedge(\rho)f(x)\big|\,d\rho\,dx  \les \norm[f][1]
\end{equation}
holds for all $f \in L^1(\R^3)$. 
\end{theorem}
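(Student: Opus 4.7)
My plan is to adapt the intermediate-energy argument of \cite{Go06b} — which is written for real potentials but depends only on size estimates for $V$ — to the complex-valued setting. Write $\eta(\lambda) := \chi(\lambda/\lambda_1) - \chi(\lambda_1\lambda)$; its support is contained in a fixed compact set $K \subset \R\setminus\{0\}$. On $K$, pointwise invertibility of $I + V\Res(\lambda^2)$ on $L^1(\R^3)$ follows from Proposition~\ref{prop:compactness} (compactness), the theorem's hypothesis that $H$ has no resonances along $[0,\infty)$, and the result from~\cite{IoJe03} excluding embedded positive eigenvalues for $V \in L^{3/2}$. Continuity of $T^+(\lambda)$ in $\lambda$ in operator norm on $L^1$ follows from Proposition~\ref{prop:continuity}. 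Compactness of $K$ then yields $\sup_{\lambda \in K}\norm[T^+(\lambda)][1\to 1] < \infty$.

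To promote this pointwise bound into an $L^1$ bound on the Fourier transform in $\lambda$, the key step is to localize. Cover $K$ by finitely many intervals $I_j = [\mu_j - \delta, \mu_j + \delta]$, where $\delta$ is chosen (using the H\"older continuity from Proposition~\ref{prop:continuity} and the uniform bound on $T^+(\mu_j)$) small enough that
\[
\bignorm[(V\Res(\lambda^2) - V\Res(\mu_j^2))T^+(\mu_j)][1\to 1] \le \tfrac12 \quad \text{for all } \lambda \in I_j.
\]
Then on each $I_j$ the resolvent identity provides a convergent Neumann expansion of $T^+(\lambda)$ around $T^+(\mu_j)$, in which $\lambda$ enters only through products $(V\Res(\lambda^2))^m$. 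Subordinating this to a smooth partition of unity $\{\eta_j\}$ on $K$, the problem reduces to bounding, for each $j$ and each $m \ge 0$, the $L^1_\rho L^1_x$ norm of the Fourier transform in $\lambda$ of $\eta_j(\lambda)(V\Res(\lambda^2))^m$, composed with the $\lambda$-independent factors $T^+(\mu_j)$ from the expansion.

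Each such Fourier transform can be computed explicitly from the kernel $(4\pi|x|)^{-1}e^{i\lambda|x|}$ of $\Res(\lambda^2)$: it produces an $m$-fold iterated integral against translates of $\widehat{\eta_j}$ in the distance variables, weighted by products of $|V|$ and inverse separations. Bounding these in $L^1_\rho L^1_x$ reduces, after integrating $\rho$ out first, to iterated use of the Kato-type estimate
\[
\sup_y \int_{\R^3} \frac{|V(x)|}{|x-y|}\,dx \les \norm[V][],
\]
which holds precisely because the near-singular part at $y$ is controlled by the $L^q$ norm of $V$ and the tail by the $L^p$ norm, with $p < \tfrac32 < q$. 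All constants are polynomial in $\norm[V][]$, and the geometric decay in $m$ coming from the $\tfrac12$ bound above ensures summability.

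The principal obstacle is ensuring that the radius $\delta$ can be chosen uniformly over $j$, so that convergence of the Neumann series and the resulting bounds on tail terms are uniform across $K$. This hinges on the strictly positive H\"older exponent $1 - 3/p'$ from Proposition~\ref{prop:continuity}, which is guaranteed by $p < \tfrac32$, together with the uniform bound on $\sup_{\lambda \in K}\norm[T^+(\lambda)][1\to 1]$ already established in the first step. Once this bookkeeping is set up, the remaining calculations are essentially those of \cite{Go06b}; the one substantive replacement is that the self-adjointness-based exclusion of real positive resonances used there is supplanted here by the theorem's standing hypothesis that no such resonances exist, which is the only place where reality of $V$ entered the original argument.
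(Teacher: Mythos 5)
Your setup (partition of the intermediate spectral interval, local Neumann expansion around benchmark energies $\mu_j$, existence of $T^+(\mu_j)$ via Fredholm theory plus the hypotheses excluding positive resonances and eigenvalues, uniform pointwise bound by continuity and compactness) matches the paper's approach. But there is a genuine gap in the step that makes the Neumann series summable in the $L^1_\rho L^1_x$ Fourier-transform norm.

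You expand $T^+(\lambda)$ around $T^+(\mu_j)$ and then say the $m$-th term's Fourier transform reduces to bounding $\eta_j(\lambda)(V\Res(\lambda^2))^m$, and that "the geometric decay in $m$ coming from the $\tfrac12$ bound ensures summability." Two problems. First, the expansion is in powers of $V\big(\Res(\lambda^2)-\Res(\mu_j^2)\big)$, not of $V\Res(\lambda^2)$ — the difference is not cosmetic. Second, and more seriously: the operator-norm bound $\le\tfrac12$ ensures convergence of the series \emph{for each fixed $\lambda$}, but it says nothing about the $L^1_\rho$ norm of the Fourier transform of the $m$-th term. If you compute the $L^1_\rho L^1_x$ norm of the Fourier transform of $\eta_j(\lambda)(V\Res(\lambda^2))^m$ by your own Kato-type estimate, the per-factor constant is of order $\|V\|$ with no smallness in $\delta$, so the $m$-th term is $(C\|V\|)^m$ and the series diverges. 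The operator-norm smallness does not transfer to the Fourier-side $L^1$ norm (if anything, the $L^1_\rho$ norm dominates the sup in $\lambda$, not the other way around).

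What actually produces the decay is keeping the difference kernel $V(x)\tfrac{e^{i(\lambda-\mu_j)|x-y|}-1}{4\pi|x-y|}e^{i\mu_j|x-y|}\chi(\tfrac{\lambda-\mu_j}{\delta})$, Fourier transforming it, and applying a mean-value inequality to the translated copies of $\hat\chi$ to obtain
\[
\int_\R |\hat K(x,y,\rho)|\,d\rho \les |V(x)|\,\min\big(\delta,\,|x-y|^{-1}\big),
\]
followed by interpolating between the $L^p$ and $L^q$ norms of $V$ to convert the $\min$ into $\delta^\eps$ with $\eps=\min(3/p-2,\,2-3/q)>0$. That $\delta^\eps$ is the per-factor constant that makes the $m$-th term decay like $(C_{p,q}\|V\|\delta^\eps)^m$, and then $\delta$ is chosen small enough (uniformly over $j$, using the uniform bound on $\|T^+(\mu_j)\|$) to make the geometric ratio less than $1$. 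Your plain Kato-type inequality loses this $\delta$-smallness, and your operator-norm bookkeeping does not supply it. Once you replace $V\Res(\lambda^2)$ by the difference and run the mean-value/interpolation estimate in place of the Kato bound, the argument closes.
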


This statement is almost identical to Theorem~13 in~\cite{Go06b},
only with a small interval around zero energy removed so as to avoid any
possible eigenvalues there.  We reproduce an summary of the proof because
the more delicate low-energy analysis in Sections~\ref{sec:inverse} and
~\ref{sec:lowenergy} will be
adapted from these calculations.

Existence of $(I+V\Res(\lambda^2))^{-1}$ for each $\lambda$ in this range
is established via the Fredholm Alternative (with~\ref{A1} added by fiat as
a supporting assumption) rather than by power series methods.
This is a black-box procedure in that the outcome
cannot be expressed more directly in terms of $V$ and $\lambda$.  As a result,
the only known relationship between the various operators $T^+(\lambda)$
comes from the continuity statement in Proposition~\ref{prop:continuity}.

With this in mind, we introduce a secondary cutoff that localizes to much
smaller intervals of $\lambda$ where continuity arguments can be applied
successfully. Theorem~\ref{thm:medenergy} is proved by adding up a finite
number of local results.

Fix a ``benchmark'' energy $\lambda_0 \in \R$ and let 
$S_0 = (I+V\Res(\lambda_0^2))^{-1}$.  For all values of $\lambda$ 
sufficiently close to $\lambda_0$, we may regard $\Res(\lambda^2)$ as 
a perturbation of $\Res(\lambda_0^2)$ and treat the inverse of
$I + V\Res(\lambda^2)$ as a perturbation of $S_0$ according to the formula
\begin{equation*}
(I + V\Res(\lambda^2))^{-1} = 
(I + S_0VB_{\lambda_0}^+(\lambda^2))^{-1}S_0
\end{equation*}
where $B_{\lambda_0}^+(\lambda^2)$ represents the difference
$\Res(\lambda^2) - \Res(\lambda_0^2)$.

Let $\chi(\lambda)$ be a smooth cutoff that is identically equal to
1 when $|\lambda| \le 1$ and vanishes for all $|\lambda| \ge 2$.  Suppose
$\eta$ is any smooth function supported in the interval $[\lambda_0 - r,
\lambda_0 + r]$.  The operator inverse of $I + V\Res(\lambda^2)$ has a local
Neumann series expansion
\begin{equation} \label{eq:Neumann}
\eta(\lambda)(I + V\Res(\lambda^2))^{-1} = \eta(\lambda) \sum_{m=0}^\infty
 \big(-S_0VB_{\lambda_0}^+(\lambda^2)
   \chi\big({\txt \frac{\lambda-\lambda_0}{r}}\big)\big)^m S_0.
\end{equation}
Convergence of the sum is guaranteed by Proposition~\ref{prop:continuity}
provided $r$ is sufficiently small.  Note that the role of $S_0$ is limited
to its existence as a (fixed) bounded operator on $L^1(\R^3)$ whose norm is
controlled by~\eqref{eq:UnifInverse}.

In order to take the Fourier transform of~\eqref{eq:Neumann}, the key
step is to estimate the Fourier transform (with respect to $\lambda$) 
of $VB_{\lambda_0}^+(\lambda^2) \chi(r^{-1}(\lambda-\lambda_0))$. 
This is a family of integral operators with kernel
\begin{equation*}
K(x,y,\lambda) = V(x)\frac{e^{i(\lambda-\lambda_0)|x-y|}-1}{4\pi|x-y|}
e^{i\lambda_0|x-y|} \chi\big({\txt \frac{\lambda - \lambda_0}{r}}\big)
\end{equation*}
Its Fourier transform is also a family of integral operators with kernel
\begin{equation*}
\hat{K}(x,y,\rho) = V(x)e^{i\lambda_0(|x-y|-\rho)}
\frac{r\hat{\chi}(r(\rho-|x-y|)) - r\hat{\chi}(r\rho)}{4\pi |x-y|}
\end{equation*}
where $\rho$ is the Fourier variable dual to $\lambda$.  Using the Mean Value
inequality,
\begin{equation*}
\int_\R r\big|\hat{\chi}(r(\rho - |x-y|))- \hat{\chi}(r\rho) \big|\,d\rho
 \le \min (2\norm[\hat{\chi}][1], r|x-y|\norm[\hat{\chi}'][1])
\end{equation*}
therefore
\begin{equation*}
\int_\R |\hat{K}(x,y,\rho)|\,d\rho \les |V(x)|\min(r, |x-y|^{-1})
\end{equation*}
Then we can further integrate with respect to $x$ by taking $V$ in
$L^p \cap L^q$, with the result
\begin{equation}
\sup_{y\in\R^3}\int_{R^4} |\hat{K}(x,y,\rho)|\,dx d\rho \le C_{p,q} r^\eps 
\norm[V][]
\end{equation}
The exponent is $\eps = \min(\frac3p-2, 2-\frac3q) > 0$.  In other words, given
any function $f \in L^1$,
\begin{equation} \label{eq:VBhat}
\int_\R \norm[\hat{K}(\rho)f][1]\, d\rho \le C_{p,q} \norm[V][] r^\eps
\norm[f][1].
\end{equation}
Composing with the bounded operator $S_0$ has no effect except to change
the value of the constant $C_{p,q}$.  Multiplying the original family of
operators by $\eta(\lambda)$ causes a convolution by $\hat{\eta}(\rho)$ on
the Fourier side.  This increases the constants again by a factor of
$\norm[\hat{\eta}][1]$, by the standard $L^1$ convolution arguments.

The Fourier transform of the $m^{\rm th}$ term of the series is subject to the
estimate
\begin{equation*}
\int_\R \norm[\hat{K}_m(\rho)f][1]\, d\rho \le
\big(C_{p,q}\norm[V][]r^{\eps} \big)^m \norm[\hat{\eta}][1] \norm[f][1].
\end{equation*}
So long as $r$ is small enough that $C_{p,q} \norm[V][] r^{\eps} < \frac12$,
the Fourier transform of the Neumann series~\eqref{eq:Neumann} will converge 
as desired.  

We observe that convergence is not influenced by the specific
profile of $\eta$, so any smooth partition of unity for
$[\lambda_1^{-1}, \lambda_1]$ suffices to complete the proof of
Theorem~\ref{thm:medenergy} so long as each component is supported in an
interval of length less than $2r$.

\section{Decomposition and Duality  of $X$ and $\X$} \label{sec:decomp}

In the event that $I + V\Res(0)$ is an invertible operator on $L^1(\R^3)$,
it is possible to choose $\lambda = 0$ as a benchmark energy as well. 
The zero-energy cutoff $\chi(\lambda_1\,\cdot\,)$ in
Theorem~\ref{thm:medenergy} could then be removed without consequence.
This corresponds to the
(frequently invoked) assumption that zero is neither an eigenvalue nor a
resonance of $H$.  In our notation, an equivalent condition is that
$X = \{0\}$.

More generally, $X$ captures the algebraic structure of $H$ over its 
$\lambda = 0$ eigenvalue.  We would like to choose a ``Jordan basis'' for $X$
so that the action of $H$ has a simple representation.  In the process we will
recover numerous duality properties regarding the pairing of functions from
$X$ and $\X$.  The main (perhaps only) ingredient is the
orthogonality relation between the range of a bounded operator and the kernel
of its adjoint.  To avoid problems stemming from the domain of $H$ and $H^k$,
we will only consider the restrictions $H: X\to X$ and $H^*: \X
\to \X$, and formulate all other arguments in terms of resolvents.
The original definition of $X$ in Section~\ref{sec:intro}
was carried out in a similar spirit.

Recall that $X$ is a nested union of spaces $X_1 \subset X_2 \subset \ldots $.
It will be assumed at all times that properties~\ref{A1} and~\ref{A2} are
satisfied.

\begin{proposition}
For each $k \ge 1$, $H: X_{k+1} \to X_k$ and $X \cap H^{-1}X_k = X_{k+1}$. 
The kernel of $H$ (relative to $X$) is $X_1$.  The kernel of $H^k$ relative
to $X$ is therefore $X_k$.
\end{proposition}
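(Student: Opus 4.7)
The plan is to unwind the recursive definition $X_{k+1} = \{\Psi \in L^3_{\rm weak}: H\Psi \in X_k\}$, with the one piece of real content being the identification of $\ker(H|_X)$ with $X_1$, which requires Proposition~\ref{prop:bootstrap}. To begin, $H(X_{k+1}) \subset X_k$ is a direct reading of the recurrence, and the set identity $X \cap H^{-1}X_k = X_{k+1}$ is almost as immediate: one inclusion follows from $H(X_{k+1}) \subset X_k$ together with $X_{k+1} \subset X$, and for the reverse, any $\Psi \in X$ belongs to some $X_j$ and hence automatically sits in $L^3_{\rm weak}$, so the hypothesis $H\Psi \in X_k$ places $\Psi$ into $X_{k+1}$ by definition.

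The substantive step is $\ker(H|_X) = X_1$. The inclusion $X_1 \subset \ker H$ is immediate. For the opposite direction, given $\Psi \in X$ with $H\Psi = 0$, I would invoke Proposition~\ref{prop:bootstrap} to place $\Psi \in L^1 \cap L^\infty$; combined with $V \in L^p \cap L^q$ for $p < \frac32 < q$, H\"older forces $V\Psi \in L^1$. Setting $g := -V\Psi \in L^1$ and using $-\Delta\Psi = -V\Psi = g$ gives $\Psi = \Res(0)g$, while $(I + V\Res(0))g = g + V\Psi = 0$, placing $\Psi$ in $X_1$ via the characterization~\eqref{eq:X_1}. The only subtlety is that $\Psi \in L^3_{\rm weak}$ alone does not deliver $V\Psi \in L^1$; the argument genuinely relies on the bootstrap consequence of assumption~\ref{A1}, and this is the main (modest) obstacle in the proof.

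The final assertion $\ker(H^k|_X) = X_k$ then follows by induction on $k$. Note that $X$ is $H$-invariant by the first part of the proposition, so $H^k$ is well-defined on $X$. For the inductive step, $H^{k+1}\Psi = 0$ for $\Psi \in X$ is equivalent to $H\Psi \in \ker(H^k|_X) = X_k$, which by the second assertion is equivalent to $\Psi \in X \cap H^{-1}X_k = X_{k+1}$, completing the argument.
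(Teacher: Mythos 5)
Your proof is correct and follows essentially the same route as the paper: both arguments hinge on the fact that, once functions in $X$ are known (via Proposition~\ref{prop:bootstrap}) to lie in $L^1 \cap L^\infty$, the products $V\Psi$ are integrable and the operations $-\Delta$ and $\Res(0)$ invert each other on the relevant quantities, so the distributional condition $H\Psi \in X_k$ and the resolvent recurrence $(I + \Res(0)V)\Psi = \Res(0)\Psi_0$ become interchangeable. One step you pass over silently is the deduction of $\Psi = \Res(0)g$ from $-\Delta\Psi = g$: these a priori differ by a harmonic function, and you need a Liouville-type argument (both $\Psi$ and $\Res(0)g$ decay, e.g.\ lie in $L^3_{\rm weak}$) to conclude the difference vanishes; the paper avoids this by taking the resolvent recurrence itself as the working characterization of $X_k$, which has the choice of inverse built in.
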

\begin{proof}
A function $\psi_{k+1}$ belongs to $X_{k+1}$ provided that $\psi_{k+1} \in
L^3_{\rm weak}(\R^3)$ and $(I + \Res(0)V)\psi_{k+1} = \Res(0)\psi_k$ for
some $\psi_k \in X_k$.  Applying the Laplacian to both sides of this
equation yields $H\psi_{k+1} = \psi_k$.  Note that $V\psi_{k+1}$ and
$\psi_k$ are both in $L^1(\R^3)$, so the composition $-\Delta \Res(0) = I$
is valid when applied to them.  Thus functions $\psi \in X$ are
elements of $X_{k+1}$ precisely if $H\psi \in X_k$.  For similar reasons,
elements of $X_1$ have $H\psi = 0$ and these are the only such functions
in $X$.
\end{proof}

Assumption~\ref{A2} implies that in fact $X = X_k$ for some $k < \infty$.
Let $K \ge 0$ be the smallest such $k$ for which this holds.  We have a
finite dimensional vector space $X$ and a nilpotent linear transformation
$H:X \to X$.  There are numerous methods for constructing a basis for $X$
so that $H$ appears in Jordan normal form.  We describe one such construction
here and will further refine it during the discussion of duality later in this
section.

Fix an embedding of $X_1 / HX_2$ into $X_1$ and define this to be $X_{1,1}$.
For each $2 \le k \le K$, let $X_{k,k}$ be an embedding of $X_k/(X_{k-1} +
HX_{k+1})$ into $X_k$.  To complete the triangular array, define
$X_{j,k} = HX_{j+1,k}$ for each $1 \le j < k$.

\begin{proposition}
The following statements are valid for any set of spaces $X_{j,k}$ generated
by the above construction.
\begin{enumerate}
\item $HX \cap X_{k,k} = \{0\}$ for each $k$.
\item $H:X_{j,k} \to X_{j-1,k}$ is one-to-one for each $1 < j \le k$.
\item $X_j/ X_{j-1} \cong \bigoplus_{k=j}^{K} X_{j,k}$ for each 
$2 \le j \le K$, and $X_1 \cong \bigoplus_{k=1}^K X_{1,k}$.
\item $X \cong \bigoplus_{1 \le j \le k \le K}  X_{j,k}$.
\end{enumerate}
\end{proposition}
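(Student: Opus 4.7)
The plan is to prove the four items in the stated order, obtaining what is essentially the Jordan normal form decomposition of the nilpotent operator $H$ acting on the finite-dimensional space $X$. The two algebraic facts I will use repeatedly are the identity $HX \cap X_k = HX_{k+1}$, which follows from the preceding proposition because a function $\psi \in X$ satisfies $H\psi \in X_k$ if and only if $\psi \in X_{k+1}$, and the defining property $X_{k,k} \cap (X_{k-1} + HX_{k+1}) = \{0\}$ for every $k \ge 1$, with the convention $X_0 = \{0\}$.

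Item (1) is almost immediate: any $\psi \in HX \cap X_{k,k}$ lies in $HX \cap X_k = HX_{k+1}$ as well as in $X_{k,k}$, so $\psi \in X_{k,k} \cap HX_{k+1} \subset X_{k,k} \cap (X_{k-1} + HX_{k+1}) = \{0\}$. For item (2), surjectivity of $H : X_{j,k} \to X_{j-1,k}$ is built into the definition $X_{j-1,k} = HX_{j,k}$. For injectivity, I first check that the iterated map $H^{k-j} : X_{k,k} \to X_{j,k}$ is a bijection whenever $1 \le j \le k$: surjectivity unfolds the recursive definition, and if $\varphi \in X_{k,k}$ satisfies $H^{k-j}\varphi = 0$ with $j < k$, then $\varphi$ lies in the kernel of $H^{k-j}$ on $X$, which is $X_{k-j} \subset X_{k-1}$, hence $\varphi \in X_{k,k} \cap X_{k-1} = \{0\}$. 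Given $\psi = H^{k-j}\varphi \in X_{j,k}$ with $H\psi = 0$ and $j \ge 2$, the relation $H^{k-j+1}\varphi = 0$ similarly forces $\varphi = 0$, so $\psi = 0$.

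Item (3) I would prove by descending induction on $n$, from $n = K$ down to $n = 1$. The base case uses $X_{K+1} = X_K$ (by minimality of $K$) and $HX \subset X_{K-1}$, so $X_K = X_{K-1} \oplus X_{K,K}$ and the claim is just the quotient by $X_{K-1}$. For the inductive step, the defining property of $X_{n,n}$ yields the internal direct sum $X_n = (X_{n-1} + HX_{n+1}) \oplus X_{n,n}$. Quotienting by $X_{n-1}$ identifies $(X_{n-1} + HX_{n+1})/X_{n-1}$ with the image of the induced map $H : X_{n+1}/X_n \to X_n/X_{n-1}$, which is injective because $H\psi \in X_{n-1}$ forces $\psi \in X_n$. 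Feeding in the inductive hypothesis $X_{n+1}/X_n \cong \bigoplus_{k \ge n+1} X_{n+1,k}$ and using item (2) to identify $H X_{n+1,k}$ with $X_{n,k}$, the decomposition becomes $X_n/X_{n-1} \cong \bigoplus_{k \ge n+1} X_{n,k} \oplus X_{n,n} = \bigoplus_{k \ge n} X_{n,k}$, realized concretely by the natural map $(\psi_k)_{k \ge n} \mapsto \sum_k \psi_k \bmod X_{n-1}$.

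Item (4) then follows by an ascending induction paired with a dimension count: any $\psi \in X_n$ can be adjusted by an element of $\sum_{k \ge n} X_{n,k}$ (supplied by item (3)) so that the remainder lies in $X_{n-1}$, and the inductive hypothesis handles the rest. Hence the natural map $\bigoplus_{1 \le j \le k \le K} X_{j,k} \to X$ is surjective, and since $\dim X = \sum_n \dim(X_n/X_{n-1}) = \sum_{j \le k \le K} \dim X_{j,k}$ by item (3), it is in fact an isomorphism. I expect the main obstacle to be (3): one has to package the recursive definitions carefully enough that the induced map on successive quotients $X_n/X_{n-1}$ respects the indexing by $k$, and this is exactly what allows the Jordan chains of different lengths to remain separated as they cascade down through the filtration.
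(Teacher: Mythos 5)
Your proof is correct and follows essentially the same route as the paper: item (1) by the identity $HX \cap X_k = HX_{k+1}$ combined with the complementarity of $X_{k,k}$, item (2) via the injectivity of the iterated powers of $H$ on $X_{k,k}$ (the paper states this only for $H^{k-1}$, you for all $H^{k-j}$, but the content is the same), item (3) by descending induction using the second-isomorphism-theorem decomposition of $X_j/X_{j-1}$, and item (4) by assembling the graded pieces. The only cosmetic difference is that you make the dimension count in (4) explicit where the paper simply invokes (3) and $X = X_K$.
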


\begin{proof}
\begin{enumerate}
\item Any element of $X_{k,k} \cap HX$ must
belong to $HX_{k+1}$, which makes it an embedding of the identity element
in $X_k / X_{k-1} + HX_{k+1}$.
\item With the exception of the identity, every function $\psi_{k,k}
\in X_{k,k}$ fails to belong to $X_{k-1} = \ker_X H^{k-1}$.  Thus 
$H^{k-1}: X_{k,k} \to X_{1,k}$ is a one-to-one map.
\item This is easiest to prove inductively, starting with $j=K$.
By definition $X_{K+1} = X_K$,
so $HX_{K+1} \subset X_{K-1}$ and $X_{K,K} \cong X_K / X_{K-1}$.

Now suppose that $j \ge 2$ and 
$X_{j+1}/X_j \cong \bigoplus_{k=j+1}^K X_{j+1,k}$. 
We may write
\begin{align*}
X_j/X_{j-1} &\cong X_j / (X_{j-1} + HX_{j+1}) 
  \oplus HX_{j+1}/(X_{j-1} \cap HX_{j+1})\\
&\cong X_{j,j} \oplus HX_{j+1} / HX_j \\ 
&\cong X_{j,j} \oplus H\Big({\txt \bigoplus_{k=j+1}^K} X_{j+1,k}\Big)
\end{align*}
which proves the desired result because $H$ is a one-to-one map when
applied to $X_{j+1}/X_j$.  The $j=1$ case is proved the same way by adopting
the convention that $X_0 = \{0\}$.
\item Follows immediately from (3) and the fact that $X = X_K$.
\end{enumerate}
\end{proof}

The function spaces $\X_{j,k}$ form an identical decomposition of $\X$,
with $H^*$ taking the place of $H$.  We now turn our attention to the
relationships between $X$ and $\X$ in the setting of the $L^2$ inner product.
It will soon become apparent that $\X$ is a dual space to $X$ (and vice versa)
and the subdivision structure of $X_{j,k}$ is preserved by this identification.
The primary goal of this section is to construct an ``orthonormal'' basis,
as stated in the following lemma.

\begin{lemma} \label{lem:Xbasis}
There exists a basis for $X$ of the form
\begin{equation*}
\{\psi_{j,k}^{(\ell)}: 1 \le j \le k \le K,\, 1 \le \ell \le L_k\}
\end{equation*}
with the following properties.
\begin{enumerate}
\item $\{\psi_{j,k}^{(\ell)}: 1 \le \ell \le L_k\}$ is a basis for $X_{j,k}$.
\item $H\psi_{j,k}^{(\ell)} = \psi_{j-1,k}^{(\ell)}$ for each triple
$(j,k,\ell)$ with $j > 1$, and $H\psi_{1,k}^{(\ell)} = 0$.
\item $\big\la \psi_{j_1,k_1}^{(\ell_1)}, \bar{\psi}_{j_2,k_2}^{(\ell_2)}
\big\ra =
\begin{cases}
 1,\ {\rm if}\ (k_1,\ell_1) = (k_2,\ell_2)\ {\rm and}\ 
j_2 = k+1-j_1.\\
 0,\ {\rm otherwise.}
\end{cases}$
\end{enumerate}
\end{lemma}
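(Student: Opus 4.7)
The plan is to build the $X$-basis $\{\psi_{j,k}^{(\ell)}\}$ together with a parallel basis $\{\bar\psi_{j,k}^{(\ell)}\}$ of $\X$ in two phases: first set up Jordan chains consistent with the decomposition $X = \bigoplus_{1\le j \le k \le K} X_{j,k}$ from the previous proposition, securing properties (1) and (2); then normalize the two bases against each other under the $L^2$ pairing to obtain the biorthogonality condition (3).

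For the first phase, fix for each $k$ an arbitrary basis $\{\psi_{k,k}^{(\ell)}\}_{\ell=1}^{L_k}$ of $X_{k,k}$ with $L_k := \dim X_{k,k}$, and define $\psi_{j,k}^{(\ell)} := H^{k-j}\psi_{k,k}^{(\ell)}$ for $1 \le j < k$. Because the previous proposition shows $H\colon X_{j+1,k}\to X_{j,k}$ is a bijection, the $\{\psi_{j,k}^{(\ell)}\}_\ell$ are bases of $X_{j,k}$, giving (1); condition (2) is immediate by construction. Apply the identical construction in $\X$ with $H^*$ in place of $H$ to produce a preliminary basis $\{\bar\psi_{j,k}^{(\ell)}\}$.

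Two algebraic inputs drive the second phase. First, the intertwining identity $\la H\psi,\bar\phi\ra = \la \psi, H^*\bar\phi\ra$ shows that $\la \psi_{j_1,k_1}^{(\ell_1)}, \bar\psi_{j_2,k_2}^{(\ell_2)}\ra$ depends only on $j_1 + j_2$ within its valid range, and vanishes whenever the shift $(j_1,j_2)\mapsto(j_1+1,j_2-1)$ can be iterated until one side is killed by the chain. A brief case analysis reduces the possibly nonzero pairings to the anti-diagonals $j_1+j_2 \ge 1 + \min(k_1,k_2)$. Second, the $L^2$ pairing $X \times \X \to \Compl$ is non-degenerate: identifying $\Psi = R_0(0)g \in X_1$ with $g \in \ker(I+VR_0(0))\subset L^1$ and likewise on the $\X$ side, one obtains Fredholm duality at the base level; the recursive definitions of $X_k$ and $\X_k$ then lift this to non-degeneracy on the full generalized eigenspaces, and force $H$ and $H^*$ to share Jordan structure, so that $\dim X_{j,k} = \dim \X_{j,k}$ for every $j,k$.

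Condition (3) now follows by a Frobenius-style biorthogonalization, inducting from $k=K$ downward. At each level, first perform a simultaneous change of basis in $\{\psi_{k,k}^{(\ell)}\}$ and $\{\bar\psi_{k,k}^{(\ell)}\}$ to diagonalize and normalize the leading block $\la \psi_{k,k}^{(\ell_1)}, \bar\psi_{1,k}^{(\ell_2)}\ra = \delta_{\ell_1\ell_2}$, which is possible because this block is square of side $L_k$ and invertible by non-degeneracy. Then use the freedom to modify each $\bar\psi_{k,k}^{(\ell)}$ by an element of $\X_{k-1}+H^*\X_{k+1}$ (which preserves its class in $\X_{k,k}$, hence does not disturb (1) for the $\bar\psi$-chain) to clear the remaining pairings on higher diagonals and across distinct chains; each such clearance reduces to a solvable linear system by non-degeneracy. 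Propagation via $\psi_{j,k}^{(\ell)} = H^{k-j}\psi_{k,k}^{(\ell)}$ (and its analog on the $\X$ side) then maintains (1) and (2). The main obstacle is the non-degeneracy of the pairing at every level: Fredholm theory handles level $1$ cleanly, but lifting it up the Jordan filtration, while keeping the duality compatible with the recursive definition of $X_k$ and $\X_k$ adopted in Section~\ref{sec:decomp}, is the heart of the argument; once in hand, what remains is standard finite-dimensional linear algebra.
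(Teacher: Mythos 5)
Your first phase --- fixing arbitrary bases of the top blocks $X_{k,k}$ and propagating by $H^{k-j}$ --- agrees with the paper, and your observation that the intertwining identity makes each pairing invariant under the shift $(j_1,j_2)\mapsto(j_1+1,j_2-1)$ is the engine behind the paper's Proposition~\ref{prop:orthogonality}. (A small slip: working both shift directions gives vanishing whenever $j_1+j_2\le\max(k_1,k_2)$, so the possibly nonzero pairings lie at $j_1+j_2\ge 1+\max(k_1,k_2)$, not $\min$.)

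The second phase has two genuine gaps. First, condition (3) is not plain biorthogonality of two independent bases: the bar in $\bar\psi_{j,k}^{(\ell)}$ is literal complex conjugation, so the ``dual'' basis of $\X$ is completely determined once the $X$-basis is fixed. You therefore cannot perform a ``simultaneous change of basis'' in the two families as if they were free, and you cannot ``modify each $\bar\psi_{k,k}^{(\ell)}$ by an element of $\X_{k-1}+H^*\X_{k+1}$'' without simultaneously perturbing the corresponding $\psi_{k,k}^{(\ell)}$ and with it all of the pairings you just cleared. The paper flags exactly this obstruction in Lemma~\ref{lem:Xduality} (``modifying the initial embedding $\tilde X_{k,k}$ $\ldots$ changes the contents of $\X_{j_2,k}$ as well'') and resolves it with a $\tfrac12$-factor correction and an explicit cross-term cancellation. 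Likewise the terminal normalization is not Gram--Schmidt for a Hermitian form but for the symmetric bilinear form $(\psi_1,\psi_2)\mapsto\int\psi_1\psi_2$, which can vanish on a single vector; the paper's last proposition circumvents this by choosing $\psi^{(1)}=z_0\psi+\bar\vp$ after observing that $\la H^{k-1}(z\psi+\bar\vp),\bar z\bar\psi+\vp\ra$ is a nonconstant polynomial in $z$. Your sketch, which builds a free basis of $\X$ and biorthogonalizes against it, would at best establish the weaker variant that the paper's Remark acknowledges is sufficient downstream, but it does not prove the lemma as stated.

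Second, the non-degeneracy of the pairing $X\times\X\to\Compl$ is asserted rather than proved. ``Fredholm duality at the base level'' together with ``the recursive definitions then lift this'' is a restatement of Lemma~\ref{lem:X_jkduality}, not an argument for it: the induction on $k$ requires correcting an alleged annihilating vector $\psi\in X_{k,k}$ by elements of the lower $X_{j,j}$ before the Fredholm alternative can be invoked to conclude $\psi'\in HX_{k+1}$, and that reduction --- together with keeping the argument compatible with the quotient-space definitions of $X_{k,k}$ --- is where the actual work lives. The appeal to ``$H$ and $H^*$ share Jordan structure'' is likewise delicate: in this infinite-dimensional, weighted setting that equality of Jordan data is a consequence of the duality being established, and cannot be fed in as an independent hypothesis without a separate proof.
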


The third statement asserts a duality relationship between $X$ and $\X$, and at
the level of subspaces it identifies $\X_{k+1-j,k}$ with the dual space
of $X_{j,k}$.
Using the basis produced by Lemma~\ref{lem:Xbasis}, we define the spectral
projection of $H$ onto the eigenspace at zero to be
\begin{equation}
P_0(H) f = \sum_{\substack{1 \le j \le k \le K\\1 \le \ell \le L_k}}
  \la f, \bar{\psi}_{k+1-j,k}^{(\ell)}\ra \psi_{j,k}^{(\ell)}.
\end{equation}
This operator is bounded as a map from $L^p(\R^3)$ to itself for every
$1 \le p \le \infty$, thanks to Proposition~\ref{prop:bootstrap}. 
We list two additional
properties of $P_0(H)$ that follow readily from its definition.

\begin{proposition}
$P_0(H)$ is the unique finite-rank projection whose range is given by $X$
and whose adjoint has range $\X$, therefore it does not depend on the
particular choice of basis $\psi_{j,k}^{(\ell)}$ employed during its
construction.  The operators $H$ and
$P_0(H)$ commute with one another when applied to any function in the domain
of $H$.
\end{proposition}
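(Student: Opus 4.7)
The plan is to verify each claim directly from the biorthogonality in Lemma~\ref{lem:Xbasis}, and then to extract the uniqueness and basis-independence assertions from a standard range/kernel characterization of projections. First I would check $P_0(H)^2 = P_0(H)$: composing $P_0(H)$ with itself introduces an inner pairing $\la \psi_{j,k}^{(\ell)}, \bar\psi_{k'+1-j',k'}^{(\ell')}\ra$, which by item~(3) of Lemma~\ref{lem:Xbasis} vanishes unless $(j,k,\ell) = (j',k',\ell')$, collapsing the double sum back to the single sum defining $P_0(H)f$. The equality $\mathrm{range}\,P_0(H) = X$ is immediate since the $\psi_{j,k}^{(\ell)}$ span $X$, and the mirror computation yields $\mathrm{range}\,P_0(H)^* = \X$.

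For uniqueness I would use the fact that a bounded idempotent is determined by its range and its kernel, together with the duality identity
\begin{equation*}
\ker Q \ =\ \{f : \la Qf,g\ra = 0\ \text{for all}\ g\} \ =\ (\mathrm{range}\,Q^*)^\perp.
\end{equation*}
Any projection $Q$ with range $X$ and adjoint range $\X$ must therefore have kernel equal to the annihilator $\X^\perp$, which equals $\ker P_0(H)$ by the same calculation; hence $Q = P_0(H)$. Basis-independence is an immediate corollary, since any alternative choice of basis in Lemma~\ref{lem:Xbasis} produces a projection with the same range and adjoint range, and must therefore coincide with the original.

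The commutation statement follows once both $X$ and $\X^\perp$ are shown to be $H$-invariant. Invariance of $X$ is built in: $H$ carries $X_k$ into $X_{k-1} \subset X$ by the proposition opening Section~\ref{sec:decomp}. For $\X^\perp$ I would take $f$ in the domain of $H$ with $\la f,\bar\psi\ra = 0$ for every $\bar\psi\in\X$ and compute $\la Hf,\bar\psi\ra = \la f,H^*\bar\psi\ra$; the parallel construction of $\X$ using $H^*$ (recorded just before Lemma~\ref{lem:Xbasis}) yields $H^*\bar\psi\in\X$, so the pairing still vanishes. Thus $H$ preserves the splitting $f = P_0(H)f + (I-P_0(H))f$, giving the commutation $HP_0(H)f = P_0(H)Hf$. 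The main technical point is justifying $\la Hf,\bar\psi\ra = \la f,H^*\bar\psi\ra$ for a general $f$ in the domain of $H$; here Proposition~\ref{prop:bootstrap} places $\bar\psi$ and $H^*\bar\psi$ in $L^1\cap L^\infty$, so the pairings are absolutely convergent, and the identity itself reduces to the standard characterization of the adjoint of $-\Delta + V$ enabled by $V\in L^{3/2}$, as alluded to in Section~\ref{sec:reduction}.
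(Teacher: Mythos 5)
Your proof is correct. The paper in fact supplies no proof of this proposition---it is introduced only by the remark that the two properties ``follow readily from its definition''---so there is no paper argument to compare against, and your write-up fills that gap by the natural route: idempotency from the biorthogonality in item~(3) of Lemma~\ref{lem:Xbasis}, uniqueness from the standard facts that a bounded idempotent is determined by its range and kernel and that $\ker Q = (\mathrm{range}\,Q^*)^\perp$, and commutativity from the $H$-invariance of $X$ and $H^*$-invariance of $\X$. One place worth spelling out slightly more: when you say ``the mirror computation yields $\mathrm{range}\,P_0(H)^* = \X$,'' the adjoint is $P_0(H)^*g = \sum_{j,k,\ell}\la g,\psi_{j,k}^{(\ell)}\ra\,\bar\psi_{k+1-j,k}^{(\ell)}$, whose range is a priori only \emph{contained} in $\X$; equality follows because applying it to $\bar\psi_{k+1-j_0,k_0}^{(\ell_0)}$ returns the same vector, since $\la\bar\psi_{k+1-j_0,k_0}^{(\ell_0)},\psi_{j,k}^{(\ell)}\ra = \overline{\la\psi_{k+1-j_0,k_0}^{(\ell_0)},\bar\psi_{j,k}^{(\ell)}\ra}$ again collapses to a Kronecker delta by Lemma~\ref{lem:Xbasis}(3). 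You also correctly isolate the one genuinely technical point---the adjoint identity $\la Hf,\bar\psi\ra = \la f,H^*\bar\psi\ra$ on the domain of $H$---and the justification you give (regularity of $\bar\psi, H^*\bar\psi \in L^1\cap L^\infty$ from Proposition~\ref{prop:bootstrap}, together with the characterization of $H^*$ afforded by $V\in L^{3/2}$) is the right one.
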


The same constructions can be carried out at any eigenvalue $\lambda_j$
of $H$, with $H - \lambda_j$ replacing $H$ in the statement of
Lemma~\ref{lem:Xbasis}.  Some arguments and definitions are actually made
simpler by the fact that $(-\Delta - \lambda)^{-1}$ is always 
an $L^2$-bounded operator for $\lambda \not\in [0,\infty)$.
To give one example, Condition~\ref{A2} is automatically satisfied for each
nonzero eigenvalue as a result of the Analytic Fredholm Theorem
(e.g.\ Theorem~VI.14 in~\cite{ReSi1}).  Condition~\ref{A1} is also
satisfied because the extended collection of free resolvent estimates allows
for much stronger bootstrapping arguments than were used in 
Proposition~\ref{prop:bootstrap}.

The resulting operators $P_{\lambda_j}(H)$ are precisely the pure-point
spectral restrictions indicated in Theorem~\ref{thm:dispersive}.

Returning to the eigenspace at zero,
we define a limited projection that deals only with $\X_1$ (the
kernel of $H^*$) and its dual, $X_{\rm diag} :=\bigoplus_{k} X_{k,k}$.
\begin{equation}
\tilde{P}_0 f = \sum_{\substack{ 1 \le k \le K\\ 1 \le \ell \le L_k}}
 \la f, \bar{\psi}_{1,k}^{(\ell)}\ra \psi_{k,k}^{(\ell)}
\end{equation}
The range of $\tilde{P}_0$ is $X_{\rm diag}$, which is a (non-canonical)
embedding of $X/HX$ into $X$.  In contrast, The range of $\tilde{P}_0^*$ is
necessarily $\X_1$ regardless of the choice of basis $\psi_{j,k}^{(\ell)}$.
The projection $\tilde{Q}_0 := I - \tilde{P}_0$ is of equal interest.
Its range consists of functions orthogonal to $\X_1$.

The proof of Lemma~\ref{lem:Xbasis} occupies the remainder of this section.
It has been subdivided into four smaller steps that amount to a series of
exercises in linear algebra and functional analysis.  

\begin{remark}
For our purposes it would suffice to construct a basis 
$\psi_{j,k}^{(\ell)}$ for $X$ and a dual basis $\vp_{j,k}^{(\ell)}$ for $\X$,
each satisfying the first two statements of Lemma~\ref{lem:Xbasis}
(or their formulation with $\X$ and
$H^*$, as appropriate).  There is no compelling reason
other than convenience to demand that $\vp_{j,k}^{(\ell)} = 
\bar{\psi}_{j,k}^{(\ell)}$, as we do.  It is therefore unwise to
draw any correspondence between $\psi \in X$ and $\bar{\psi} \in \X$ in the
remaining discussion except where such a connection is explicitly mentioned.
\end{remark}

\begin{proposition} \label{prop:orthogonality}
for $i = 1,2$, choose a pair of numbers $1 \le j_i \le k_i \le K$ such that
$j_1 + j_2 \le \max(k_1, k_2)$.  Then the spaces $X_{j_1,k_1}$ and 
$\X_{j_2,k_2}$ are mutually orthogonal.
\end{proposition}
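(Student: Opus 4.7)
The plan is to reduce the claim to a short computation with adjoints, by writing every element of $X_{j_1,k_1}$ and $\X_{j_2,k_2}$ as a high power of $H$ (respectively $H^*$) applied to the ``top'' pieces $X_{k_1,k_1}$ and $\X_{k_2,k_2}$, and then sliding those operators to whichever side of the pairing kills it.

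First, invoking part~(2) of the preceding proposition together with the recursive definition $X_{j,k}=HX_{j+1,k}$, one sees that $H^{k_1-j_1}\colon X_{k_1,k_1}\to X_{j_1,k_1}$ is a bijection, so every $\psi\in X_{j_1,k_1}$ can be written $\psi=H^{k_1-j_1}\chi$ for some $\chi\in X_{k_1,k_1}\subset X_{k_1}$. The analogous setup on the dual side gives $\vp=(H^*)^{k_2-j_2}\omega$ with $\omega\in\X_{k_2,k_2}\subset\X_{k_2}$ for any $\vp\in\X_{j_2,k_2}$. The crucial fact I will exploit is that $X_{k_1}$ is the kernel of $H^{k_1}$ relative to $X$, so $H^{k_1}\chi=0$, and symmetrically $(H^*)^{k_2}\omega=0$.

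Sliding all of the $H^*$ operators to the left across the pairing (and, in the other direction, all of the $H$ operators to the right) then produces two expressions for the same inner product,
\[
\la \psi,\vp\ra
\;=\; \la H^{k_1-j_1+k_2-j_2}\chi,\omega\ra
\;=\; \la \chi,(H^*)^{k_1-j_1+k_2-j_2}\omega\ra.
\]
The first form vanishes as soon as $k_1-j_1+k_2-j_2\ge k_1$, i.e.\ when $j_1+j_2\le k_2$; the second vanishes when $j_1+j_2\le k_1$. The hypothesis $j_1+j_2\le\max(k_1,k_2)$ says precisely that at least one of these holds, so $\la\psi,\vp\ra=0$ in every case.

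The one place real analysis enters is the adjoint identity $\la Hf,g\ra=\la f,H^*g\ra$ for these particular vectors, and this should be the most substantive check to perform. Fortunately Proposition~\ref{prop:bootstrap} places every element of $X$ and $\X$ in $L^1\cap L^\infty$ and even in $W^{2,q}(\R^3)$, so integration by parts against $-\Lap$ produces no boundary contribution, and $\int V f g$ converges absolutely by H\"older using $V\in L^p\cap L^q$. With this verified, the remainder of the argument is pure index bookkeeping.
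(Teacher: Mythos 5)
Your argument is correct and is essentially the paper's own: express elements of $X_{j_1,k_1}$ as images of $X_{k_1,k_1}$ under a high power of $H$, slide those operators across the $L^2$ pairing, and invoke nilpotency; the only cosmetic difference is that you lift both sides to the top subspaces at once and treat the two cases $j_1+j_2\le k_1$ and $j_1+j_2\le k_2$ symmetrically from a single identity, whereas the paper lifts only one side in each case. Your remark that Proposition~\ref{prop:bootstrap} justifies the adjoint identity $\la Hf,g\ra=\la f,H^*g\ra$ for these vectors is a valid precision that the paper leaves implicit.
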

\begin{proof}
Consider the case where $j_1 + j_2 \le k_1$.  Any function 
$\psi \in X_{j_1,k_1}$ can be expressed as $H^{k_1-j_1} 
\tilde{\psi}$ for some $\psi' \in X_{k_1,k_1}$.
It follows that for every $\vp  \in \X_{j_2,k_2}$,
\begin{equation*}
\la \psi , \vp_{j_2,k_2}\ra =
\la \psi', (H^*)^{k_1-j_1}\vp \ra = \la \tilde{\psi}, 0 \ra.
\end{equation*}
The proof is identical for the case $j_1 + j_2 \le k_2$, with the roles
of $\psi$ and $\vp$ reversed.
\end{proof}

\begin{lemma} \label{lem:X_jkduality}
For each $1 \le j \le k \le K$, the space $\X_{k+1-j,k}$ is naturally
identified with the dual of $X_{j,k}$ via the ordinary $L^2$ pairing
\begin{equation*}
\la \psi, \vp\ra = \int_{\R^3} \psi(x) \bar{\vp}(x)\,dx \qquad
\psi \in X_{j,k}, \vp \in \X_{k+1-j,k}
\end{equation*}
This identification is independent of the particular realization of $X_{j,k}$.
\end{lemma}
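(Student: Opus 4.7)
The plan is to establish the identification via three coordinated steps.

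First, I would verify well-definedness of the pairing $\la \psi, \vp\ra = \int \psi\,\bar\vp$ on $X_{j,k} \times \overline{X}_{k+1-j,k}$. The embedding of $X_{k,k}$ into $X_k$ is determined only up to addition of elements $\xi \in X_{k-1} + HX_{k+1}$. Proposition~\ref{prop:orthogonality} gives $\la X_{k-1}, \overline{X}_{1,k}\ra = 0$, since $(k-1)+1 \le k = \max(k-1,k)$, and for $\xi = H\xi' \in HX_{k+1}$ one has $\la H\xi', \vp\ra = \la \xi', H^*\vp\ra = 0$ because $\overline{X}_{1,k} \subset \ker(H^*|_{\overline{X}})$. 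The symmetric argument treats the realization of $\overline{X}_{k,k}$, and for $j < k$ the space $X_{j,k} = HX_{j+1,k}$ is canonically determined. Dimension matching $\dim X_{j,k} = \dim \overline{X}_{k+1-j,k}$ then follows from noting that complex conjugation provides a bijection $X_k \to \overline{X}_k$ respecting the Jordan decomposition (conjugating $H\psi \in X_{k-1}$ yields $H^*\bar\psi \in \overline{X}_{k-1}$, and one proceeds inductively) together with the isomorphisms $H^{k-j}: X_{k,k} \to X_{j,k}$ and $(H^*)^{k-j}: \overline{X}_{k+1-j,k} \to \overline{X}_{1,k}$ from statement~(2) of the preceding proposition, which make the dimensions constant in the transverse index.

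Second, I would reduce the non-degeneracy question for general $(j,k)$ to a single anchor pairing. Using the same isomorphisms together with the adjoint identity $\la H\psi, \vp\ra = \la \psi, H^*\vp\ra$, valid by integration by parts because $X$ and $\overline{X}$ consist of functions in $L^1 \cap L^\infty$ by Proposition~\ref{prop:bootstrap}, the pairing on $X_{j,k} \times \overline{X}_{k+1-j,k}$ is intertwined with the anchor pairing on $X_{k,k} \times \overline{X}_{1,k}$; one is non-degenerate iff the other is.

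Third, the non-degeneracy of the anchor pairing is the main technical step. My approach is to extract it from the non-degenerate Fredholm duality between generalized kernels of the compact operator $T = -VR_0(0)$ on $L^1$ (Proposition~\ref{prop:compactness}) and its transpose $T^T = -R_0(0)V$ on $L^\infty$. The generalized kernels $N = \bigcup_m \ker(I-T)^m|_{L^1}$ and $N^T = \bigcup_m \ker(I-T^T)^m|_{L^\infty}$ are finite-dimensional of equal dimension, and the bilinear $L^1 \times L^\infty$ pairing $(g,h) \mapsto \int g\,h$ is non-degenerate between them, because the Riesz decomposition $L^1 = N \oplus R$ and $L^\infty = N^T \oplus R^T$ gives $N^T = R^\perp$ and $N = (R^T)^\perp$. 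One transports this non-degeneracy along the base-level identification $g \mapsto R_0(0)g$ (which identifies $\ker(I-T)|_{L^1}$ with $X_1$, while $\ker(I-T^T)|_{L^\infty}$ agrees with $X_1$ directly and is matched to $\overline{X}_1$ via conjugation) and combines it with the block anti-triangular structure that Proposition~\ref{prop:orthogonality} imposes on the pairing matrix of $X$ against $\overline{X}$ to force the non-degeneracy of each anti-diagonal block. The main obstacle I foresee is extending the base-level identification $g \mapsto R_0(0)g$ to higher iterations of the Jordan chain, since $H$ and $(I-T)$ are not conjugate through $R_0(0)$ in a clean way; this will require an inductive construction of the correspondence along each Jordan chain, and that bookkeeping is the technical heart of the argument.
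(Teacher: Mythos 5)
Your Steps 1 and 2 align with the paper's proof: the well-definedness of the pairing with respect to the realization of $X_{k,k}$ is handled the same way (using Proposition~\ref{prop:orthogonality} to kill the cross-terms), and the reduction from general $(j,k)$ to the anchor pairing on $X_{k,k}\times\X_{1,k}$ via the identity $\la H\psi,\vp\ra = \la\psi,H^*\vp\ra$ is exactly equation~\eqref{eq:Htransfer}. Your dimension-count observation (that $\overline{X_m}=\X_m$ as sets, so the Jordan block multiplicities agree) is a valid side remark, though the paper does not need it since non-degeneracy is proved directly.

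The gap is in Step 3, and it is real: you have correctly identified that the map $g\mapsto\Res(0)g$ identifies $\ker(I+V\Res(0))$ with $X_1$, but does \emph{not} intertwine $(I+V\Res(0))^m$ with $H^m$ for $m\ge 2$. Concretely, if $(I+V\Res(0))^2g=0$, set $\Psi=\Res(0)g$; then $H\Psi=(I+V\Res(0))g$ lies in $\ker(I+V\Res(0))=VX_1$, not in $X_1$, so $\Psi\notin X_2$ in general. There is no clean linear bijection between the Riesz generalized kernel of $I-T$ on $L^1$ and $X$ that carries the Jordan filtration across, and your plan defers precisely this bookkeeping to a future ``inductive construction'' without exhibiting it. The paper avoids the problem entirely: it invokes the Fredholm alternative only at the single level of $X_1$ (showing that any $\psi\in X_k$ with $\psi\perp\X_1$ satisfies $\Res(0)\psi\in\text{range}(I+\Res(0)V)$, hence $\psi\in HX_{k+1}$), and then runs an induction on $k$ in which the already-established duality of $X_{j,j}$ with $\X_{1,j}$ for $j<k$ is used to subtract off a correction $\sum_j\psi_j$ so that the base-level argument applies to $\psi'=\psi-\sum_j\psi_j$. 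That induction is the piece your approach is missing; unless you can exhibit the Jordan-chain correspondence explicitly (which is at least as hard as the inductive argument the paper gives), the proof is incomplete at its central step.
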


\begin{proof}
It suffices to consider only the cases where $j=k$.  Whenever $j <k$
we have the identity
\begin{equation} \label{eq:Htransfer}
\la \psi, \vp \ra = \la \psi', (H^*)^{k-j}\vp \ra
\end{equation}
where $\psi' \in X_{k,k}$ is the unique solution of $H^{k-j}\psi' = \psi$.
Recall that $(H*)^{k-j}$ is an invertible map between $\X_{k+1-j,k}$ and
$\X_{1,k}$.

To show that pairings between $X_{k,k}$ and $\X_{1,k}$ 
are independent of their respective realizations, recall that $X_{k,k}$ is an
embedding of the quotient space $X_k / (X_{k-1} +
HX_{k+1})$.  Meanwhile, $\X_{1,k}$ is an embedding of the quotient
$(H^*)^{k-1}\X_k/ (H^*)^k \X_{k+1}$.  Consider the resulting pairing of cosets
\begin{align*}
&\la \psi + X_{k-1} + HX_{k+1}, \vp + (H^*)^k \X_{k+1} \ra \\
&= \la \psi + {\txt \bigoplus\limits_{j < k}} X_{j,k} + 
  {\txt \bigoplus\limits_{k' > k}} X_{k,k'}, \, 
 \vp + {\txt \bigoplus\limits_{k' > k}} \X_{1,k'} \ra \\
&= \la \psi, \vp \ra
\end{align*}
Every one of the possible cross-terms, including those involving
$\psi \in X_{k,k}$ and $\vp \in \X_{1,k}$, vanishes by
Proposition~\ref{prop:orthogonality}.

Duality of $X_{k,k}$ and $\X_{1,k}$ is proved by induction starting with the
base case $k=1$.  We may assume that $X_{1,1}$ is nontrivial.  Since it is
finite dimensional, it suffices to check that no nonzero
element of $X_{1,1}$ is orthogonal to the entirety of $\X_{1,1}$.

Suppose that $\psi \in X_{1,1}$ were orthogonal to $\X_{1,1}$.
Then $\psi$ is orthogonal as well to all of $\X_1 = \bigoplus_k \X_{1,k}$
by Proposition~\ref{prop:orthogonality}.  This suggests that, since
$\psi \perp \ker H^*$, it should belong to the range of $H$.

A more precise argument is:
Any function in $\vp \in \X_1$ satisfies the identity
$\vp= -\R_0^-(0)\overline{V} \vp$.
Thus $\Res(0)\psi \in L^\infty$ must be
orthogonal to the space $\overline{V}\X_1 = \ker (I+\overline{V}R_0^-(0))$,
which is a Fredholm operator on $L^1(\R^3)$.
Then $\Res(0)\psi$ belongs to the range of its adjoint, $I + \Res(0)V$.
By definition that places $\Res(0)\psi \in (I+\Res(0)V)X_2$ and
$\psi \in H X_2$.  The only valid possibility is to have
$\psi = 0 \in X_{1,1}$ because it is an embedding of the identity element
of $X_1 / X_1 \cap HX_2$.

Now suppose that duality between $X_{j,j}$ and $\X_{1,j}$ 
has been established for each $1 \le j < k$, and there exists a function
$\psi \in X_{k,k}$ that is orthogonal to $\X_{1,k}$.  Then $\psi$ is
also orthogonal to $\bigoplus_{k' \ge k} \X_{1,k'}$ and pairing with $\psi$
induces a linear functional on each of $\X_{1,j}$, $1 \le j < k$.
Using the inductive hypothesis, there exists $\psi_{k-1} \in X_{k-1, k-1}$
so that $\psi - \psi_{k-1} \perp \bigoplus_{k' \ge k-1} \X_{1,k'}$.

This construction can be continued until there is a collection of functions
$\psi_j \in X_{j,j}$, $1 \le j < k$, so that $\psi' =\psi - \sum_j \psi_j \perp
\X_1$.  Initially we have $\psi' \in X_k$, but by following the previous
argument (orthogonality to $\ker H^*$) we arrive at the stronger conclusion
$\psi' \in HX_{k+1}$.  This implies $\psi$ itself belongs to
$HX_{k+1} + X_{k-1}$, in which case $\psi = 0$.
\end{proof}

It is already a consequence of Lemma~\ref{lem:X_jkduality} and
Proposition~\ref{prop:orthogonality} that given a nonzero $\psi \in X$,
the pairings $\la\psi, \vp\ra$ cannot vanish for all $\vp \in \X$.  In that
sense $X$ and $\X$ are mutually identified with each other's dual space.
The last bit of work is to generate a decomposition of $X$ into
$\bigoplus_{j,k} X_{j,k}$ that emphasizes the
duality between $X_{j,k}$ and $\X_{k+1-j,k}$.  Once this is done the desired
basis $\psi_{j,k}^{(\ell)}$ can be formed using a variant of the
Gram-Schmidt process.

\begin{lemma} \label{lem:Xduality}
There exists a particular realization of each $X_{k,k}$ so that
the dual space to $X = \bigoplus_{j,k} X_{j,k}$ is naturally identified with
$\bigoplus_{j,k} \X_{k+1-j,k}$ in the sense that $\X_{k+1-j,k}$ is dual to
$X_{j,k}$ and
\begin{equation}
\la \sum_{j,k} \psi_{j,k}, \sum_{j,k}\vp_{j,k}\ra
= \sum_{j,k} \la \psi_{j,k}, \vp_{k+1-j,k}\ra
\end{equation}
for any pairing with $\psi_{j,k} \in X_{j,k}$ and $\vp_{j,k} \in \X_{j,k}$.
\end{lemma}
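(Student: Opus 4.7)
The plan is to refine each realization $X_{k,k}$ within its quotient class $X_k/(X_{k-1}+HX_{k+1})$ so that every pairing $\la\psi,\vp\ra$ with $\psi\in X_{j_1,k_1}$ and $\vp\in\X_{j_2,k_2}$ vanishes outside the ``anti-diagonal'' locus where $k_1=k_2$ and $j_1+j_2=k_1+1$. Starting from any initial realizations, a modification of the basis of $X_{k,k}$ amounts to adding to each basis vector an element of $X_{k-1}+HX_{k+1}$, which (in the initial direct-sum decomposition of $X$) is precisely $\bigoplus_{(j'',k'')\ne(k,k)}X_{j'',k''}$. Every correction can therefore be distributed freely across the remaining summands while preserving the quotient class, and hence the Jordan block structure.

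By Proposition~\ref{prop:orthogonality}, the pairing $\la X_{j_1,k_1},\X_{j_2,k_2}\ra$ already vanishes when $j_1+j_2\le\max(k_1,k_2)$, and the anti-diagonal pairings are non-degenerate by Lemma~\ref{lem:X_jkduality}. What remains is to eliminate the ``off-diagonal'' pairings---those with $j_1+j_2>\max(k_1,k_2)$ that do not lie on the anti-diagonal. For each off-diagonal target $\vp\in\X_{j_2,k_2}$, the natural correction direction is the canonical dual $X_{k_2+1-j_2,k_2}$: it is an admissible summand whenever the target is off-diagonal, and by Lemma~\ref{lem:X_jkduality} it pairs non-degenerately with $\X_{j_2,k_2}$. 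I would solve simultaneously for all corrections across all $k$ as a single finite-dimensional linear system whose unknowns are the components of each correction in each canonical dual summand and whose equations are the cancellation conditions against each off-diagonal target. Once solved, define $X_{j,k}:=H^{k-j}X_{k,k}$ and read off the displayed block-diagonal formula from the resulting orthogonality pattern.

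The main obstacle is invertibility of this linear system. I would order the off-diagonal targets $(j_2,k_2)$ primarily by increasing $j_2$, then secondarily by increasing $k_2$, and verify using Proposition~\ref{prop:orthogonality} that the coefficient matrix is block lower-triangular in $j_2$: a correction in direction $X_{k_2+1-j_2,k_2}$ has zero pairing with every $\vp'\in\X_{j_2',k_2'}$ for which $j_2'<j_2$, regardless of $k_2'$. Within each $j_2$-block, the entries $\la X_{k_2+1-j_2,k_2},\X_{j_2,k_2'}\ra$ vanish unless $k_2'\le k_2$ and are non-degenerate on the diagonal $k_2'=k_2$ (again by Lemma~\ref{lem:X_jkduality}), so the block is itself lower-triangular with invertible diagonal. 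The full system is therefore invertible, producing the required corrections, after which the claimed identification follows from the block structure.
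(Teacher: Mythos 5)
Your outline is close in spirit to the paper's proof — it uses the same two ingredients (Proposition~\ref{prop:orthogonality} to kill most pairings, Lemma~\ref{lem:X_jkduality} for non-degeneracy on the anti-diagonal), the same class of Gram--Schmidt corrections confined to $X_{k-1}+HX_{k+1}$, and the triangularity argument for invertibility is essentially correct. However, there is a genuine gap in the case $k_2 = k$, i.e. when the target $\X_{j_2,k_2}$ lies over the same Jordan block as the $X_{k,k}$ being adjusted. The paper works under the convention $\X_{j,k} = \overline{X_{j,k}}$ (this is what makes the $k_2 > k$ case follow ``immediately by taking complex conjugates,'' and it is what Lemma~\ref{lem:Xbasis}(3) uses downstream). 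Under that convention the target space $\X_{j_2,k}$ is \emph{not} fixed: modifying $X_{k,k}$ by a correction $\alpha$ simultaneously modifies $\X_{j_2,k}=(H^*)^{k-j_2}\overline{X_{k,k}}$. The cancellation condition therefore reads
\begin{equation*}
\la\psi,\vp\ra + \la\alpha(\psi),\vp\ra + \la\psi,\overline{H^{k-j_2}\alpha(\psi')}\ra
+\la\alpha(\psi),\overline{H^{k-j_2}\alpha(\psi')}\ra = 0,
\end{equation*}
which is quadratic in $\alpha$, not linear. The quadratic term vanishes by Proposition~\ref{prop:orthogonality} (it pairs $X_{k+1-j_2,k}$ with $\X_{1,k}$), and the two linear cross-terms are equal by the symmetry of the bilinear form $\int fg$ under $H$, so the correct choice is $\alpha$ with $\la\alpha(\psi),\vp\ra = -\tfrac12\la\psi,\vp\ra$. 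This is exactly the $\tfrac12$-factor that appears in the paper's proof. Your linear system — which treats $\X_{j_2,k_2}$ as a fixed target — would instead solve $\la\alpha(\psi),\vp\ra = -\la\psi,\vp\ra$, overcorrecting by a factor of $2$ and leaving the same-block pairings nonzero.

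If you intend to sidestep this by fixing independent realizations for $\X_{k,k}$ (dropping the conjugation convention), you should say so explicitly; the remark preceding Proposition~\ref{prop:orthogonality} notes this is possible, but then Lemma~\ref{lem:Xbasis} must be restated with a separate dual basis $\vp_{j,k}^{(\ell)}$ rather than $\bar\psi_{j,k}^{(\ell)}$, and you would still need to verify that the resulting coupled linear system (with unknowns on both sides) closes up — which is not automatic and is not argued in your proposal. Either way, the blind application of a linear Gram--Schmidt correction to the $k_2=k$ targets is the missing step.
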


\begin{proof}
Lemma~\ref{lem:X_jkduality} showed the duality of $\X_{k+1-j,k}$ and $X_{j,k}$.
The remaining task is to construct spaces $X_{k,k}$ so that
$X_{j_1,k_1} \perp \X_{j_2,k_2}$ in every case where $k_1 \ne k_2$ or
$j_1 + j_2 \ne k_2 + 1$.  Thanks to~\eqref{eq:Htransfer} it will suffice to
make $X_{k,k}$ orthogonal to each $\X_{j_2,k_2}$, $(j_2,k_2) \not= (1,k)$.  

We first tackle the cases where $k_2 < k$.  For convenience the construction 
is presented here with an induction argument over $k$, though in fact
the steps need not be taken in strict order.  When $k=1$ it is vacuously
true that $X_{1,1} \perp \X_{j_2,k_2}$ with $k_2 < 1$. 

Assume that $X_{j,j}$, $1 \le j < k$, have all been chosen to be orthogonal
to $\X_{j_2,k_2}$, $k_2 < j$.  Given an initial embedding
$\tilde{X}_{k,k} \subset X_k$ of $X_k / (X_{k-1} + HX_{k+1})$,
there is a collection of linear maps
\begin{equation*}
T_{1,j}: \psi \mapsto \la \psi, \,\cdot\,\ra
\end{equation*}
from $\tilde{X}_{k,k}$ to the dual space of $\X_{1,j}$, $j < k$. 
These maps can also
be presented in the form $\psi \in X_{k,k} \mapsto \psi_{1,j} \in X_{j,j}$
by duality of $X_{j,j}$ and $\X_{1,j}$.  Then by
Proposition~\ref{prop:orthogonality} the realization
\begin{equation*}
X_{k,k} = \Big\{\psi - \sum_{j=0}^{k=1} \psi_{1,j}: 
 \psi \in \tilde{X}_{k,k}\Big\}
\end{equation*}
is an embedding of $X_k / (X_{k-1} + HX_{k+1})$ into $X_k$ that is orthogonal
to each of the fixed spaces $\X_{1,j}$, $j < k$.

Now repeat the process with the additional base assumption that
$\tilde{X}_{k,k} \perp \X_{1,j}$ whenever $j < k$.  There is a new collection
of linear maps
\begin{equation*}
T_{2,j}: \psi \mapsto \la \psi, \,\cdot\,\ra
\end{equation*}
from $\tilde{X}_{k,k}$ to the dual space of $\X_{2,j}$, $j < k$.
These maps associate a function $\psi_{2,j} \in X_{j-1,j}$ to each
$\psi \in X_{k,k}$ by Lemma~\ref{lem:X_jkduality}.
Proposition~\ref{prop:orthogonality} then implies that the realization
\begin{equation*}
X_{k,k} = \Big\{\psi - \sum_{j=0}^{k=1} \psi_j: \psi \in \tilde{X}_{k,k}\Big\}
\end{equation*}
is an embedding of $X_k / (X_{k-1} + HX_{k+1})$ into $X_k$ that is orthogonal
to each $\X_{j_2,k_2}$, $j_2 \le 2$, $k_2 < k$.  After $k-1$ iterations
of this procedure, the resulting realization of $X_{k,k}$ will be orthogonal
to every $\X_{j_2,k_2}$ with $k_2 < k$.

The cases $k_2 > k$ follow immediately by taking complex conjugates.
Observe that for any pair $\psi \in X_{k,k}$, $\vp \in \X_{j_2,k_2}$,
\begin{equation*}
\la \psi, \vp\ra = \la H^{k_2-j_2}\psi, \vp'\ra 
= \overline{\la \bar{\vp'}, (H^*)^{k_2-j_2}\bar{\psi} \ra}
\end{equation*}
where $\vp' \in \X_{k_2,k_2}$ is the unique solution to $(H^*)^{k_2-j_2}\vp'
= \vp$.  The right side of this identity pairs $\bar{\vp'} \in X_{k_2,k_2}$
with a function in $\X_{k+j_2-k_2,k}$.  The value must in fact be zero
by construction because $k < k_2$.

Now we need to set $X_{k,k}$ orthogonal to $\X_{j_2,k}$ for all $j_2 > 1$.
The danger here is that modifying the initial embedding $\tilde{X}_{k,k}$ by
an element of $X_{j,k}$ changes the contents of $\X_{j_2,k}$ as well. 
As before, there is a linear map
$T_2: \psi \mapsto \la \psi,\,\cdot\,\ra$
from $\tilde{X}_{k,k}$ to the dual of $\X_{2,k}$, which may then be
identified with $X_{k-1,k}$.  The correct initial adjustment is to set
\begin{equation*}
X_{k,k} = \Big\{\psi - {\txt \frac12}T_2\psi: \psi \in \tilde{X}_{k,k}\Big\}
\end{equation*}
so that the pairing of $X_{k,k}$ with $\X_{2,k}$ looks like
\begin{align*}
\la \psi - {\txt \frac12}&T_2\psi, \,
\vp - {\txt \frac12} \overline{H^{k-2} T_2 (H^{-(k-2)}\bar{\vp})}\ra \\
& 
\begin{aligned}
= \la \psi,\vp\ra -{\txt \frac12}\la T_2\psi,\vp \ra 
 - {\txt \frac12}\la T_2 (&H^{-(k-2)}\bar{\vp}), (H^*)^{k-2}\bar{\psi}\ra \\
&+ {\txt \frac14} \la H^{k-2} T_2 (H^{-(k-2)}\bar{\vp}), \overline{T_2\psi}\ra
\end{aligned}
\\  
&= \la \psi,\vp\ra - {\txt \frac12}\la \psi, \vp\ra 
 -{\txt \frac12} \la H^{-(k-2)}\bar{\vp}, (H^*)^{k-2}\bar{\psi}\ra + 0
\\
&=0
\end{align*}
The last cross-term is always zero by Proposition~\ref{prop:orthogonality}
since it pairs an element of $X_{1,k}$ with an element of $\X_{k-1,k}$.

If we start with the assumption that $\tilde{X}_{k,k} \perp \X_{j,k}$ for all 
$2 \le j  < j_2$, define a map $T_{j_2}: X_{k,k} \to X_{k+1-j_2,k}$
so that $\la T_{j_2}\psi, \vp\ra = \la \psi, \vp\ra$ for all 
$\vp \in \X_{j_2,k}$.  Then the new embedding of $X_{k,k}$ where 
each $\psi\in\tilde{X}_{k,k}$ is replaced with $\psi-\frac12 T_{j_2}\psi$
will be orthogonal to $\X_{j_2,k}$ by following the calculation above.
The pairings of $X_{k,k}$ with any $\X_{j,k}$, $j < j_2$, are unaffected
by the change because each of the cross-terms vanishes under
Proposition~\ref{prop:orthogonality}.
\end{proof}

Any basis of $X_{k,k}$ naturally induces bases for each space $X_{j,k}$ by
taking its image under $H^{k-j}$.  Bases for $\X_{j,k}$, which represent
the dual of $X_{k+1-j,k}$, are obtained through complex conjugation.  
We verify that a Gram-Schmidt process exists for creating a
basis of $X_{k,k}$ that induces its own dual basis in $\X_{1,k}$.

\begin{proposition}
For each $k = 1, 2, \ldots, K$ there exists a basis of functions
$\psi^{(\ell)} \in X_{k,k}$ such that
\begin{equation*}
\la \psi^{(\ell)}, \overline{H^{k-1}\psi^{(m)}}\ra =
\la H^{k-1}\psi^{(\ell)}, \bar{\psi}^{(m)} \ra
 = \delta_{\ell m}
\end{equation*}
\end{proposition}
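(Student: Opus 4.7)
The plan is to reduce the two displayed equalities to a single nondegeneracy statement for a symmetric bilinear form on $X_{k,k}$, and then to invoke the complex analogue of Gram--Schmidt.

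First I would observe that the two inner products are literally equal by the transpose--symmetry of $H = -\Delta + V$ with respect to the bilinear pairing $(f,g) \mapsto \int fg\,dx$: the Laplacian is symmetric under integration by parts (valid on $X_k \subset W^{2,q}(\R^3)$ by Proposition~\ref{prop:bootstrap}), and $V$ as a multiplication operator is trivially symmetric. Iterating gives $\int f \cdot H^{k-1}g\,dx = \int H^{k-1}f \cdot g\,dx$ for all $f,g \in X_k$. So the goal reduces to finding a basis $\{\psi^{(\ell)}\}$ of $X_{k,k}$ such that the symmetric bilinear form
\begin{equation*}
B(\psi,\phi) := \int_{\R^3}\psi(x)\,(H^{k-1}\phi)(x)\,dx
\end{equation*}
satisfies $B(\psi^{(\ell)},\psi^{(m)}) = \delta_{\ell m}$.

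The main step is to show $B$ is nondegenerate. Rewriting $B(\psi,\phi) = \la\psi,\overline{H^{k-1}\phi}\ra$, I would use that $H^{k-1}: X_{k,k} \to X_{1,k}$ is a bijection by the structural propositions of Section~\ref{sec:decomp}, so $\phi \mapsto \overline{H^{k-1}\phi}$ bijectively sends $X_{k,k}$ onto $\overline{X_{1,k}}$. Under the paper's conventions---the notation $\X = \overline{X}$ and the identification $\overline{\X_{k,k}} = X_{k,k}$ employed in the proof of Lemma~\ref{lem:Xduality}---the target space equals $\X_{1,k}$. Lemma~\ref{lem:X_jkduality} then gives the nondegenerate duality $X_{k,k} \times \X_{1,k} \to \Compl$ via $\la\cdot,\cdot\ra$: for any nonzero $\psi \in X_{k,k}$ there exists $\vp \in \X_{1,k}$ with $\la\psi,\vp\ra \ne 0$, and pulling $\bar\vp$ back through the bijection produces $\phi \in X_{k,k}$ with $B(\psi,\phi) \ne 0$.

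With $B$ established as a symmetric nondegenerate bilinear form on the finite--dimensional complex vector space $X_{k,k}$, I would conclude via complex Gram--Schmidt. The polarization identity shows a nonzero symmetric form cannot vanish on every diagonal pair, so there is some $\psi$ with $B(\psi,\psi) \ne 0$; rescaling by a complex square root produces $\psi^{(1)}$ with $B(\psi^{(1)},\psi^{(1)}) = 1$. The $B$--orthogonal complement of $\psi^{(1)}$ is a codimension--one subspace on which $B$ remains nondegenerate, and an induction on $\dim X_{k,k}$ completes the construction.

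The hard part will be the nondegeneracy step, and within it the clean identification $\overline{X_{1,k}} = \X_{1,k}$; transpose--symmetry and the complex Gram--Schmidt finish are routine. That identification relies on the careful realization of $X_{k,k}$ (and its $\X$--counterpart) built in Lemma~\ref{lem:Xduality}, without which a function in $X_{1,k}$ need not have its complex conjugate lying in the intended summand $\X_{1,k}$ of $\X = \bigoplus_{j,k}\X_{j,k}$.
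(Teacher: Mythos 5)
Your proposal is correct and follows essentially the same strategy as the paper's: both reduce the statement to nondegeneracy of the symmetric bilinear form $B(\psi,\phi) = \int H^{k-1}\psi\cdot\phi\,dx$ on $X_{k,k}$ (via Lemma~\ref{lem:X_jkduality} together with the convention that $\X_{j,k}$ is the complex conjugate of $X_{j,k}$) and then normalize with a Gram--Schmidt process. The only difference is presentational: you invoke polarization and the standard diagonalization of nondegenerate symmetric forms over $\Compl$, whereas the paper produces a vector with $B$-self-pairing equal to $1$ directly by solving the complex quadratic $z^2 B(\psi,\psi) + 2z B(\psi,\bar\vp) + B(\bar\vp,\bar\vp) = 1$ in $z$, and it leaves the transpose-symmetry of $H$ (which underlies both the symmetry of $B$ and the equality of the two displayed inner products) implicit rather than stating it as you do.
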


\begin{proof}
Start by choosing any nonzero $\psi \in X_{k,k}$.  
Lemma~\ref{lem:X_jkduality} guarantees the existence of
$\vp \in \X_{k,k}$ so that $\la H^{k-1}\psi, \vp \ra = A \not= 0$.  Taking
linear combinations of $\psi$ and $\bar{\vp}$, we see that 
the value of $\la H^{k-1}(z \psi + \bar{\vp}), \bar{z}\bar{\psi} + \vp\ra$
is a nonconstant linear or quadratic function of the complex variable $z$.
Choose $\psi^{(1)}$ to be an element of the form $z_0\psi + \bar{\vp}$
such that $\la H^{k-1}\psi^{(1)}, \bar{\psi}^{(1)}\ra = 1$.

Now suppose $\psi^{(\ell)}$, $1 \le \ell < L$, have been chosen so that
$\la H^{k-1} \psi^{(\ell)}, \bar{\psi}^{(m)} \ra = \delta_{\ell m}$.
Given a linearly independent element $\tilde{\psi}$, let
\begin{equation*}
\psi = \tilde{\psi} - \sum_{\ell = 1}^{L-1} \la H^{k-1}\tilde{\psi},
\bar{\psi}^{(\ell)}\ra \psi^{(\ell)}.
\end{equation*}
All pairings between $H^{k-1}\psi$ and $\bar{\psi}^{(\ell)}$ vanish by
construction.
Once again there exists $\tilde{\vp} \in \X_{k,k}$ so that
$\la H^{k-1}\psi, \tilde{\vp}\ra = A \not= 0$.  Moreover, it does not change
the value if $\tilde{\vp}$ is replaced by
\begin{equation*}
\vp = \tilde{\vp} - \sum_{\ell=1}^{L-1} \overline{\la H^{k-1}\psi^{(\ell)},
\tilde{\vp}\ra} \bar{\psi}^{(\ell)},
\end{equation*}
the difference being that $\la H^{k-1}\bar{\vp}, \bar{\psi}^{(\ell)}\ra = 0$
as well.
Then it is possible to choose $\psi^{(L)} = z_0\psi + \bar{\vp}$ in the same
manner as before to continue building out the ``self-dual'' basis.
\end{proof}

\section{Construction of $(I+V\Res(\lambda^2))^{-1}$.} \label{sec:inverse}

Even though $I + V\Res(0)$ may not be an invertible operator on $L^1(\R^3)$,
it is still possible to determine the inverse of $I+V\Res(\lambda^2)$
for nearby $\lambda$ by perturbation.  The first step is to use the techniques
from Section~\ref{sec:highmedenergy} to construct an approximate inverse over
a large subspace of $L^1(\R^3)$.

As a compact perturbation of the identity, $I + V\Res(0)$ has a finite
dimensional kernel (namely $VX_1$) and its range is a closed subspace of
$L^1(\R^3)$ of equal codimension.  By the Open Mapping theorem, the exists
a continuous linear map 
\begin{equation*}
S_0: {\rm range}(I+V\Res(0)) \to L^1(\R^3) / \ker(I+V\Res(0))
\end{equation*}
 with the property that
\begin{equation*}
(I + V\Res(0))S_0 = I_{{\rm range}(I +V\Res(0))}.
\end{equation*}
Both the domain and range of $S_0$ can be expressed in terms of the
spaces $X$ and $\X$.  The range of $I+V\Res(0)$ in $L^1$
consists of functions that
vanish when paired with any element of $\X_1 =\ker (I + R_0^-(0)\overline{V})$.
Meanwhile, since $VX_1 = \ker (I+V\Res(0))$ is finite dimensional there exist
subspaces of $L^1(\R^3)$ isomorphic to the quotient $L^1 / VX_1$.  We will
concentrate on one such embedding that is canonical in the context of this
discussion.

Observe that $R_0^-(0)\X_{\rm diag} = \bigoplus_{k} R_0^-(0)\X_{k,k}$
is dual to $VX_1$ in that
\begin{equation*}
\la V\psi, R_0^-(0)\vp \ra = \la \Res(0)V\psi, \vp\ra = -\la\psi, \vp\ra
\end{equation*}
for any $\psi \in X_{1,k}$, $\vp \in \X_{k',k'}$.  Given a function
$g \in L^1(\R^3)$ there is a unique choice of $\psi_k \in X_{1,k}$ so that
$\la g, R_0^-(0)\vp_k\ra = \la V\psi_k, R_0^-(0)\vp_k\ra$ for all
$\vp_k \in \X_{k,k}$.
Then the mapping $g \mapsto g - \sum_k V\psi_k$ has $VX_1$ as its kernel.
Its range is an embedding of $L^1/VX_1$ inside of $L^1(\R^3)$ consisting of
functions orthogonal to $R_0^-(0)\X_{\rm diag}$.

In summary, there exists a linear map $S_0: \X_1^\perp \to
(R_0^-(0)\X_{\rm diag})^\perp$ that is a one-sided inverse to
$I + V\Res(0)$ over the domain $\X_1^\perp$.  Both the domain and range of
$S_0$ are understood to be subspaces of $L^1(\R^3)$.  We express this 
property in the slightly redundant form
\begin{equation*}
\tilde{Q}_0 (I + V\Res(0))S_0 = {\rm Identity\ of\ }
\X_1^\perp \subset L^1(\R^3).
\end{equation*}
The next step is to find a perturbation of $S_0$ that will serve as the
inverse of $\tilde{Q}_0(I + V\Res(\lambda^2))$, taking the latter as a map from
$(R_0^-(0)\X_{\rm diag})^\perp$ to $\X_1^\perp$.  If $\lambda$ is sufficiently
small this can be done via a convergent Neumann series.  Start with the
decomposition
\begin{equation*}
\tilde{Q}_0 (I+V\Res(\lambda^2)) = S_0^{-1} +\tilde{Q}_0 VB_0^+(\lambda^2).
\end{equation*}
From this it follows that
\begin{equation} \label{eq:Slambda}
S(\lambda) = \sum_{m=0}^\infty (-1)^m
  \big(S_0 \tilde{Q}_0 VB_0^+(\lambda^2)\big)^m S_0
\end{equation}
will be the desired inverse of $\tilde{Q}_0(I+V\Res(\lambda^2))$.
Proposition~\ref{prop:continuity} asserts that $VB_0^+(\lambda^2)$ vanishes
as an operator on $L^1(\R^3)$ when $\lambda$ approaches 0, and the same is true
after composing with $\tilde{Q}_0$ and $S_0$.  Thus the series 
in~\eqref{eq:Slambda} converges absolutely for small values of $\lambda$.

Of course $S(\lambda)$ is not generally a true one-sided inverse of
$I + V\Res(\lambda^2)$ over the domain $\X_1^\perp$.  The difference
between $(I+V\Res(\lambda^2))S(\lambda)f$ and $f$ itself will be a linear
combination of the $\psi_{k,k}^{(\ell)}$, with coefficients determined by
its pairings with the corresponding $\bar{\psi}_{1,k}^{(\ell)} \in \X_1$.
If we can identify solutions to 
\begin{equation} \label{eq:inverse}
(I + V\Res(\lambda^2))\Psi_{k}^{(\ell)} = \psi_{k,k}^{(\ell)}
\end{equation}
then the construction of $(I + V\Res(\lambda^2))^{-1}$ will be complete.

In fact there is an explicit formula for each $\Psi_{k}^{(\ell)}$ based on the
defined relationships between the $\psi_{j,k}^{(\ell)}$.
\begin{proposition}
For each $2\le j\le k \le K$ and each $1 \le \ell \le L_k$, there is the
identity
\begin{equation}
(I + \Res(\lambda^2)V)\psi_{j,k}^{(\ell)} =
 \Res(\lambda^2)\big(\psi_{j-1,k}^{(\ell)} - \lambda^2\psi_{j,k}^{(\ell)}\big)
\end{equation}
and $(I + \Res(\lambda^2))\psi_{1,k}^{(\ell)}
= -\lambda^2\Res(\lambda^2)\psi_{1,k}^{(\ell)}$ in the case $j=1$.

The analogous identities for $(I + R_0^-(\lambda^2)\overline{V})
\bar{\psi}_{j,k}^{(\ell)}$ are also valid.
\end{proposition}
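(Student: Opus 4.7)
The plan is to reduce the claim to the $\lambda=0$ analogue, which is already built into the definition of $X_k$, by using the free resolvent identity as a bridge. Throughout I adopt the convention $\psi_{0,k}^{(\ell)} \equiv 0$, so that Lemma~\ref{lem:Xbasis}(2) reads uniformly as $H\psi_{j,k}^{(\ell)} = \psi_{j-1,k}^{(\ell)}$ for all $1 \le j \le k \le K$; this swallows the $j=1$ case automatically.

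First I would extract the $\lambda=0$ version. Rewriting the eigenfunction relation as $(-\Delta)\psi_{j,k}^{(\ell)} = \psi_{j-1,k}^{(\ell)} - V\psi_{j,k}^{(\ell)}$, both terms on the right lie in $L^1(\R^3)$ by Proposition~\ref{prop:bootstrap}, H\"older's inequality, and assumption (A1). Since $\psi_{j,k}^{(\ell)} \in L^3_{\rm weak}(\R^3)$ as well, the recursive definition of $X_k$ from Section~\ref{sec:intro} (which amounts to inverting $-\Delta$ on the weak-$L^3$ class) gives
\begin{equation*}
(I + \Res(0)V)\psi_{j,k}^{(\ell)} = \Res(0)\psi_{j-1,k}^{(\ell)},
\end{equation*}
equivalently $\Res(0)\big(V\psi_{j,k}^{(\ell)} - \psi_{j-1,k}^{(\ell)}\big) = -\psi_{j,k}^{(\ell)}$. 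Next I would invoke the free resolvent identity $\Res(\lambda^2) - \Res(0) = \lambda^2\,\Res(\lambda^2)\Res(0)$, which is immediate from the Fourier-side computation $(|\xi|^2 - \lambda^2 - i0)^{-1} - |\xi|^{-2} = \lambda^2 |\xi|^{-2}(|\xi|^2 - \lambda^2 - i0)^{-1}$ and is valid on $L^1$ inputs. Applying it to $f := V\psi_{j,k}^{(\ell)} - \psi_{j-1,k}^{(\ell)} \in L^1$ and substituting $\Res(0) f = -\psi_{j,k}^{(\ell)}$ into the right-hand side yields, after one line of rearrangement,
\begin{equation*}
(I + \Res(\lambda^2)V)\psi_{j,k}^{(\ell)} = \Res(\lambda^2)\big(\psi_{j-1,k}^{(\ell)} - \lambda^2\psi_{j,k}^{(\ell)}\big),
\end{equation*}
which is the desired identity.

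The main (and essentially only) obstacle is justifying the resolvent identity on the function class at hand: a priori $\Res(0)f$ lies only in $L^{3/2}_{\rm weak}(\R^3)$, so the intermediate term $\Res(\lambda^2)\Res(0)f$ has to be interpreted carefully. In our setting this is harmless, because the explicit value $\Res(0)f = -\psi_{j,k}^{(\ell)}$ lies in $L^1 \cap L^\infty$ by Proposition~\ref{prop:bootstrap}, so $\Res(\lambda^2)$ acts on it without ambiguity. The analogous $R_0^-(\lambda^2)$ statement is obtained by running the same argument starting from $H^*\bar{\psi}_{j,k}^{(\ell)} = \bar{\psi}_{j-1,k}^{(\ell)}$, with $R_0^-(0)$ and $\overline{V}$ replacing $\Res(0)$ and $V$ throughout.
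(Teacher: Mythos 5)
Your argument is correct and rests on the same two ingredients the paper uses: the $\lambda=0$ relation $(I+\Res(0)V)\psi_{j,k}^{(\ell)} = \Res(0)\psi_{j-1,k}^{(\ell)}$ and the free resolvent identity $\Res(\lambda^2) - \Res(0) = \lambda^2\Res(\lambda^2)\Res(0)$ (the paper writes this equivalently as $\Res(\lambda^2) = (I+\lambda^2\Res(\lambda^2))\Res(0)$ and manipulates it at the operator level rather than applying it to the explicit $L^1$ function $f = V\psi_{j,k}^{(\ell)} - \psi_{j-1,k}^{(\ell)}$ as you do). Your added care about which function classes justify each application of the resolvent identity is a welcome touch but does not change the substance of the proof.
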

\begin{proof}
This is a direct consequence of the operator identity 
\begin{equation} \label{eq:ResIdent2}
\Res(\lambda^2) = (I + \lambda^2\Res(\lambda^2))\Res(0).
\end{equation}
As a consequence, $I + \Res(\lambda^2)V$ can be rewritten as
\begin{equation*}
(I + \lambda^2\Res(\lambda^2))(I + \Res(0)V) - \lambda^2\Res(\lambda^2).
\end{equation*}
By definition $(I + \Res(0)V)\psi_{j,k}^{(\ell)} 
= \Res(0)\psi_{j-1,k}^{(\ell)}$.  The proposition is then proved with one
more application of~\eqref{eq:ResIdent2}.
\end{proof}

\begin{corollary} \label{cor:telescope}
For each $1 \le k \le K$ and $1 \le \ell \le L_k$ there is the identity
\begin{equation*}
(I + \Res(\lambda^2)V)\sum_{j=1}^k \lambda^{2(j-1)}\psi_{j,k}^{(\ell)}
= - \lambda^{2k}\Res(\lambda^2)\psi_{k,k}^{(\ell)}.
\end{equation*}
\end{corollary}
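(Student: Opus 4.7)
The plan is to apply the operator $I + \Res(\lambda^2)V$ termwise to the sum, invoke the two cases of the preceding proposition, and then observe that the result telescopes. Specifically, I would first split the sum as the $j=1$ contribution (governed by the second identity of the proposition) plus the contributions $2 \le j \le k$ (governed by the first identity). Applying the proposition, the $j$-th summand for $j \ge 2$ contributes
\begin{equation*}
\lambda^{2(j-1)}\Res(\lambda^2)\psi_{j-1,k}^{(\ell)} - \lambda^{2j}\Res(\lambda^2)\psi_{j,k}^{(\ell)},
\end{equation*}
while the $j=1$ summand contributes $-\lambda^2 \Res(\lambda^2)\psi_{1,k}^{(\ell)}$.

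Next I would reindex the first piece of the $j \ge 2$ sum by setting $j' = j-1$, which rewrites it as $\sum_{j'=1}^{k-1} \lambda^{2j'}\Res(\lambda^2)\psi_{j',k}^{(\ell)}$. Its $j'=1$ term $\lambda^2\Res(\lambda^2)\psi_{1,k}^{(\ell)}$ exactly cancels the standalone $j=1$ contribution. What remains are the two sums
\begin{equation*}
\sum_{j'=2}^{k-1} \lambda^{2j'}\Res(\lambda^2)\psi_{j',k}^{(\ell)} \ - \ \sum_{j=2}^{k} \lambda^{2j}\Res(\lambda^2)\psi_{j,k}^{(\ell)},
\end{equation*}
in which all terms with index between $2$ and $k-1$ cancel pairwise, leaving only the boundary term $-\lambda^{2k}\Res(\lambda^2)\psi_{k,k}^{(\ell)}$.

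There is no real obstacle here; the argument is a routine telescoping calculation, and the preceding proposition provides precisely the two recurrence formulas needed to execute it. The only step requiring minor care is reconciling the boundary cases $j=1$ and $j=k$ so that the cancellations line up, which is handled cleanly by the reindexing above. Compiling these observations produces the stated identity.
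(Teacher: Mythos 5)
Your proof is correct and is precisely the routine telescoping argument the paper has in mind; the corollary is stated without a written proof in the paper, since the two recurrence identities of the preceding proposition make the cancellation evident. Your careful reindexing and boundary-term accounting fill in exactly what a reader would check, and the only nit is that the cancellation of the $j'=1$ term with the standalone $j=1$ contribution implicitly assumes $k\ge 2$ (for $k=1$ the $j\ge 2$ sums are empty and the $j=1$ identity gives the result directly), which is a trivial case you could acknowledge in passing.
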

Once again the ``adjoint'' statement relating $R_0^-(\lambda^2)$, 
$\overline{V}$, and $\bar{\psi}_{j,k}^{(\ell)}$ holds as well.
We state the solution formula for~\eqref{eq:inverse} as a final corollary.

\begin{corollary}
For each $1 \le k \le K$ and $1 \le \ell \le L_k$,
\begin{equation} \label{eq:exactinverse}
(I + V\Res(\lambda^2))\Big(\psi_{k,k}^{(\ell)} + \sum_{j=1}^k
 \lambda^{-2(k+1-j)}V \psi_{j,k}^{(\ell)}\Big) = \psi_{k,k}^{(\ell)}.
\end{equation}
\end{corollary}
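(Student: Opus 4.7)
The plan is to derive the stated identity directly from Corollary~\ref{cor:telescope} by \emph{multiplying through by $V$ on the left} and then rearranging. The key algebraic observation is that $V$ intertwines the two resolvent-type operators: as multiplication by $V$ is just a pointwise operator, one has the operator identity
\begin{equation*}
V(I + \Res(\lambda^2)V) = (I + V\Res(\lambda^2))V
\end{equation*}
whenever both sides make sense on the relevant class of functions. Since the functions $\psi_{j,k}^{(\ell)} \in X$ lie in $L^1 \cap L^\infty$ by Proposition~\ref{prop:bootstrap} and $V \in L^p \cap L^q$, all compositions involved are well-defined on $L^1(\R^3)$.

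Concretely, I would first apply $V$ to both sides of Corollary~\ref{cor:telescope}, obtaining
\begin{equation*}
(I + V\Res(\lambda^2))\sum_{j=1}^k \lambda^{2(j-1)}V\psi_{j,k}^{(\ell)}
= -\lambda^{2k}\,V\Res(\lambda^2)\psi_{k,k}^{(\ell)}.
\end{equation*}
Next, I would rewrite the right-hand side using the trivial identity $V\Res(\lambda^2) = (I + V\Res(\lambda^2)) - I$ applied to $\psi_{k,k}^{(\ell)}$, giving
\begin{equation*}
-\lambda^{2k}V\Res(\lambda^2)\psi_{k,k}^{(\ell)}
= \lambda^{2k}\psi_{k,k}^{(\ell)} - \lambda^{2k}(I + V\Res(\lambda^2))\psi_{k,k}^{(\ell)}.
\end{equation*}
Moving the second term over to the left side and dividing through by $\lambda^{2k}$ then yields the claimed formula~\eqref{eq:exactinverse}, since $2(j-1) - 2k = -2(k+1-j)$.

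This is a purely algebraic manipulation and there is no real obstacle. The only point of care is that the division by $\lambda^{2k}$ produces negative powers of $\lambda$, which is why the formula applies for $\lambda \ne 0$; this is precisely the expected singular behavior of $(I+V\Res(\lambda^2))^{-1}$ corresponding to the threshold eigenvalue, and it is exactly these poles that will later be cancelled when the projection $P_0(H)$ is inserted. An alternative (perhaps more pedestrian) route would be to apply $I + V\Res(\lambda^2)$ directly to the ansatz on the left of~\eqref{eq:exactinverse} and expand, using the definition $H\psi_{j,k}^{(\ell)} = \psi_{j-1,k}^{(\ell)}$ from Lemma~\ref{lem:Xbasis} to produce a telescoping sum; but the route through Corollary~\ref{cor:telescope} is shorter because the telescoping has already been done there.
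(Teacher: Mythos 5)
Your derivation is correct and is exactly the intended (unwritten) argument: the paper places the corollary immediately after Corollary~\ref{cor:telescope} precisely because one obtains it by hitting that identity with $V$ on the left, using $V(I+\Res(\lambda^2)V)=(I+V\Res(\lambda^2))V$, rewriting $-\lambda^{2k}V\Res(\lambda^2)\psi_{k,k}^{(\ell)}$ via $V\Res(\lambda^2)=(I+V\Res(\lambda^2))-I$, and dividing by $\lambda^{2k}$. Your exponent check $2(j-1)-2k=-2(k+1-j)$ is right, and the remark about the poles being the expected threshold singularities matches the discussion following Lemma~\ref{lem:Inverse}.
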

Putting the pieces together, we conclude that for any $f \in L^1(\R^3)$,
\begin{equation} \label{eq:Inverse1}
\begin{aligned}
(I+V\Res&(\lambda^2))^{-1}f \\ 
  = \ S&(\lambda)\tilde{Q}_0f \\ 
+ &\sum_{k,\ell} 
\big(\la f, \bar{\psi}_{1,k}^{(\ell)} \ra  - F_k^{(\ell)}(\lambda) \big)
\Big[ \Big(\sum_{1\le j \le k}
  \lambda^{-2(k+1-j)} V \psi_{j,k}^{(\ell)} \Big) + \psi_{k,k}^{(\ell)}\Big] 
\end{aligned}
\end{equation}
where $F_k^{(\ell)}(\lambda)$ stands for the inner product
$\big\la(I+V\Res(\lambda^2))S(\lambda)\tilde{Q}_0f, 
 \bar{\psi}_{1,k}^{(\ell)} \big\ra$.
The difference between $\la f, \bar{\psi}_{1,k}^{(\ell)}\ra$ and 
$F_k^{(\ell)}(\lambda)$ is always a bounded function of $\lambda$, hence the
operator inverse of $I + V\Res(\lambda^2)$ has an isolated singularity at
$\lambda = 0$.  The highest degree of blowup appears in each of the
$j=1$ terms and is potentially of order $\lambda^{-2k}$.

In fact we can be much more precise about the type of singular behavior of
$(I+V\Res(\lambda^2))^{-1}f$ in the vicinity of the origin. 
The key observation here is that for each $j \ge 2$,
\begin{align*}
\big\la f,\, \bar{\psi}_{j,k}^{(\ell)}\big\ra
&= \big\la \tilde{Q}_0 f, \bar{\psi}_{j,k}^{(\ell)}\big\ra \\
&= \big\la \tilde{Q}_0 (I+V\Res(\lambda^2))S(\lambda)\tilde{Q}_0 f,\,
\bar{\psi}_{j,k}^{(\ell)} \big\ra \\
&= \big\la (I+V\Res(\lambda^2))S(\lambda) \tilde{Q}_0 f,\,
  \bar{\psi}_{j,k}^{(\ell)} \big\ra.
\end{align*}

The middle equation is due to $S(\lambda)$ being the one-sided
inverse of $\tilde{Q}_0(I + V\Res(\lambda^2))$ and the others hold because
the adjoint of $\tilde{Q}_0$ acts as the identity on all functions orthogonal
to $X_{\rm diag}$.  It then follows that
\begin{align*}
F_k^{(\ell)}(\lambda) + &\sum_{2\le j\le k} \lambda^{2(j-1)}
  \big\la f,\, \bar{\psi}_{j,k}^{(\ell)} \big\ra \\
&= \ \Big\la (I + V\Res(\lambda^2))S(\lambda)\tilde{Q}_0 f,
 \sum_{1 \le j \le k} \lambda^{2(j-1)} \bar{\psi}_{j,k}^{(\ell)}
 \Big\ra \\
&= \ \Big\la S(\lambda)\tilde{Q}_0 f,\, (I + R_0^-(\lambda^2)\overline{V})
  \sum_{1 \le j \le k} \lambda^{2(j-1)}\bar{\psi}_{j,k}^{(\ell)}\Big\ra \\
&=\ -\lambda^{2k} \big\la S(\lambda)\tilde{Q}_0 f,\,
  R_0^-(\lambda^2)\bar{\psi}_{k,k}^{(\ell)} \big\ra
\end{align*}
with the last equation being a restatement of Corollary~\ref{cor:telescope}.
Substituting this expression in place of $F_k^{(\ell)}(\lambda)$ 
in~\eqref{eq:Inverse1} yields a formula for $(I+V\Res(\lambda^2))^{-1}$ that
isolates the coefficients of the pole at the origin.

\begin{lemma} \label{lem:Inverse}
Suppose $V$ satisfies the conditions of Theorem~\ref{thm:dispersive}. Then for
all $\lambda \in \R \setminus \{0\}$ sufficiently close to zero,
\begin{equation} \label{eq:Inverse2}
\begin{aligned}
(I+&V\Res(\lambda^2))^{-1}f \\
=\, &S(\lambda) \tilde{Q}_0 f
\\
&+ \sum_{k,\ell} \big\la S(\lambda)\tilde{Q}_0 f,\, R_0^-(\lambda^2)
   \bar\psi_{k,k}^{(\ell)} \big\ra  \Big[
    \Big( \sum_{1\le j \le k} \lambda^{2(j-1)} V\psi_{j,k}^{(\ell)} \Big)
     + \lambda^{2k} \psi_{k,k}^{(\ell)} \Big]
\\
&+ \sum_{k, \ell} \Big(\sum_{1 \le i \le k} \lambda^{2(i-1)} \big\la f,\,
  \bar{\psi}_{i,k}^{(\ell)}\big\ra \Big)
 \Big[ \Big( \sum_{1 \le j \le k} \lambda^{-2(k+1-j)} V\psi_{j,k}^{(\ell)}
          \Big) + \psi_{k,k}^{(\ell)} \Big]
\end{aligned}
\end{equation}
\end{lemma}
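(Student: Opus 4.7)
The plan is straightforward: the algebraic identity that does essentially all the work is already established in the paragraph immediately preceding the statement of the lemma, so the proof reduces to substituting it into formula~\eqref{eq:Inverse1} and rearranging. All the pieces are in place; what remains is bookkeeping.

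First, I would isolate the key identity derived in the text:
\[
F_k^{(\ell)}(\lambda) + \sum_{2\le j\le k}\lambda^{2(j-1)}\la f,\bar{\psi}_{j,k}^{(\ell)}\ra = -\lambda^{2k}\la S(\lambda)\tilde{Q}_0 f,\, R_0^-(\lambda^2)\bar{\psi}_{k,k}^{(\ell)}\ra.
\]
Its derivation rests on three ingredients that I would invoke in order: the fact that $\tilde{Q}_0^*$ fixes each $\bar{\psi}_{j,k}^{(\ell)}$ with $j\ge 2$ because such vectors lie in the annihilator of $X_{\rm diag}$; the right-inverse property $\tilde{Q}_0(I+V\Res(\lambda^2))S(\lambda) = \mathrm{Id}$ on $\X_1^\perp$, which yields $\la f,\bar{\psi}_{j,k}^{(\ell)}\ra = \la (I+V\Res(\lambda^2))S(\lambda)\tilde{Q}_0 f, \bar{\psi}_{j,k}^{(\ell)}\ra$ for every $j\ge 2$; and the adjoint form of Corollary~\ref{cor:telescope}, which collapses $(I+R_0^-(\lambda^2)\overline{V})\sum_{1\le j\le k}\lambda^{2(j-1)}\bar{\psi}_{j,k}^{(\ell)}$ down to $-\lambda^{2k}R_0^-(\lambda^2)\bar{\psi}_{k,k}^{(\ell)}$ after moving $I+V\Res(\lambda^2)$ into the right-hand slot.

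Second, I would solve this identity for $F_k^{(\ell)}(\lambda)$ and substitute into~\eqref{eq:Inverse1}, so that the scalar prefactor splits as
\[
\la f,\bar{\psi}_{1,k}^{(\ell)}\ra - F_k^{(\ell)}(\lambda) = \sum_{1\le i\le k}\lambda^{2(i-1)}\la f,\bar{\psi}_{i,k}^{(\ell)}\ra + \lambda^{2k}\la S(\lambda)\tilde{Q}_0 f, R_0^-(\lambda^2)\bar{\psi}_{k,k}^{(\ell)}\ra.
\]
The first summand, multiplied against the bracket $\sum_{1\le j\le k}\lambda^{-2(k+1-j)}V\psi_{j,k}^{(\ell)} + \psi_{k,k}^{(\ell)}$, is exactly the third line of~\eqref{eq:Inverse2}. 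For the second summand, the elementary exponent identity $\lambda^{2k}\cdot\lambda^{-2(k+1-j)} = \lambda^{2(j-1)}$ converts the same bracket (multiplied by the scalar $\lambda^{2k}$) into $\sum_{1\le j\le k}\lambda^{2(j-1)}V\psi_{j,k}^{(\ell)} + \lambda^{2k}\psi_{k,k}^{(\ell)}$, reproducing the bracket on the second line of~\eqref{eq:Inverse2}.

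The main obstacle here is purely notational: one must verify that the pole of order $\lambda^{-2k}$ in the bracket is exactly cancelled by the factor $\lambda^{2k}$ produced by the telescoping identity, so that the $S(\lambda)\tilde{Q}_0 f$ contribution is in fact regular at the origin and the only genuinely singular terms are held by the third line, with coefficients built from the explicit pairings $\la f, \bar{\psi}_{i,k}^{(\ell)}\ra$. Convergence of $S(\lambda)$ on a punctured neighborhood of zero was already arranged in~\eqref{eq:Slambda} via Proposition~\ref{prop:continuity}, so no additional analytic input is required to complete the substitution.
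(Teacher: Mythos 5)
Your proof is correct and follows the paper's own argument essentially verbatim: it reproduces the identity $F_k^{(\ell)}(\lambda) + \sum_{2\le j\le k}\lambda^{2(j-1)}\la f,\bar{\psi}_{j,k}^{(\ell)}\ra = -\lambda^{2k}\la S(\lambda)\tilde{Q}_0 f,\, R_0^-(\lambda^2)\bar{\psi}_{k,k}^{(\ell)}\ra$ by the same three steps (invariance of $\bar{\psi}_{j,k}^{(\ell)}$ under $\tilde{Q}_0^*$ for $j\ge 2$, the one-sided inverse property of $S(\lambda)$, and the adjoint form of Corollary~\ref{cor:telescope}), then substitutes into~\eqref{eq:Inverse1} and tracks the exponent cancellation $\lambda^{2k}\cdot\lambda^{-2(k+1-j)}=\lambda^{2(j-1)}$. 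This is exactly the paper's derivation.
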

Note that the second line is a bounded function, and poles occur for each term
in the third line that has $i + j < k+2$.

\section{Proof of the Low-Energy Estimate} \label{sec:lowenergy}

We are now in a position to prove Theorem~\ref{thm:goal} in the low-energy
regime by using~\eqref{eq:Inverse2} to characterize of $T^+(\lambda)
= (I + V\Res(\lambda^2))^{-1}$ in terms of known functions.
\begin{theorem} \label{thm:lowenergy}
Suppose $V$ satisfies the conditions of Theorem~\ref{thm:dispersive}.
There exists $r > 0$ such that
\begin{equation} \label{eq:lowenergy}
\int_{\R^3}\int_\R \big|
\big[\chi(\,\cdot\,/r) T^+(\,\cdot\,)\big]^\wedge(\rho)f(x)\big|\,d\rho\,dx 
  \les \norm[f][1]
\end{equation}
holds for all $f \in L^1(\R^3)$ that are orthogonal to
$\X \subset L^\infty(\R^3)$.
\end{theorem}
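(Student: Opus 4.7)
The plan is to use Lemma \ref{lem:Inverse} as the structural starting point and exploit the hypothesis $f\perp\overline{X}$ to annihilate the singular part of the formula, leaving an expression which can be estimated by the Fourier techniques already developed for the intermediate-energy regime.

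Concretely, the orthogonality $f\perp\overline{X}$ means $\langle f,\bar\psi_{i,k}^{(\ell)}\rangle=0$ for every triple $(i,k,\ell)$ in the basis of Lemma \ref{lem:Xbasis}. This immediately kills the entire third line of \eqref{eq:Inverse2}, which is where all the poles of order $\lambda^{-2k}$ live. The same vanishing identity (restricted to $i=1$) also gives $\tilde Q_0 f=f$, since $\tilde P_0 f$ is by construction a linear combination of the $\langle f,\bar\psi_{1,k}^{(\ell)}\rangle\psi_{k,k}^{(\ell)}$. What remains is
\begin{equation*}
T^+(\lambda)f \;=\; S(\lambda)f \;+\; \sum_{k,\ell}\,\bigl\langle S(\lambda)f,\,R_0^-(\lambda^2)\bar\psi_{k,k}^{(\ell)}\bigr\rangle\, E_{k,\ell}(\lambda,\cdot),
\end{equation*}
where $E_{k,\ell}(\lambda,x):=\sum_{j=1}^{k}\lambda^{2(j-1)}V(x)\psi_{j,k}^{(\ell)}(x)+\lambda^{2k}\psi_{k,k}^{(\ell)}(x)$ is a polynomial in $\lambda$ with $L^1_x$ coefficients (using Proposition \ref{prop:bootstrap}). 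So the problem splits into a ``background'' Neumann piece plus finitely many rank-one corrections, both of which are bounded as $\lambda\to 0$.

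For the background piece $\chi(\lambda/r)S(\lambda)f$, I would substitute the Neumann expansion \eqref{eq:Slambda} and rerun the argument of Theorem \ref{thm:medenergy} with benchmark energy $\lambda_0=0$. The only change is that $S_0$ is now a partial inverse on $\overline{X}_1^\perp$ rather than a full inverse; but the orthogonality of $f$ places it automatically in the domain of $S_0$, and $S_0$ is bounded there by the Open Mapping Theorem. Estimate \eqref{eq:VBhat} applies unchanged to $\tilde Q_0 VB_0^+(\lambda^2)\chi(\lambda/r)$, so choosing $r$ small enough that $C_{p,q}\|V\|r^\eps<\tfrac12$ gives a geometric bound for the Fourier transform of each term and hence for the full series.

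The rank-one corrections require more care. Each one factors as a scalar function of $\lambda$ times a fixed element of $L^1_x$ depending polynomially on $\lambda$; after cutoff by $\chi(\lambda/r)$ the polynomial part has Schwartz Fourier transform in $\rho$ with $L^1_x$ coefficients, so by Fourier convolution it suffices to control the scalar $c_{k,\ell}(\lambda):=\chi(\lambda/r)\langle S(\lambda)f,R_0^-(\lambda^2)\bar\psi_{k,k}^{(\ell)}\rangle$ in $L^1_\rho$ after Fourier transform. Rewriting as $\langle f,\,S(\lambda)^{\ast}R_0^-(\lambda^2)\bar\psi_{k,k}^{(\ell)}\rangle$ and expanding $S(\lambda)^{\ast}$ in its own Neumann series reduces this to integrals against kernels of the form $e^{\pm i\lambda|x-y|}/|x-y|$ multiplied by $\chi(\lambda/r)$. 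The elementary identity
\begin{equation*}
\int_\R e^{-i\lambda\rho}\chi(\lambda/r)\,\frac{e^{\pm i\lambda|x-y|}}{4\pi|x-y|}\,d\lambda \;=\; \frac{r\hat\chi(r(\rho\mp|x-y|))}{4\pi|x-y|},
\end{equation*}
already the workhorse of Theorem \ref{thm:medenergy}, yields the needed $L^1_\rho$ estimate uniformly in $(x,y)$ once one integrates against $V$ and against $\bar\psi_{k,k}^{(\ell)}\in L^1\cap L^\infty$.

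The main obstacle I anticipate is Step~4: ensuring that the scalar pairing $c_{k,\ell}(\lambda)$ has its Fourier transform in $L^1_\rho$ with norm $\lesssim\|f\|_1$, rather than merely being bounded in $\lambda$. The saving mechanism is that both $S(\lambda)^{\ast}$ and the free resolvent in the inner product vary with $\lambda$ through exactly the operators controlled by \eqref{eq:VBhat}, so the same $r^\eps$ smallness that tames the background piece also tames the cross terms in the product rule on the Fourier side. Summing the background and rank-one contributions then gives \eqref{eq:lowenergy} for $r>0$ sufficiently small.
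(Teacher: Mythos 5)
Your proposal follows the paper's own proof essentially step for step: invoke Lemma~\ref{lem:Inverse}, use $f\perp\overline{X}$ to annihilate the third (singular) line of \eqref{eq:Inverse2} and to reduce $\tilde Q_0 f$ to $f$, handle the background piece $\chi(\lambda/r)S(\lambda)f$ by rerunning the benchmark-energy Neumann argument at $\lambda_0=0$, and control the scalar coefficients $\langle S(\lambda)f, R_0^-(\lambda^2)\bar\psi_{k,k}^{(\ell)}\rangle$ by the convolution structure of the Fourier transform. The only cosmetic difference is that you propose to move $S(\lambda)^*$ onto $R_0^-(\lambda^2)\bar\psi_{k,k}^{(\ell)}$ and expand it as its own Neumann series, whereas the paper keeps the pairing as stated and applies Young's inequality to the Fourier transforms $K_1\in L^1_{x,\rho}$ and $K_2\in L^\infty_x L^1_\rho$; these are algebraically the same computation.
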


\begin{proof}
This is a statement about the Fourier transform of
$(I + V\Res(\lambda^2))^{-1}$, so it makes sense to consider the Fourier
transform of each term in~\eqref{eq:Inverse2} individually.
The initial term $\chi(\lambda/r)S(\lambda)\tilde{Q}_0 f$ requires us to
retrace the construction of $S(\lambda)$ in~\eqref{eq:Slambda}.
Start with the power series
\begin{equation*}
\chi(\lambda/r)S(\lambda)\tilde{Q}_0 f = \sum_{m=0}^\infty
 (-1)^m\big(\chi(2\lambda/r)S_0\tilde{Q}_0 VB_0^+(\lambda^2)\big)^m
  \chi(\lambda/r)S_0 \tilde{Q}_0 f.
\end{equation*}
Aside from the restricted domain of $S_0$, and its associated
projection $\tilde{Q}_0$, this is the same series as~\eqref{eq:Neumann}
with the value $\lambda_0 = 0$.  The $L^1$ bounds on its Fourier transform
can be computed in an identical manner as well.

For functions $f \in L^1(\R^3)$ orthogonal to $\X$, each of the pairings
$\la f, \bar{\psi}_{j,k}^{(\ell)}\ra$ must be zero. 
Consequently the third line of~\eqref{eq:Inverse2} vanishes, taking all of
the most dangerous terms with it.

Finally, the second line can be controlled in its entirety provided the
Fourier transform (in $\lambda$) of each function
\begin{equation*}
\chi(\lambda/r)\big\la S(\lambda)\tilde{Q}_0 f,\,
   R_0^-(\lambda^2)\bar{\psi}_{k,k}^{(\ell)} \big\ra
\end{equation*}
is integrable, with its $L^1(\R)$ norm bounded by $\norm[f][1]$.  Each side of
the pairing is itself a function of $\lambda$, so its Fourier transform should
appear as the convolution
\begin{equation*}
\int_\R
  \big\la K_1(\,\cdot\,,\sigma)\overline{K_2(\,\cdot\,, \sigma-\rho)} \,d\sigma
= \int_\R \int_\R^3 K_1(x,\sigma) \overline{K_2(x,\sigma-\rho)}\,dx d\sigma
\end{equation*}
where $K_1(x,\rho)$ represents the Fourier transform of
$\chi(\lambda/r)S(\lambda)\tilde{Q}_0 f$ and $K_2(x,\rho)$ is the Fourier
transform of $\chi(2\lambda/r)R_0^-(\lambda^2)\bar{\psi}_{k,k}^{(\ell)}$.
It suffices to show that
\begin{equation}
\norm[K_1][L^1_{x,\rho}] \les \norm[f][1] \qquad {\rm and} \qquad
\norm[K_2][L^\infty_x L^1_\rho] \les 1
\end{equation}
and therefore
\begin{equation*}
\int_\R \Big| \int_\R \int_{\R^3} K_1(x,\sigma) K_2(x,\sigma-\rho)\,dx\,d\sigma
        \Big|\,d\rho \les \norm[f][1].
\end{equation*}

The estimate for $K_1$ has already been established by expanding out the
Neumann series for $S(\lambda)$.  The estimate for $K_2$ is straightforward,
using only the explicit resolvent kernel $R_0^-(\lambda^2){\scr(x,y)} =
e^{-i\lambda|x-y|}/(4\pi|x-y|)$.  This leads to the formula
\begin{equation*}
K_2(x,\rho) = \frac{r}{4} \int_{\R^3}
 \frac{\hat{\chi}(r(\rho+|x-y|)/2)}{|x-y|}\,\bar\psi_{k,k}^{(\ell)}(y)\,dy
\end{equation*}
Taking the $L^1(\R)$ norm in the $\rho$ variable and bring in absolute
values yields
\begin{equation*}
\norm[K_2(x,\,\cdot\,)][1] \le \norm[\hat{\chi}][1] \int_{\R^3}
   \frac{\big|\bar{\psi}_{k,k}^{(\ell)}(y)\big|}{2|x-y|}\,dy \les 1
\end{equation*}
because $\psi_{k,k}^{(\ell)}$ is assumed to belong to
$L^1(\R^3) \cap L^\infty(\R^3)$.
\end{proof}

\begin{remark}
The bounds on $\la S(\lambda)\tilde{Q}_0 f,\ R_0^-(\lambda^2)
\bar{\psi}_{k,k}^{(\ell)}\ra$ can be strengthened further.  Recall that the
range of $S_0$ could be any embedding of $L^1(\R^3)/VX_1$ into $L^1(\R^3)$,
and we chose it to be the subspace of functions orthogonal to 
$R_0^-(0)\X_{\rm diag}$.  By construction this subspace serves as the range
of $S(\lambda)$ as well.

Therefore one should obtain the same results by estimating the Fourier
transform of
$\chi(\lambda/r)\la S(\lambda)\tilde{Q}_0 f, 
(B_0^+(\lambda^2))^* \bar{\psi}_{k,k}^{(\ell)}\ra$.
Following the same method as above, but using the explicit kernel of
$B_0^+(\lambda^2)$ rather than $\Res(\lambda^2)$, it is possible to achieve
the bound $\norm[K_2][L^\infty_x L^1_\rho] \les r$.

This gives some evidence that our definition of $S_0$ leads to the ``best''
possible approximate inverse operator $S(\lambda)$.
\end{remark}

\nocite{BuPlStTa03}
\nocite{Du07}
\nocite{Ka65}
\nocite{LiLo01}
\nocite{Ra78}
\

\bibliographystyle{mrl}
\bibliography{MasterList}

\end{document}